\newtheorem{theorem}[subsection]{Theorem}
\newtheorem{prop}[subsection]{Proposition}
\newtheorem{lemma}[subsection]{Lemma}
\newtheorem{cor}[subsection]{Corollary}
\newtheorem*{theorem*}{Theorem}
\theoremstyle{definition}
\newtheorem{example}[subsection]{Example}
\newtheorem{rmk}[subsection]{Remark}
\definecolor{light-gray}{gray}{.75}
\DeclareMathOperator{\coker}{coker}
\DeclareMathOperator{\im}{im}
\DeclareMathOperator{\cof}{cof}
\newcommand{\M}{\mathbb{M}}
\newcommand{\A}{\mathbb{A}}
\newcommand{\R}{\mathbb{R}}
\newcommand{\Z}{\mathbb{Z}}
\newcommand{\D}{\mathbb{D}}
\renewcommand{\H}{\tilde{H}}
\newcommand{\uZ}{\underline{\Z}}
\renewcommand{\dot}{\small{$\bullet$}}
\newcommand{\cone}[3]{
	\draw[thick, #3] (#1+1/2,5) -- (#1+1/2,#2+1/2) -- (5-#2+#1,5);
	\draw[thick, #3] (#1+1/2,-5) -- (#1 +1/2, #2 -1.5) -- (-3-#2+#1,-5)
}
\newcommand{\zbox}[3]{
\draw[#3,fill=white] (#1+0.65, #2 + 0.65) -- (#1+0.65, #2+0.35) -- (#1+0.35,#2+0.35) -- (#1+0.35, #2+0.65) -- (#1+0.65, #2 + 0.65)
}
\newcommand{\zcone}[3]{
	\draw[thick,dashed,#3] (#1+1/2,#2+1/2) arc (145:218:1.7); 
	\zbox{#1}{#2}{#3};
	\draw[thick, #3] (#1+0.7,#2+0.7)--(5-#2+#1,5);
	\zbox{#1}{#2+2}{#3};
	\draw[thick,#3] (#1+0.7,#2+2.7)--(5-#2-2+#1,5);
	{\ifthenelse{#2<1}{
	\zbox{#1}{#2+4}{#3};
	\draw[thick,#3] (#1+0.7,#2+4.7)--(5-#2-4+#1,5);}{
	}};
	\zbox{#1}{#2-2}{#3};
	\draw[thick,#3] (#1+1/2,#2-2.5) node{{\dot}} --(#1-#2-2,-5);
	\zbox{#1}{#2-4}{#3};
	\draw[thick,#3] (#1-#2,-5) --(#1+1/2,#2-4.5)node{{\dot}};
}
\newcommand{\zanti}[4]{
	\foreach \y in {#2-4,#2-2,#2, #2+2}
	\zbox{#1}{\y}{#3};
	\foreach \y in {#2-4,#2-2,#2, #2+2}
	\draw[thick, #3] (#1+0.6,\y+0.6)--(#1+0.5+#4, \y+0.5+#4) node{{\dot}};
	\foreach \y in {#2+#4-5,#2+#4-3,#2+#4-1, #2+#4+1}
	\zbox{#1+#4}{\y}{#3};
}
\newcommand{\zantizo}[2]{
	\foreach \y in {-4,-2,0,2,4}
	\zbox{#1}{\y}{#2};
	\foreach \y in {-4,-2,0,2}
	\draw[thick, #2] (#1+0.6,\y+0.6)--(#1+1.5, \y+1.5) node{{\small{$\bullet$}}};
	\draw[thick,#2] (#1+.6,4.6)--(#1+1,5);
	\foreach \y in {-4,-2,0,2,4}
	\zbox{#1+1}{\y}{#2};
	\draw[thick,#2] (#1+1,-5)--(#1+1.5,-4.5) node{{\small{$\bullet$}}};
}
\newcommand{\zantioo}[2]{
	\foreach \y in {-5,-3,-1,1,3}
	\zbox{#1}{\y}{#2};
	\foreach \y in {-5,-3,-1,1,3}
	\draw[thick, #2] (#1+0.6,\y+0.6)--(#1+1.5, \y+1.5) node{{\small{$\bullet$}}};
	\foreach \y in {-5,-3,-1,1,3}
	\zbox{#1+1}{\y}{#2};
}
\newcommand{\lab}[3]{
\draw[#3] (#1+1/2,5.5) node{\tiny{$#2$}}
}
\newcommand{\zline}[2]{
	\foreach \y in{-5,...,4}
	\zbox{#1}{\y}{#2};
}
\newcommand{\dotline}[2]{
	\foreach \y in{-5,...,4}
	\draw[#2] (#1+0.5,\y+0.5) node{{\small{$\bullet$}}};
}
\title{The cohomology of $C_2$-surfaces with $\underline{\Z}$-coefficients}
\author[Hazel]{Christy Hazel}
\address[Hazel]{University of California Los Angeles}
\begin{document}
\begin{abstract}
Let $C_2$ denote the cyclic group of order $2$. We compute the $RO(C_2)$-graded cohomology of all $C_2$-surfaces with constant integral coefficients. We show when the action is nonfree, the answer depends only on the genus, the orientability of the underlying surface, the number of isolated fixed points, the number of fixed circles with trivial normal bundles, and the number of fixed circles with nontrivial normal bundles. When the action on the surface is free, we show the answer depends only on the genus, the orientability of the underlying surface, whether or not the action is orientation preserving versus reversing in the orientable case, and one other invariant.
\end{abstract}

\maketitle

\section{Introduction}

When studying spaces with an action of a finite group $G$, Bredon $RO(G)$-graded cohomology plays the role of singular cohomology. Despite this fundamental role, computations in $RO(G)$-graded cohomology are often complicated and mysterious. The goal of this paper is to present a complete family of computations in Bredon $RO(C_2)$-graded cohomology with constant integral coefficients, and to present the answer in a straightforward format that only depends on a few properties of the space and the action.

Specifically, we use Dugger's classification of closed surfaces with an involution \cite{Dug19} to compute the cohomology of all $C_2$-surfaces with $\uZ$-coefficients. We show the answer depends only on a few invariants of the surface and the $C_2$-action. In addition to the surface computations, we prove the existence of a top cohomology class for any closed manifold with a $C_2$-action. We further show this class generates a free submodule when the underlying manifold is orientable. The work in this paper builds on the author's previous work in \cite{Haz20} where the cohomology of all $C_2$-surfaces was computed with $\underline{\Z/2}$-coefficients, and a similar result for a top cohomology class for general $C_2$-manifolds was proven.\smallskip

We start by providing some background in order to give a summary of the main results. Bredon cohomology $H^{\star}(-;\uZ)$ is a cohomology theory graded on $RO(C_2)$, the Grothendieck ring of finite-dimensional, real, orthogonal $C_2$-representations. Any such $C_2$-representation is isomorphic to a direct sum of copies of the one-dimensional trivial representation $\R_{triv}$ and the one-dimensional sign representation $\R_{sgn}$. We use the motivic notation $\R^{p,q}$ for the $p$-dimensional virtual representation $(p-q)[\R_{triv}] + q[\R_{sgn}]$. We can thus regard the cohomology as a bigraded theory and write $H^{p,q}(-;\uZ)$ for $H^{\R^{p,q}}(-;\uZ)$. 

Given a $C_2$-space $X$, we have an equivariant map $X\to pt$ where $pt$ denotes a singleton with a trivial $C_2$-action. This induces a map of bigraded rings $H^{*,*}(pt;\uZ)\to H^{*,*}(X;\uZ)$ making $H^{*,*}(X;\uZ)$ into a module over $H^{*,*}(pt;\uZ)$.

Let $\M=H^{*,*}(pt;\uZ)$. Our aim is to understand the cohomology of $C_2$-spaces as $\M$-modules. We introduce $\M$ in Figure \ref{fig:pointzintro}. Since this is a bigraded ring, we can record properties of this ring on a grid. We plot information about the $(p,q)$ group in the box up and to the right of the $(p,q)$ lattice point. The ring $\M$ is shown on the left. A box denotes a copy of $\Z$, and a dot denotes a copy of $\Z/2$. We have that $H^{p,q}(pt;\uZ)\cong \Z[x,\rho]/(2\rho)$ for $p,q\geq 0$ where $|x|=(0,2)$, $|\rho|=(1,1)$. Diagonal lines indicate multiplication by $\rho$, and dashed vertical lines indicate multiplication by $x$. The portion of the picture for $p,q<0$ is more complicated;
see Section \ref{sec:background} for a complete description of $\M$. 

The right-hand grid in Figure \ref{fig:pointzintro} shows the cohomology of the free orbit $H^{*,*}(C_2;\uZ)$. The $\M$-module $H^{*,*}(C_2;\uZ)$ is isomorphic to a direct sum of two copies of $x^{-1}\M/(\rho)$, one of which is shifted up one. We denote this module by $\A_0$. Again the dashed vertical lines indicate action by $x$.

\begin{figure}[ht]
\begin{tikzpicture}[scale=.5]
\draw[help lines,gray] (-5.125,-5.125) grid (5.125, 5.125);
\draw[<->] (-5,0)--(5,0)node[right]{$p$};
\draw[<->] (0,-5)--(0,5)node[above]{$q$};
	\draw[thick,dashed] (1/2,1/2) to[out=120,in=240](1/2,2.5);
	\draw[thick,dashed] (1/2,2+1/2) to[out=120,in=240](1/2,4.5);
	\draw[thick,dashed] (1+1/2,1+1/2) to[out=120,in=240](1+1/2,3.5);
	\draw[thick,dashed] (1+1/2,3+1/2) to[out=120,in=260](1+1/3,5);
	\draw[thick,dashed] (2+1/2,2+1/2) to[out=120,in=240](2+1/2,4.5);
	\draw[thick,dashed] (3+1/2,3+1/2) to[out=120,in=260](3+1/3,5);
	\draw[thick,dashed] (4+1/2,4+1/2) to[out=120,in=270](4+1/3,5);
	\draw[thick,dashed] (2+1/2,4+1/2) to[out=120,in=270](2+1/3,5);
	\draw[thick,dashed] (1/2,-4+1/2) to[out=120,in=240](1/2,-1.5);
	\draw[thick,dashed] (1/2,-5+1/2) to[out=60,in=300](1/2,-2.5);
	\draw[thick,dashed] (-1/3,-5) to[out=80,in=300](-.5,-3.5);
	\draw[thick,dashed] (-4/3,-5) to[out=100,in=300](-1.5,-4.5);
	\draw[thick,dashed] (.7,-5) to[out=100,in=310](1/2,-4.5);
	\draw[thick,dashed] (1/5,-5) to[out=100,in=240](1/2,-3.5);
	\draw[thick,dashed] (1/2,4+1/2) to[out=120,in=270](1/3,5);
	\foreach \x in {1,2,3,4}
	\draw (\x+.5,\x+.5) node{\small{$\bullet$}};
	\foreach \x in {1,2}
	\draw (\x+.5,\x+2.5) node{\small{$\bullet$}};
	\zcone{0}{0}{black};
	\draw[thick] (.6,-.5) node{\tiny{$\times 2$}};
	\draw[thick] (-.5,-3.5) node{\small{$\bullet$}};
	\draw[thick](-1.5,-4.5)node{\small{$\bullet$}};
	\draw[thick](-1.5,-4.5)--(-2,-5);
	\draw (-.3,0.5) node{$1$};
	\draw (-.3,2.3) node{$x$};
	\draw (-.4,4.35) node{$x^2$};
	\draw (2,1.5) node{$\rho$};
	\draw (3,2.5) node{$\rho^2$};
\end{tikzpicture}\hspace{0.2in}
\begin{tikzpicture}[scale=.5]
\draw[help lines,gray] (-5.125,-5.125) grid (5.125, 5.125);
\draw[<->] (-5,0)--(5,0)node[right]{$p$};
\draw[<->] (0,-5)--(0,5)node[above]{$q$};
	\foreach \y in {-5,-3,-1,1}
		\draw[thick,dashed] (1/2,\y+1/2) to[out=130,in=230](1/2,\y+2.5);
	\foreach \y in {-4,-2,0,2}
		\draw[thick,dashed] (1/2,\y+1/2) to[out=50,in=310](.5,\y+2.5);
	\draw[thick,dashed] (3/4,-5) to[out=70,in=310](.5,-3.5);
	\draw[thick,dashed] (.5,3.5) to[out=130,in=250](1/4,5);
	\draw[thick,dashed] (1/5,-5) to[out=80,in=230](1/2,-4.5);
	\draw[thick,dashed] (1/2,4.5) to[out=50,in=250](4/5,5);
	\zline{0}{black};
\end{tikzpicture}
\caption{$\M=H^{*,*}(pt;\uZ)$ and $\A_0=H^{*,*}(C_2;\uZ)$.}
\label{fig:pointzintro}
\end{figure}

The module $\A_0$ belongs to a family of $\M$-modules that can be described as follows. Let $S^n_{a}$ denote the $n$-sphere with the antipodal action. Observe $C_2=S^0_a$. Denote by $F_i$ the module $x^{-1} \M/(\rho^{i+1})$. Then 
\[\A_n=H^{*,*}(S^n_a;\uZ)\cong F_n \oplus \Sigma^{n,\epsilon}F_0,\]
where $\epsilon =0$ if $n$ is odd and $\epsilon =1$ if $n$ is even. These modules will appear often in our surface computations.

We now turn to the main focus of this paper: closed surfaces with a $C_2$-action. Computations in the orientable case with $\uZ$-coefficients were already done in \cite{LFdS14} using the language of real algebraic curves. In this paper we consider all $C_2$-surfaces and use equivariant surgery techniques that were introduced in \cite{Dug19}.

There are two cases: when the $C_2$-action is free (i.e. there are no fixed points) and when the $C_2$-action is nonfree (i.e. there are fixed points). We start by describing the answer in the free case. Observe that if we have a free $C_2$-surface $X$ and a non-equivariant surface $Y$, then we can construct another free $C_2$-surface via \emph{equivariant connected sum}. This is done by removing two conjugate disks in $X$ and attaching two conjugate copies of $Y$. Non-equivariantly this space is just $Y\# X \# Y$. 

We will prove the following:

\begin{theorem}
Let $X$ be a $C_2$-surface with a free $C_2$-action. Then 
\begin{itemize}[leftmargin=*]
\item $H^{*,*}(X;\uZ)$ is isomorphic to a direct sum of pairs of shifted modules of the form $F_i=\rho^{-1}\M/(x^{i+1})$ for $i=0,1,2$.
\item The exact shifts and numbers of summands depend only on the orientability of the surface, whether the action is orientation reversing versus preserving (in the orientable case), the genus, and whether the surface can be built via equivariant connected sum from a free torus versus a free sphere. 
\end{itemize}
\end{theorem}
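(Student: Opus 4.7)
The plan is to use Dugger's classification \cite{Dug19} of free $C_2$-surfaces: up to equivariant homeomorphism, each such surface is obtained from a small list of base surfaces (the antipodal sphere $S^2_a$, free tori with orientation-preserving or orientation-reversing action, and free non-orientable models) by iterated equivariant connected sum with a non-equivariant closed surface. The invariants listed in the theorem are precisely what is needed to distinguish the base and the total number of non-equivariant connected summands, so I would proceed by induction on the latter.

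First I would compute the base cases directly. The antipodal sphere $S^2_a$ has cohomology $\A_2$, already of the required form. For the free torus and free non-orientable base models, I would equip each with an equivariant CW-structure whose cells are all free $C_2$-cells, so that the cellular cochain complex consists of shifted copies of $\A_0$. The differentials reduce to integer matrices whose cokernels are easily identified, and the resulting cohomology can be read off as a direct sum of shifted modules $F_i$ with $i \in \{0,1,2\}$.

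For the inductive step I would use equivariant Mayer--Vietoris. Write $X = X_0 \#_{C_2} Y$ and cover $X$ by an equivariant neighborhood $U$ that retracts onto $X_0$ with two conjugate disks removed, together with an equivariant neighborhood $V \simeq C_2 \times (Y \setminus D)$, so that $U \cap V \simeq C_2 \times S^1$. The cohomologies of $V$ and $U\cap V$ are both sums of shifted copies of $\A_0$, and I would show the restriction $H^{*,*}(V;\uZ)\to H^{*,*}(U\cap V;\uZ)$ is surjective by retracting $Y\setminus D$ onto a bouquet of circles meeting the boundary nontrivially. The long exact sequence then collapses into short exact sequences expressing $H^{*,*}(X;\uZ)$ as an extension built from $H^{*,*}(X_0;\uZ)$ and summands coming from the new handles.

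The main obstacle will be resolving these extensions: a priori each inductive step determines $H^{*,*}(X;\uZ)$ only up to extension of $\M$-modules, and one must verify these extensions split. Here the fact that only $F_0, F_1, F_2$ appear is essential, as a direct computation of $\mathrm{Ext}^1_{\M}$ between the relevant shifts of these modules should show that the possible extensions vanish. The remaining work is bookkeeping: tracking how many copies of each $F_i$ and which shifts are produced by each equivariant connected sum, and verifying that the count depends only on orientability, orientation behavior in the orientable case, genus, and the ``sphere versus torus'' base invariant.
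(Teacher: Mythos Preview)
Your overall strategy---classification plus connected-sum induction---matches the paper's, but two concrete steps fail as written.

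First, the surjectivity claim in the Mayer--Vietoris step is false. With $V\simeq C_2\times (Y\setminus D)$ and $U\cap V\simeq C_2\times S^1$, the restriction in degree $(1,q)$ is governed by $H^1_{sing}(Y\setminus D;\Z)\to H^1_{sing}(S^1;\Z)$. When $Y$ is orientable the boundary circle represents a product of commutators in $\pi_1(Y\setminus D)$, hence is null in $H_1$, so this restriction is the zero map, not a surjection. (In the nonorientable case it is multiplication by $2$, still not surjective.) Thus the long exact sequence does not collapse as you claim, and the differentials you need to analyze are essentially the same ones the paper handles via the cofiber sequence $(C_2\times Y')_+\hookrightarrow (T\#_2 Y)_+\to \tilde{T}$.

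Second, the extension problems do \emph{not} split in general, so ``$\mathrm{Ext}^1_{\M}$ should vanish'' is the wrong expectation. The paper shows, for instance, that for $T=T_1^{anti}$ the extension
\[
0\to\Z\to \tilde H^{1,2k+1}(\tilde T)\to \Z\oplus\Z/2\to 0
\]
is nontrivial, and similar nontrivial $\rho$-extensions appear when passing from the kernel/cokernel picture to the final answer. The paper resolves these not by computing $\mathrm{Ext}$ over $\M$ (which is hard) but by three targeted tools: the quotient lemma identifying $H^{p,0}(X)\cong H^p_{sing}(X/C_2;\Z)$, the forgetful long exact sequence relating $\rho$-action to $H^*_{sing}(X;\Z)$, and comparison with the known $\underline{\Z/2}$-coefficient answer. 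You will need these tools (or equivalent ones) to pin down the extensions; a bare $\mathrm{Ext}$ argument is unlikely to succeed, and in any case ``the extension splits'' is not the correct conclusion.

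Two smaller points: Dugger's classification already gives $X\cong B\#_2 Y$ for a \emph{single} nonequivariant surface $Y$ and $B\in\{S^2_a,\,T_1^{rot},\,T_1^{anti}\}$, so no iteration is needed and there are no separate ``free nonorientable base models.'' And for the base cases the paper exploits the product decompositions $T_1^{rot}\cong S^{1,0}\times S^1_a$ and $T_1^{anti}\cong S^{1,1}\times S^1_a$ rather than a raw cellular chain computation; this makes the splittings immediate.
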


The above theorem is stated in full detail in Theorem \ref{freeanswer}. For now, we provide two examples to give the reader a feel for the computations. 

\begin{example} 
Give the genus one torus a free $C_2$-action by rotating $180^\circ$ around an axis through the donut hole, as illustrated in Figure \ref{fig:freeintro1} below. Denote this $C_2$-space by $T_1^{rot}$. We will prove $H^{*,*}(T_1^{rot};\uZ)\cong \A_1 \oplus \Sigma^{1,0}\A_1.$
This module is shown on the right in Figure \ref{fig:freeintro1}. To simplify the picture, the $x$-actions are suppressed because they are isomorphisms in every bidegree.

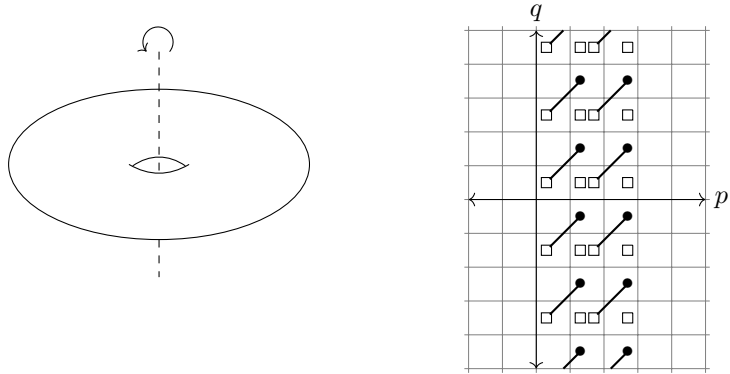
\begin{figure}[ht]
	\begin{subfigure}[b]{0.45\textwidth}
	\centering
	\begin{tikzpicture}[scale=1]
		\draw[->] (.14,1.5) arc (-40:220:.20cm);
		\draw (0,0) ellipse (2cm and 1cm);
		\draw (-0.4,0) to[out=330,in=210] (.4,0) ;
		\draw (-.35,-.02) to[out=35,in=145] (.35,-.02);
		\draw[dashed] (0,1.5)--(0,-0.13);
		\draw[dashed] (0,-1)--(0,-1.5);
	\end{tikzpicture}
	\vspace{0.5in}
	\end{subfigure}
	\begin{subfigure}[b]{0.45\textwidth}
	\centering
	\begin{tikzpicture}[scale=.45]
	\draw[help lines,gray] (-2.125,-5.125) grid (5.125, 5.125);
	\draw[<->] (-2,0)--(5,0)node[right]{$p$};
	\draw[<->] (0,-5)--(0,5)node[above]{$q$};
	\zantizo{-.2}{black};
	\zantizo{1.2}{black};
	\end{tikzpicture}
	\end{subfigure}
	\caption{$T_1^{rot}$ and $H^{*,*}(T_1^{rot};\uZ)\cong \A_1 \oplus \Sigma^{1,0}\A_1$.}
	\label{fig:freeintro1}
\end{figure}

\end{example}

\begin{example}
We next give an example of a Klein bottle with a free $C_2$-action to show how the answer differs when the surface is nonorientable. Construct a Klein bottle with a $C_2$-action by attaching via connected sum conjugate copies of $\R P^2$ to $S^2_a$. The attached copies of $\R P^2$ are shown as circles with $\times$'s in Figure \ref{fig:freeintro2}. The ``$\times$'' is used to remember the circles have antipodal boundary identifications. The lower circle is dashed to indicate that copy of $\R P^2$ is attached to the back of the sphere; this is conjugate to the upper non-dashed copy under the antipodal action. Denote this $C_2$-surface by $K_{free}$. We will show $H^{*,*}(K_{free};\uZ)\cong F_2 \oplus \Sigma^{1,0} F_1$. This module is shown in Figure \ref{fig:freeintro2}. The isomorphic action by $x$ is again suppressed from the picture.

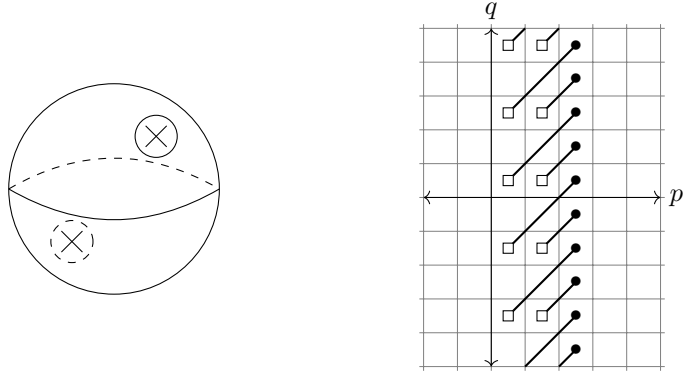
\begin{figure}[ht]
	\begin{subfigure}[b]{0.45\textwidth}
	\centering
	\begin{tikzpicture}[scale=1.4]
		\draw (0,0) circle (1cm);
		\draw (-1,0) to[out=330,in=210](1,0);
		\draw[dashed] (-1,0) to[out=30,in=150](1,0);
		\draw (0.4,0.5) circle (0.2cm);
		\draw (.5,.6)--(.3,.4);
		\draw (.5,.4)--(.3,.6);
		\draw[dashed] (-0.4,-.5) circle (0.2cm);
		\draw (-.5,-.6)--(-.3,-.4);
		\draw (-.5,-.4)--(-.3,-.6);
	\end{tikzpicture}
	\vspace{0.4in}
	\end{subfigure}
	\begin{subfigure}[b]{0.45\textwidth}
	\centering
	\begin{tikzpicture}[scale=0.45]
	\draw[help lines,gray] (-2.125,-5.125) grid (5.125, 5.125);
	\draw[<->] (-2,0)--(5,0)node[right]{$p$};
	\draw[<->] (0,-5)--(0,5)node[above]{$q$};
	\draw[thick] (1.5,4.5)--(2,5);
	\draw[thick](0.5,4.5)--(1,5);
	\zbox{0}{4}{black};
	\foreach \y in {-4,-2,0,2}
		{\draw[thick] (0.5,\y+0.5)--(2.5, \y+2.5) node{\dot};
		\zbox{0}{\y}{black};};
	\foreach\y in {-4,-2,0,2}
		\draw[thick] (1.5,\y+.5)--(2.5,\y+1.5) node{\small{$\bullet$}};
	\foreach\y in {-4,-2,0,2,4}
		\zbox{1}{\y}{black};
	\draw[thick] (1,-5)--(2.5,-3.5) node{\small{$\bullet$}};
	\draw[thick] (2,-5)--(2.5,-4.5) node{\small{$\bullet$}};
	\end{tikzpicture}
	\end{subfigure}
	\caption{$K_{free}$ and $H^{*,*}(K_{free};\uZ)\cong F_2 \oplus \Sigma^{1,0}F_1$.}
	\label{fig:freeintro2}
\end{figure}

Observe $H^{2,j}(K_{free};\uZ)\cong \Z/2$ for all $j$, whereas in the orientable example, the groups swap between $\Z$ and $\Z/2$ along the line $(2,j)$. This pattern can be observed for all free $C_2$-surfaces. 

\end{example}

We now consider the case when the action is nonfree. When the underlying surface is orientable, we show the cohomology is a direct sum of shifted copies of $\M$ and $\A_0$. But two new $\M$-modules appear in the nonorientable case. We illustrate these in examples before stating the main theorem. 

\begin{example}
Construct $\R P^2$ as a quotient of the disk with the antipodal boundary identifications. We can then give $\R P^2$ a $C_2$-action by rotating the disk $180^\circ$, as illustrated in Figure \ref{fig:rp2twintro}. Denote the resulting $C_2$-surface by $\R P^2_{twist}$. The cohomology is shown to the right, where all $x$-actions that are isomorphisms are again omitted. The only $x$-action shown is not an isomorphism: the $x$-action from $(2,-1)$ to $(2,1)$ is $0$.

\begin{figure}[ht]
	\begin{subfigure}[b]{0.45\textwidth}
	\centering
\begin{tikzpicture}[scale=1.5]
	\draw[blue,thick, fill=light-gray, decoration={markings, mark=at position 0 with {\arrow[black,scale=.5]{*}}, mark=at position 0.25 with {\arrow[black]{>}}, mark=at position 0.5 with {\arrow[black,scale=.5]{*}}, mark=at position 0.75 with {\arrow[black]{>}}}, postaction={decorate}] (0,0) ellipse (0.85 cm and 0.85 cm);
	\draw[blue] (0,0)node {\tiny{$\bullet$}};
	\draw[->] (.14,1.1) arc (-40:220:.20cm);
\end{tikzpicture}
	\vspace{0.25in}
	\end{subfigure}
	\begin{subfigure}[b]{0.45\textwidth}
	\centering
\begin{tikzpicture}[scale=0.45]
	\draw[help lines,gray] (-2.125,-5.125) grid (6.125, 5.125);
	\draw[<->] (-2,0)--(6,0)node[right]{$p$};
	\draw[<->] (0,-5)--(0,5)node[above]{$q$};
	\draw[thick] (2.5,1.5) node{\dot}--(6,5);
	\draw[thick] (2.5,3.5) node{\dot}--(4,5);
	\draw[thick] (2.5,-1.5) node{\dot}--(-1,-5);
	\draw[thick] (2.5,-3.5) node{\dot}--(1,-5);
	\draw[thick] (2.5,2.5) node{\dot}--(5,5);
	\draw[thick] (2.5,4.5) node{\dot}--(3,5);
	\draw[thick] (2.5,-0.5) node{\dot}--(-2,-5);
	\draw[thick] (2.5,-2.5) node{\dot}--(0,-5);
	\draw[thick] (2.5,-4.5) node{\dot}--(2,-5);
	\foreach \y in {1,2,3}
		\draw (2.5+\y,1.5+\y) node{\dot};
	\foreach \y in {1,2}
		\draw (2.5+\y,2.5+\y) node{\dot};
	\foreach \y in {1,2,3,4}
		\draw (2.5-\y, -0.5-\y) node{\dot};
	\foreach \y in {1,2,3}
		\draw (2.5-\y, -1.5-\y) node{\dot};
	\foreach \y in {1,2}
		\draw (2.5-\y, -2.5-\y) node{\dot};
	\draw (3.5,4.5) node{\dot};
	\draw (1.5,-4.5) node{\dot};
	\draw[thick,dashed] (2+1/2,1+1/2) arc (145:216:1.7);
	\draw (2.6,0.5) node{\tiny{$0$}};
\end{tikzpicture}
	\end{subfigure}
\caption{ $\R P^2_{twist}$ and $\H^{*,*}(\R P^2_{twist};\uZ)\cong \Sigma^{2,1}\M_2$.}
\label{fig:rp2twintro}
\end{figure}
\end{example}

\begin{rmk}
The module in the previous example can be realized in another way. There is a surjective map of Mackey functors $\uZ\to \underline{\Z/2}$. This induces a map of bigraded rings $H^{*,*}(pt;\uZ)\to H^{*,*}(pt;\underline{\Z/2})$ which makes $H^{*,*}(pt;\underline{\Z/2})$ into a module over $\M=H^{*,*}(pt;\uZ)$. The ring $H^{*,*}(pt;\underline{\Z/2})$ is often denoted by $\M_2$, and $\H^{*,*}(\R P^2_{twist};\uZ)\cong \Sigma^{2,1}\M_2$ as an $\M$-module. This module is described more in Remark \ref{rempointz2}.
\end{rmk}

\begin{example}\label{introkleinex}

Construct the Klein bottle $K$ as the quotient of a square using the boundary identifications shown in Figure \ref{fig:kintro}. Give $K$ a $C_2$-action via the illustrated reflection action. Denote the resulting $C_2$-surface by $K_{refl}$. The cohomology is shown to the right, where in the picture the number four within a circle denotes a copy of $\Z/4$. Again the $x$-action is omitted when it is an isomorphism. In bidegree $(2,-1)$ the $x$-action gives the inclusion $\Z/2 \hookrightarrow \Z/4$, $1\mapsto 2$ and is thus labeled ``$2$''. Denote by $\D_4$ the module given by shifting $\H^{*,*}(K_{refl};\uZ)$ to the left one unit so that the lowest $\Z/4$ is in bidegree $(1,1)$ and a copy of $\Z$ is in bidegree $(0,0)$.

\begin{figure}[ht]
	\begin{subfigure}[b]{0.45\textwidth}
	\centering
\begin{tikzpicture}[scale=1.2]
	\draw[fill=light-gray] (2,0)--(0,0)--(0,2)--(2,2)--(2,0);
	\draw (1,2) node{$>$};
	\draw (1,0) node{$<$};
	\draw (0,0.9) node{$\wedge$};
	\draw (0,1.1) node{$\wedge$};
	\draw (2,0.9) node{$\wedge$};
	\draw (2,1.1) node{$\wedge$};
	\draw[blue, very thick] (0,1)--(2,1);
	\draw[dashed] (-.45,1)--(2.45,1);
	\draw[<->] (2.4,1.2)--(2.4,.8);
\end{tikzpicture}
\vspace{0.43in}
\end{subfigure}
\begin{subfigure}[b]{0.45\textwidth}
	\centering
\begin{tikzpicture}[scale=0.45]
\draw[help lines,gray] (-2.125,-5.125) grid (6.125, 5.125);
\draw[<->] (-2,0)--(6,0)node[right]{$p$};
\draw[<->] (0,-5)--(0,5)node[above]{$q$};
	\draw[thick,dashed] (2+1/2,1+1/2) arc (145:216:1.7);
	\draw (2.6,.5) node{\tiny{$2$}};
	\foreach\y in {0,2,4}
	\draw[thick] (1.5,\y+.5)--(6-\y,5);
	\foreach\y in {-2,-4}
	\draw[thick] (1.5,\y+.5)--(2.5,\y+1.5);
	\draw[thick] (2.5,-4.5)--(2,-5);
	\foreach \y in {-4,-2,0,2,4}
	\zbox{1}{\y}{black};
	\foreach \y in {1,3}
	{\draw[fill=white](2.5,\y+.5) circle (.27cm);
	\draw (2.52,\y+.51) node{\scalebox{.55}{$4$}};
	};
	\foreach \y in {-1,-3,-5}
	\draw (2.52,\y+.51) node{\dot};
	\draw[thick] (2.5,-1.5)node{\dot}--(-1,-5);
	\draw[thick] (2.5,-3.5)node{\dot}--(1,-5);
	\foreach \y in {1,2,3}
		\draw (2.5-\y, -1.5-\y) node{\dot};
	\foreach \y in {1,2,3}
		\draw (2.5+\y,1.5+\y) node{\dot};
	\draw (1.5,-4.5) node{\dot};
	\draw (3.5,4.5) node{\dot};
\end{tikzpicture}
\end{subfigure}
\caption{$K_{refl}$ and $\H^{*,*}(K_{refl};\uZ)\cong \Sigma^{1,0}\D_4$.}
\label{fig:kintro}
\end{figure}
\end{example}

We now state the main computational theorems in the case where the action is nonfree and nontrivial. These are proven in Theorems \ref{nonfreeoranswerz} and \ref{nonfreenonoranswer}, respectively. For a $C_2$-surface $X$ with a nontrivial action, we use the following notation:
\begin{align*}
&F= \# \text{ of isolated fixed points}, \\
&C= \# \text{ of fixed circles},\\
&C_+= \# \text{ of fixed circles with trivial normal bundles},\\
&C_-= \# \text{ of fixed circles with nontrivial normal bundles}, \text{ and}\\
&\beta = \dim_{\Z/2} H^1_{sing}(X;\Z/2).
\end{align*}
Note for orientable surfaces, the value of $\beta$ is two times the genus. Also note $C=C_++C_-$.

\begin{theorem} Let $X$ be a nonfree, nontrivial $C_2$-surface whose underlying space is orientable. There are two cases for the cohomology of $X$ based on the fixed set:
\begin{enumerate}
\item[(i)] $F\neq 0$. Then \[\H^{*,*}(X)\cong \left(\Sigma^{1,1}\M\right)^{\oplus F-2} \oplus \left(\Sigma^{1,0}\A_0\right)^{\oplus \frac{\beta-F}{2}+1} \oplus \Sigma^{2,2}\M.\]
\item[(ii)] $C\neq 0$. Then
\[\H^{*,*}(X)\cong \left(\Sigma^{1,0}\M\right)^{\oplus C-1} \oplus \left(\Sigma^{1,1}\M\right)^{\oplus C-1}\oplus \left(\Sigma^{1,0}\A_0\right)^{\oplus \frac{\beta-2C}{2}+1} \oplus \Sigma^{2,1}\M.\]
\end{enumerate}
\end{theorem}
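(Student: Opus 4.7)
The plan is to induct on the complexity of the surface, using Dugger's equivariant surgery classification to reduce every orientable nonfree nontrivial $C_2$-surface to a short list of basic building blocks. In case (i), where the fixed set consists of $F\geq 1$ isolated points, every such surface is obtained from $S^2_{rot}$ (the $2$-sphere with rotation action, giving $F=2$ and $\beta=0$) by iterated equivariant surgery moves which fall into two families: moves that enlarge $\beta$ by $2$ while preserving $F$, and moves that enlarge $F$ by $2$ while preserving $\beta$. For case (ii), where the fixed set is a disjoint union of $C\geq 1$ circles, the base is the reflection sphere $S^2_{refl}$ (with $C=1$ and $\beta=0$), and one has analogous moves enlarging $\beta$ by $2$ or $C$ by $1$.

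The base cases are handled by direct computation: identifying these simplest $C_2$-spheres with representation spheres yields $\H^{*,*}(S^2_{rot};\uZ)\cong \Sigma^{2,2}\M$ and $\H^{*,*}(S^2_{refl};\uZ)\cong \Sigma^{2,1}\M$, which agree with the stated formulas when $(F,\beta)=(2,0)$ and $(C,\beta)=(1,0)$, respectively.

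For the inductive step I would set up the Mayer--Vietoris long exact sequence
\[
\cdots \to \H^{p-1,q}(L) \to \H^{p,q}(X') \to \H^{p,q}(X\setminus U) \oplus \H^{p,q}(Y\setminus V) \to \H^{p,q}(L) \to \cdots,
\]
associated to the equivariant connected sum $X'$ of a surface $X$ (to which the inductive hypothesis applies) with a surgery building block $Y$ along invariant disks $U\subseteq X$, $V\subseteq Y$, where $L$ is the corresponding equivariant linking sphere (a representation sphere in case (i), or a sphere bundle over a fixed circle in case (ii)). Since $X\setminus U$ and $Y\setminus V$ deformation retract onto lower-dimensional subcomplexes, the inductive hypothesis together with the known cohomology of $L$ determines $\H^{*,*}(X')$ up to extension. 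Matching against the claimed formula one then verifies that each surgery contributes precisely the predicted summands: a $\Sigma^{1,0}\A_0$ for each $\beta$-raising move, and appropriate copies of $\Sigma^{1,0}\M$ and $\Sigma^{1,1}\M$ for moves adding isolated fixed points or fixed circles.

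The main obstacles will be twofold. First, one must verify that the connecting homomorphisms in the Mayer--Vietoris sequence vanish on the appropriate summands, so that the long exact sequence breaks into short exact sequences; this reduces to careful tracking of restriction maps and of the actions of $\rho$ and $x$ on explicit cohomology classes. Second, one must check that the resulting short exact sequences of $\M$-modules split, either by computing the relevant $\operatorname{Ext}^1_\M$ groups using the explicit structure of $\M$, or, more geometrically, by exhibiting splittings via classes Poincar\'e-dual to invariant submanifolds. The top class guaranteed by the top-class theorem proven earlier in this paper provides the distinguished $\Sigma^{2,2}\M$ or $\Sigma^{2,1}\M$ summand appearing in every case, and is precisely the geometric input that allows the induction to close.
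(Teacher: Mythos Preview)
Your overall strategy---use Dugger's classification and induct via long exact sequences---matches the paper's, but the technical implementation diverges in two substantive ways.

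First, the paper does not induct down to the bare representation spheres. Instead it takes as base cases the double connected sums $S^{2,\epsilon}\#_2 T_g$ (and, for case (ii), also $(S^2_a\#_2 T_g)+[S^{1,0}\text{-}AT]$ and $(T_1^{anti}\#_2 T_g)+[S^{1,0}\text{-}AT]$), and computes these directly via the cofiber sequence obtained by including the equatorial $S^{1,\epsilon-1}$, with cofiber $C_{2+}\wedge T_g$. This handles the full genus contribution in one shot. Your description of ``moves that enlarge $\beta$ by $2$ while preserving $F$'' does not actually correspond to any single move in Dugger's classification for orientable surfaces: the operation $\#_2 T_1$ raises $\beta$ by $4$, and each $S^{1,1}$-antitube raises both $\beta$ and $F$ by $2$. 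So your induction scheme would need revision before it matches the available surgery moves.

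Second, the paper uses cofiber sequences rather than Mayer--Vietoris throughout. For the inductive step (attaching antitubes), the key sequence is $S^{1,\epsilon}\hookrightarrow X+[S^{1,\epsilon}\text{-}AT]\to \tilde{Y}$, where $\tilde{Y}$ is the ``pinched'' surface $\cof(C_2\hookrightarrow Y)$; Lemma~\ref{pinched} identifies $\tilde{Y}\simeq Y\vee S^{1,1}$, and Lemma~\ref{miscmani} forces the differential to vanish for degree reasons, so the extension splits immediately because the kernel is a free $\M$-module. This is considerably cleaner than your proposed route: you would need to identify the correct linking pieces for each operation, control the connecting maps, and then argue splitting via $\operatorname{Ext}^1_\M$ or explicit sections, none of which you have carried out. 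The paper's approach sidesteps all of this by choosing cofiber sequences whose kernels are free.
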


\begin{theorem}
 Let $X$ be a nontrivial, nonfree $C_2$-surface whose underlying space is nonorientable. Then there are three cases for the cohomology of $X$ based on the fixed set:
 \begin{enumerate}
 \item[(i)] $F \neq 0 $, $C=0$. Then 
\[\tilde{H}^{*,*}(X) \cong \left(\Sigma^{1,0}\A_0\right)^{\oplus \frac{\beta -F}{2}}\oplus \left(\Sigma^{1,1}\M\right)^{\oplus F-2} \oplus\Sigma^{1,1}\D_4.\]
 \item[(ii)] $F =0 $, $C_+\neq0$, $C_-=0$. Then
 \[\tilde{H}^{*,*}(X) \cong \left(\Sigma^{1,0}\A_0\right)^{\oplus \frac{\beta -2C}{2}}\oplus \left(\Sigma^{1,0}\M\right)^{\oplus C-1}\oplus \left(\Sigma^{1,1}\M\right)^{\oplus C-1} \oplus\Sigma^{1,0}\D_4.\]
 \item[(iii)] $F \neq 0 $ and $C_+\neq0$, or $C_-\neq0$. Then 
 \[\tilde{H}^{*,*}(X) \cong \left(\Sigma^{1,0}\A_0\right)^{\oplus \frac{\beta -(F+2C)}{2}+1}\oplus \left(\Sigma^{1,0}\M\right)^{\oplus C-1}\oplus \left(\Sigma^{1,1}\M\right)^{\oplus F+C-2} \oplus\Sigma^{2,1}\M_2.\]
 \end{enumerate}
\end{theorem}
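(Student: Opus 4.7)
The plan is to combine Dugger's equivariant classification of closed $C_2$-surfaces with a careful surgery analysis. By Dugger's work, every nontrivial, nonfree, nonorientable $C_2$-surface is equivariantly homeomorphic to a connected sum built from a small list of irreducible pieces (such as $\R P^2_{twist}$ and $K_{refl}$) together with equivariant connected sums along pairs of conjugate disks with either nonequivariant handles/crosscaps or with standard nonorientable fixed-set-altering pieces. I would induct on the complexity of $X$, measured by $\beta$ together with the fixed-set data $(F,C_+,C_-)$, verifying that the form of the cohomology displayed in the three cases is preserved under each surgery step.

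For base cases I would invoke the direct computations already in the excerpt: $\H^{*,*}(\R P^2_{twist};\uZ)\cong \Sigma^{2,1}\M_2$ seeds case (iii), and $\H^{*,*}(K_{refl};\uZ)\cong \Sigma^{1,0}\D_4$ seeds case (ii); for case (i) an analogous direct calculation on the smallest nonorientable surface admitting only isolated fixed points (e.g.\ a Klein bottle with a suitable involution fixing exactly two points) gives the $\Sigma^{1,1}\D_4$ summand. For the inductive step I would analyze two basic surgeries: (a) equivariant connected sum with a conjugate pair of nonequivariant handles or crosscaps, which increments $\beta$ by $2$ without touching the fixed set and, via the equivariant cofiber sequence attaching the piece, should add a single $\Sigma^{1,0}\A_0$ summand; and (b) surgeries that alter $(F,C_+,C_-)$ by adding a conjugate pair of isolated fixed points, adding a fixed circle, or switching a normal bundle from trivial to nontrivial. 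Each move produces a long exact sequence in $\H^{*,*}(-;\uZ)$ whose connecting pieces come from $\M$ and $\A_0$, and the expected $\Sigma^{1,1}\M$ or $\Sigma^{1,0}\M$ summands arise from the cofibers of fixed-point/circle attachments, matching the counts $F-2$, $C-1$, etc., in the stated formulas.

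The main obstacle will be resolving the extension problems at the critical bidegrees, which is precisely where the distinction between $\D_4$ and $\M_2$ lives: in cases (i) and (ii) the extension $0\to \Z/2\to ?\to \Z/2\to 0$ produced by $x$-multiplication near the fixed set must be shown to be nontrivial (giving a $\Z/4$, hence a $\D_4$ summand), while in case (iii) the presence of a mixed fixed-set or a nontrivially twisted fixed circle must force the extension to split (giving the $\M_2$ summand). My plan here is to use the Bockstein associated to the short exact sequence of Mackey functors $0\to \uZ\xrightarrow{\ 2\ }\uZ \to \underline{\Z/2}\to 0$, comparing directly to the known $\underline{\Z/2}$-coefficient answer from \cite{Haz20}, and to detect the extension by whether certain classes supported on equivariant neighborhoods of the fixed set lift to integral cohomology. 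The twisting of a $C_-$-circle's normal bundle, or the mixing of fixed points with fixed circles, should kill precisely the Bockstein class that otherwise produces the $\Z/4$, giving the transition to $\M_2$. Once this extension analysis is carried out, closure of each of cases (i)--(iii) under the surgery moves is a routine module-theoretic check, the induction closes, and the theorem follows.
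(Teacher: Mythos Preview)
Your plan is essentially the same as the paper's: reduce via Dugger's classification to a small list of base surfaces, compute those directly, and then propagate by induction along a fixed set of equivariant surgery moves, tracking how each move changes the summand count. The paper packages this as a sequence of surgery lemmas (attaching $S^{1,0}$- and $S^{1,1}$-antitubes adds $\Sigma^{1,0}\M\oplus\Sigma^{1,1}\M$ or $(\Sigma^{1,1}\M)^2$ respectively; $FM$-surgery adds $\Sigma^{1,0}\M$), together with base computations $S^{2,2}\#_2 N_r$ (your Klein-bottle-with-two-fixed-points is exactly $S^{2,2}\#_2\R P^2$), $S^{2,1}\#_2 N_r$, and $Y+[S^{1,0}\text{-}AT]$ for free $Y$.

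Two small corrections. First, your surgery (b) list is slightly off: there is no ``switch a normal bundle from trivial to nontrivial'' move. One-sided fixed circles are introduced by $FM$-surgery, which replaces an \emph{isolated fixed point} by a M\"obius band (so $F\mapsto F-1$, $C_-\mapsto C_-+1$, $\beta\mapsto\beta+1$). Running the induction with this move in place of your proposed one is what makes the bookkeeping in case (iii) close. Second, the $\D_4$ versus $\M_2$ dichotomy is not decided in the paper by a single Bockstein-on-the-normal-bundle argument; it falls out of the specific cofiber sequences used at the transition steps (adding an $S^{1,0}$-antitube to a surface that already has isolated fixed points, or performing $FM$-surgery on such a surface). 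The paper \emph{does} use comparison with $\underline{\Z/2}$-coefficients and the forgetful long exact sequence to resolve extensions in these computations, so your instinct is right, but you should expect to run the cofiber sequences explicitly rather than invoke a uniform Bockstein criterion.
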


\begin{rmk}
In cases (i) and (ii) there is exactly one summand of the form $\Sigma^{1,\epsilon}\D_4$, which gives a tower of $\Z/4$'s in topological degree $p=2$. In case (iii) there is exactly one summand of the form $\Sigma^{2,1}\M_2$, which gives a tower of $\Z/2$'s in topological degree $p=2$. In fact, we show for a nonorientable $C_2$-manifold of dimension $n$, there is always either a $\Z/4$ or $\Z/2$ in topological degree $p=n$ and in a predicted weight $q$. See Proposition \ref{nonortopm}.

The two possibilities of $\Z/4$ versus $\Z/2$ hint at different levels of nonorientability. The $C_2$-surface answer suggests this depends on properties of the fixed set. Observe the fixed set is especially nice in (i) and (ii): the dimension is constant across components, and the normal bundle of each component is orientable. This is not the case in (iii): here the fixed set either has components of different dimension, or has a component with a nonorientable normal bundle. Thus, for $C_2$-surfaces whose underlying space is nonorientable, the module $\D_4$ appears when the the fixed set is ``nice'', while the module $\M_2$ appears when the fixed set is ``not nice''. It is unclear if such a pattern persists for the cohomology of higher dimensional $C_2$-manifolds.
\end{rmk}

\subsection{Organization of the paper} Section \ref{sec:background} has background on $RO(C_2)$-graded cohomology and computational techniques. Section \ref{sec:somemods} introduces a family of $\M$-modules $\D_{4n}$ and a corresponding family of $C_2$-CW complexes whose cohomology is a shift of $\D_{4n}$. One of these $C_2$-CW complexes will be the Klein bottle described in Example \ref{introkleinex}. In Section \ref{sec:sphereimage} we explore general $C_2$-manifolds and the image of the map induced by quotienting to a representation sphere; this can be thought of as finding a top cohomology class that is the fundamental class for a fixed point in the manifold. The final sections focus on $C_2$-surfaces. Background on surgery methods and the classification is given in Section \ref{sec:backgroundsurg}. The computation for free $C_2$-surfaces is in Section \ref{sec:freecomp}, and the computation for nonfree $C_2$-surfaces is in Section \ref{sec:nonfreecomp}.

\subsection{Notation and conventions} Given a $C_2$-space $X$, we can forget the $C_2$-action and just consider the topological space. We will refer to this as the ``underlying space'' and still write $X$, ensuring that it is always clear from context whether we are referring to the $C_2$-space or the underlying space. We will write $X^{C_2}$ for the fixed set and $X/C_2$ for the quotient space. Throughout this paper, $C_2$-manifold or $C_2$-surface will mean a piecewise linear manifold with a locally linear $C_2$-action whose underlying space is, unless stated otherwise, a closed (connected, no boundary) manifold. These assumptions guarantee a $C_2$-manifold $M$ is a finite $C_2$-CW complex and that $M^{C_2}$ is a disjoint union of finitely many submanifolds, though note the dimensions of these submanifolds need not be constant. 

We will often omit coefficients and write $H^{*,*}(X)$ for $H^{*,*}(X;\uZ)$. We will be careful to distinguish between reduced cohomology and unreduced cohomology; $H^{*,*}(X)$ will always denote unreduced cohomology. 

\subsection{Acknowledgements} Much of this work was done while the author was a graduate student at University of Oregon. She thanks her doctoral advisor, Daniel Dugger, for all of his advice and guidance on this project. She also thanks Mike Hill and Clover May for many helpful conversations.

\section{Background on $RO(C_2)$-graded cohomology}\label{sec:background}

We review the necessary facts about $RO(C_2)$-graded cohomology in this section. For a more thorough background on $RO(G)$-graded cohomology see \cite[Chapter IX]{Maybook} or \cite[Section 2]{HHR}.

The grading $RO(C_2)$ is the Grothendieck ring of finite-dimensional, real, orthogonal $C_2$-representations. This is a free abelian group of rank $2$ with generators given by the isomorphism classes of the one-dimensional trivial representation $[\R_{triv}]$ and of the sign representation $[\R_{sgn}]$. Every element in $RO(C_2)$ can be written as \[\R^{p,q}:=(p-q)[\R_{triv}]+q[\R_{sgn}].\] We will write $H^{p,q}(-;\underline{M})$ for $H^{\R^{p,q}}(-;\underline{M})$ and refer to the first grading as the {\bf{topological degree}} and the second grading as the {\bf{weight}}.

The coefficients of Bredon cohomology are a Mackey functor. For the group $C_2$, a {\bf{Mackey functor}} $\underline{M}$ is a choice of two abelian groups $\underline{M}(C_2/e)$ and  $\underline{M}(C_2/C_2)$ for the two orbits, and a choice of maps
\[p^*: \underline{M}(C_2/C_2) \to \underline{M}(C_2/C_2), \quad \quad p_*:\underline{M}(C_2/e)\to \underline{M}(C_2/C_2),\]
\[ t^*, t_*: \underline{M}(C_2/e)\to \underline{M}(C_2/e).\]
These maps also must satisfy some compatibility relations that we won't recall here. In Bredon cohomology with $\underline{M}$ coefficients, we have that 
\[H^{0,0}(C_2/C_2;\underline{M})=\underline{M}(C_2/C_2), \quad \quad H^{0,0}(C_2/e;\underline{M})=\underline{M}(C_2/e),\]
and the maps induced by the non-identity maps of orbits $p:C_2/e\to C_2/C_2$ and $t:C_2/e\to C_2/e$ are exactly the maps $p^*$ and $t^*$ from the Mackey functor. If $\underline{M}$ has the additional structure that it is a Green functor (also sometimes called a Mackey ring), then the cohomology is a bigraded ring.

We are interested in constant Mackey functors in this paper. For an abelian group $B$, the {\bf{constant Mackey functor}} $\underline{B}$ is defined so that 
\[\underline{B}(C_2/C_2)=\underline{B}(C_2/e)=B, \quad \quad  p^*=t^*=t_*=1, \text{ and} \quad \quad p_*=2.\] We will mostly be concerned with $\uZ$, but will also occasionally consider $\underline{\Z/2}$.\medskip 

Bredon cohomology is an ordinary $RO(C_2)$-graded cohomology theory, and thus for the constant Mackey functor $\underline{B}$,
\[
H^{p,0}(C_2;\underline{B}) =H^{p,0}(pt;\underline{B})= \begin{cases} B, \quad \quad &p=0\\ 0, \quad \quad &p\neq 0. \end{cases}
\] This vanishing does not have to persist when the weight is nonzero though, and indeed, the cohomology of both orbits is nonzero in infinitely many bidegrees. Since the cohomology is a bigraded ring, we use a grid to record information about the groups and other algebraic structures. Figure \ref{fig:pointz} shows the cohomology of a point, which we denote by $\M=H^{*,*}(pt;\uZ)$ and will describe shortly. This computation is often attributed to unpublished notes of Stong. Lewis also computed the cohomology of a point in coefficients given by any constant Mackey functor in \cite[Section 2]{L87}, and Dugger recomputed $H^{*,*}(pt;\uZ)$ in \cite[Appendix B]{Dug06}. The notation in Dugger's computation matches the notation given in this paper.
\begin{figure}[ht]
\begin{tikzpicture}[scale=.5]
\draw[help lines,gray] (-5.125,-5.125) grid (5.125, 5.125);
\draw[<->] (-5,0)--(5,0)node[right]{$p$};
\draw[<->] (0,-5)--(0,5)node[above]{$q$};
	\draw[thick,dashed] (1/2,1/2) to[out=120,in=240](1/2,2.5);
	\draw[thick,dashed] (1/2,2+1/2) to[out=120,in=240](1/2,4.5);
	\draw[thick,dashed] (1+1/2,1+1/2) to[out=120,in=240](1+1/2,3.5);
	\draw[thick,dashed] (1+1/2,3+1/2) to[out=120,in=260](1+1/3,5);
	\draw[thick,dashed] (2+1/2,2+1/2) to[out=120,in=240](2+1/2,4.5);
	\draw[thick,dashed] (3+1/2,3+1/2) to[out=120,in=260](3+1/3,5);
	\draw[thick,dashed] (4+1/2,4+1/2) to[out=120,in=270](4+1/3,5);
	\draw[thick,dashed] (2+1/2,4+1/2) to[out=120,in=270](2+1/3,5);
	\draw[thick,dashed] (1/2,-4+1/2) to[out=120,in=240](1/2,-1.5);
	\draw[thick,dashed] (1/2,-5+1/2) to[out=60,in=300](1/2,-2.5);
	\draw[thick,dashed] (-1/3,-5) to[out=80,in=300](-.5,-3.5);
	\draw[thick,dashed] (-4/3,-5) to[out=100,in=300](-1.5,-4.5);
	\draw[thick,dashed] (.7,-5) to[out=100,in=310](1/2,-4.5);
	\draw[thick,dashed] (1/5,-5) to[out=100,in=240](1/2,-3.5);
	\draw[thick,dashed] (1/2,4+1/2) to[out=120,in=270](1/3,5);
	\foreach \x in {1,2,3,4}
	\draw (\x+.5,\x+.5) node{\small{$\bullet$}};
	\foreach \x in {1,2}
	\draw (\x+.5,\x+2.5) node{\small{$\bullet$}};
	\zcone{0}{0}{black};
	\draw[thick] (.6,-.5) node{\tiny{$\times 2$}};
	\draw[thick] (-.5,-3.5) node{\small{$\bullet$}};
	\draw[thick](-1.5,-4.5)node{\small{$\bullet$}};
	\draw[thick](-1.5,-4.5)--(-2,-5);
	\draw (-.3,2.3) node{$x$};
	\draw (-.4,4.35) node{$x^2$};
	\draw (2,1.5) node{$\rho$};
	\draw (3,2.5) node{$\rho^2$};
	\draw (1,-1.5) node{$\theta$};
\draw (1,-2.5) node{$\mu$};
\draw (1.4,-3.5) node{$\sfrac{\theta}{x}$};
\draw (-.7,-2.8) node{$\sfrac{\mu}{\rho}$};
\draw (-1.7,-3.8) node{$\sfrac{\mu}{\rho^2}$};
\draw (1.4,-4.5) node{$\sfrac{\mu}{x}$};
\end{tikzpicture}\hspace{0.5in}
\begin{tikzpicture}[scale=.5]
\draw[help lines,gray] (-5.125,-5.125) grid (5.125, 5.125);
\draw[<->] (-5,0)--(5,0)node[right]{$p$};
\draw[<->] (0,-5)--(0,5)node[above]{$q$};
\zcone{0}{0}{black};
\end{tikzpicture}
\caption{The ring $\M=H^{*,*}(pt;\underline{\Z})$ and an abbreviated picture.}
\label{fig:pointz}
\end{figure}

In Figure \ref{fig:pointz} a box is used to denote a copy of $\Z$ and a dot is used to denote a copy of $\Z/2$. We plot information about the $(p,q)$ group up and to the right of the $(p,q)$ lattice point. For example, $H^{0,2}(pt;\uZ)=\Z$, $H^{1,1}(pt;\uZ)=\Z/2$, and $H^{0,1}(pt;\uZ)=0$. The portion of $\M$ in the first quadrant is isomorphic to $\Z[x,\rho]/(2\rho)$ where $x$ is in bidegree $(0,2)$ and $\rho$ is in bidegree $(1,1)$. In the bottom portion of the picture, there is an element $\theta$ in bidegree $(0,-2)$ that generates a copy of $\Z$ and is infinitely divisible by $x$. That is, there are elements $\theta/x^j$ for all $j\geq 1$ such that $x^j\cdot \theta/x^{j} = \theta$. There is also a $2$-torsion element $\mu$ in bidegree $(0,-3)$ that is infinitely divisible by both $\rho$ and $x$. The dashed vertical lines are used to indicate action by $x$, while the diagonal lines are used to indicate action by $\rho$. The action by $x$ is an isomorphism whenever possible, except in bidegree $(0,-2)$ where $x\theta =2$. 

The right-hand grid in Figure \ref{fig:pointz} shows an abbreviated picture of the free module that we will use in computations. We omit the $x$-connections that are isomorphisms, and just keep one dashed line to remember the action by $x$ is multiplication by $2$ in this bidegree.

\begin{rmk}\label{rempointz2} The ring $\M_2=H^{*,*}(pt;\underline{\Z/2})$ has similar structure. See Figure \ref{fig:pointz2}. We have a map of Green functors $\uZ\to \underline{\Z/2}$ given by the surjective map $\Z\to \Z/2$ at each orbit. This induces a ring map $\M\to \M_2$ where $\rho\mapsto \rho$, $x\mapsto \tau^2$, $\theta\mapsto \theta$, and $\mu\mapsto \theta/\tau$. This gives $\M_2$ an $\M$-module structure that is illustrated to the right in Figure \ref{fig:pointz2}. As usual, the $x$-actions that are isomorphisms are omitted for brevity, but we keep track of the action in bidegree $(0,-2)$ since $x$ acts trivially here. 
\end{rmk}

\begin{figure}[ht]
\begin{tikzpicture}[scale=.45]
\draw[help lines,gray] (-5.125,-5.125) grid (5.125, 5.125);
\draw[<->] (-5,0)--(5,0)node[right]{$p$};
\draw[<->] (0,-5)--(0,5)node[above]{$q$};
\foreach \y in {0,1,2,3,4}
\draw (0.5,\y+.5) node{\small{$\bullet$}};
\cone{0}{0}{black};
\foreach \y in {1,2,3,4}
\draw (1.5,\y+.5) node{\small{$\bullet$}};
\foreach \y in {2,3,4}
\draw (2.5,\y+.5) node{\small{$\bullet$}};
\foreach \y in {3,4}
\draw (3.5,\y+.5) node{\small{$\bullet$}};
\foreach \y in {4}
\draw (4.5,\y+.5) node{\small{$\bullet$}};
\foreach \y in {-1,-2,-3,-4}
\draw (.5,\y-.5) node{\small{$\bullet$}};
\cone{0}{0}{black};
\foreach \y in {-2,-3,-4}
\draw (-.5,\y-.5) node{\small{$\bullet$}};
\foreach \y in {-3,-4}
\draw (-1.5,\y-.5) node{\small{$\bullet$}};
\foreach \y in {-4}
\draw (-2.5,\y-.5) node{\small{$\bullet$}};
\cone{0}{0}{black};
\draw[thick](1.5,5)--(1.5,1.5)node[below, right]{$\rho$};
\draw[thick](2.5,5)--(2.5,2.5)node[below, right]{$\rho^2$};
\draw[thick](3.5,5)--(3.5,3.5)node[below, right]{$\rho^3$};
\draw[thick](4.5,4.5)--(4.5,5);
\draw[thick](.5,1.5)node[xshift=-2.2ex]{$\tau$}--(4,5);
\draw[thick](.5,2.5)node[xshift=-2.45ex]{$\tau^2$}--(3,5);
\draw[thick](.5,3.5)node[xshift=-2.45ex]{$\tau^3$}--(2,5);
\draw[thick](.5,4.5)--(1,5);
\draw[thick](-.5,-2.5)--(-.5,-5);
\draw[thick](-1.5,-3.5)--(-1.5,-5);
\draw[thick](-2.5,-4.5)--(-2.5,-5);
\draw[thick](.5,-2.5)--(-2,-5);
\draw[thick](.5,-3.5)--(-1,-5);
\draw[thick](.5,-4.5)--(0,-5);
\draw (1,-1.5) node{$\theta$};
\draw (1.3,-2.5) node{$\sfrac{\theta}{\tau}$};
\draw (1.4,-3.5) node{$\sfrac{\theta}{\tau^2}$};
\draw (1.4,-4.5) node{$\sfrac{\theta}{\tau^3}$};
\draw (-.7,-1.7) node{$\sfrac{\theta}{\rho}$};
\draw (-1.7,-2.7) node{$\sfrac{\theta}{\rho^2}$};
\draw (-2.7,-3.7) node{$\sfrac{\theta}{\rho^3}$};
\end{tikzpicture}\hspace{0.5 in}
\begin{tikzpicture}[scale=.45]
\draw[help lines,gray] (-5.125,-5.125) grid (5.125, 5.125);
\draw[<->] (-5,0)--(5,0)node[right]{$p$};
\draw[<->] (0,-5)--(0,5)node[above]{$q$};
\draw[thick,dashed] (1/2,1/2) arc (145:216:1.7);
\draw (.6,-.5) node{\tiny{$0$}};
\foreach\y in {0,2,4}
\draw[thick] (0.5,\y+.5)--(5-\y,5);
\foreach \y in {-4,-2}
\draw[thick] (.5,\y+.5)--(-5-\y,-5);
\foreach\y in {-4,-2,0,2,4}
\draw (.5,\y+.5) node{\dot};
\foreach\y in{1,3}
\draw[thick] (.5,\y+.5) node{\dot}--(5-\y,5);
\foreach\y in{-5,-3}
\draw[thick] (.5,\y+.5) node{\dot}--(-5-\y,-5);
\foreach \y in {1,2,3,4}
	\draw (1.5,\y+.5) node{\dot};
\foreach \y in {2,3,4}
	\draw (2.5,\y+.5) node{\dot};
\foreach \y in {3,4}
	\draw (3.5,\y+.5) node{\dot};
\foreach \y in {4}
	\draw (4.5,\y+.5) node{\dot};
\foreach \y in {-3,-4,-5}
	\draw (-.5,\y+.5) node{\dot};
\foreach \y in {-4,-5}
	\draw (-1.5,\y+.5) node{\dot};
\foreach \y in {-5}
	\draw (-2.5,\y+.5) node{\dot};
\end{tikzpicture}
\caption{$\M_2=H^{*,*}(pt; \underline{\Z/2})$ as a bigraded ring and as an $\M$-module.}
\label{fig:pointz2}
\end{figure}

Let $X$ be a $C_2$-space. We have an equivariant map $X\to pt$ and so $H^{*,*}(X;\underline{\Z})$ is a module over the ring $\M=H^{*,*}(pt;\underline{\Z})$. Our goal is to understand $H^{*,*}(X;\uZ)$ as an $\M$-module. We use the rest of this section to outline computational techniques. 

Given a finite-dimensional, real, orthogonal $C_2$-representation $V$, we can consider the unit sphere $S(V)$, the unit disk $D(V)$, and the one-point compactification $\hat{V}$. The space $\hat{V}$ is a $C_2$-sphere whose underlying space has dimension equal to the dimension of $V$. We refer to $\hat{V}$ as a {\bf{representation sphere}}. When $V=\R^{p,q}$, we write $S^{p,q}$ for $\hat{\R}^{p,q}$. Note the underlying space is just $S^p$. A few examples of representation spheres are shown in Figure \ref{fig:spheres} with the fixed set shown in blue. 

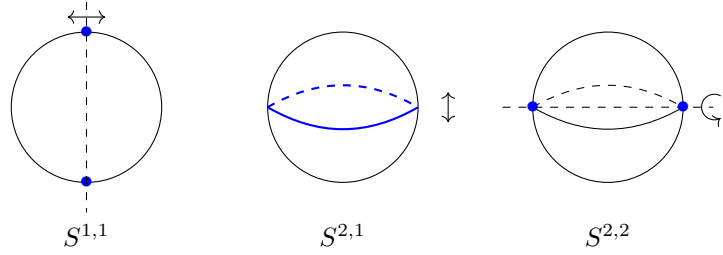
\begin{figure}[ht]
\begin{tikzpicture}
	\draw (0,0) circle (1cm);
	\draw[blue] (0,-1) node{$\bullet$};
	\draw[blue] (0,1) node{$\bullet$};
	\draw[dashed] (0,1.4)--(0,-1.4);
	\draw[<->] (-.25,1.2)--(.25,1.2);
	\draw (0,-1.7) node{$S^{1,1}$};
\end{tikzpicture}\hspace{.5in}
\begin{tikzpicture}
	\draw (0,0) circle (1cm);
	\draw[blue,thick] (-1,0) to[out=330,in=210](1,0);
	\draw[blue,thick, dashed] (-1,0) to[out=30,in=150](1,0);
	\draw (0,-1.7) node{$S^{2,1}$};
	\draw[<->] (1.4,-.2)--(1.4,.2);
\end{tikzpicture}\hspace{.2in}
\begin{tikzpicture}
	\draw (0,0) circle (1cm);
	\draw (-1,0) to[out=330,in=210](1,0);
	\draw[dashed] (-1,0) to[out=30,in=150](1,0);
	\draw[dashed] (-1.4,0)--(1.4,0);
	\draw[->] (1.5,.15) arc (60:300:.17cm);
	\draw (0,-1.7) node{$S^{2,2}$};
	\draw[blue] (1,0)node{$\bullet$};
	\draw[blue] (-1,0)node{$\bullet$};
\end{tikzpicture}
\caption{Some representation spheres.}
\label{fig:spheres}
\end{figure}

If $X$ is a based $C_2$-space, we can form the $(p,q)$-th suspension of $X$ as
\[
\Sigma^{p,q}X:=S^{p,q} \wedge X.
\]
Bredon cohomology is an $RO(C_2)$-graded cohomology theory, which means it has a suspension isomorphism
\[
\H^{*,*}(-;\underline{M}) \cong \H^{*+p,*+q}(\Sigma^{p,q}(-);\underline{M}).
\]
In particular, for any representation sphere $S^{p,q}$, we have that $\H^{*,*}(S^{p,q})=\Sigma^{p,q}\M$ where $\Sigma^{p,q}\M$ is given by shifting the module $\M$ by $(p,q)$. Visually, we just take the picture for $\M$ and shift it over $p$ units and up $q$ units.

Given a cofiber sequence of $C_2$-spaces
\[
A\overset{f}{\to} X \to C(f),
\]
we can form a $C_2$-equivariant version of the usual Puppe sequence:
\[
A \to X \to C(f) \to \Sigma^{1,0} A \to \Sigma^{1,0}X \to \Sigma^{1,0} C(f) \to \dots
\]
Applying $\H^{p,q}(-)$ gives an associated long exact sequence:
\[
\to \H^{p-1,q}(A)\to \H^{p,q}(C(f)) \to \H^{p,q}(X) \to \H^{p,q}(A) \to \H^{p+1,q}(C(f)) \to
\]
In order to form suspensions (and thus Puppe sequences), we need based spaces and based maps. The base point must be fixed, so given a free $C_2$-space $X$, we will often attach a disjoint fixed basepoint and denote this by $X_+=X\sqcup pt$.

In the associated long exact sequence, we have a differential \[d^{p,q}:\H^{p,q}(A) \to \H^{p+1,q}(C(f))\] for each $p,q$. These assemble to give a degree $(1,0)$ $\M$-module map \[d:\H^{*,*}(A) \to \H^{*+1,*}(C(f)).\] 

We finish this section with some helpful computational lemmas. The first is a standard fact relating the Bredon cohomology to the singular cohomology of the quotient space. This statement holds for any constant Mackey functor, not just $\uZ$. A simple proof in $\underline{\Z/2}$-coefficients can be found in \cite[Lemma 3.1]{Haz20}, and the same method will work here.

\begin{lemma}[The quotient lemma]\label{quotient} Let $X$ be a finite $C_2$-CW complex. Then $H^{p,0}(X;\uZ)\cong H^p_{sing}(X/C_2;\Z).$
\end{lemma}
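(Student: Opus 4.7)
The plan is to compute both sides cellularly using the $C_2$-CW structure on $X$. A $C_2$-CW structure on $X$ has equivariant cells of the form $(C_2/H) \times D^n$ for $H \in \{e, C_2\}$; under the quotient map $X \to X/C_2$ these descend to an ordinary CW structure on $X/C_2$ with exactly one $n$-cell per equivariant $n$-cell of $X$. So the goal becomes showing the weight-$0$ Bredon cellular cochain complex $C^{*,0}_{Br}(X;\uZ)$ is isomorphic to the ordinary cellular cochain complex $C^{*}_{\text{cell}}(X/C_2;\Z)$.

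The contribution of an equivariant $n$-cell of orbit type $C_2/H$ to $C^{n,0}_{Br}(X;\uZ)$ is $\H^{n,0}((C_2/H)_+ \wedge S^n;\uZ) \cong \H^{0,0}((C_2/H)_+;\uZ)$ via the suspension isomorphism. The two inputs I would invoke are $H^{p,0}(pt;\uZ)$, which is $\Z$ for $p=0$ and zero otherwise (visible from $\M$), and $H^{p,0}(C_2;\uZ)$, which has the same pattern (read off $\A_0 = F_0 \oplus \Sigma^{0,1}F_0$, both summands of which are supported on the line $p=0$). Hence each equivariant $n$-cell contributes a single $\Z$ in cochain degree $n$, matching the contribution of the corresponding cell to $C^*_{\text{cell}}(X/C_2;\Z)$. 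The cochain groups therefore agree abstractly.

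It remains to identify the differentials. Both are assembled from degrees of attaching maps, and on the Bredon side the weight-$0$ differential sees only $H^{0,0}$ of each orbit, a copy of $\Z$ on which the residual symmetry acts trivially; so these degrees coincide with the ordinary degrees of the quotiented attaching maps $\bar\phi:S^{n-1}/C_2 \to X^{(n-1)}/C_2$, which are the attaching maps of $X/C_2$. Packaging this into a chain map, one obtains $C^{*,0}_{Br}(X;\uZ) \cong C^*_{\text{cell}}(X/C_2;\Z)$, and taking cohomology proves the claim. An equivalent route, closer to the $\underline{\Z/2}$-argument in \cite{Haz20}, is to induct on the number of equivariant cells: for $X^{(k)} = X^{(k-1)}\cup_\phi ((C_2/H)\times D^k)$ feed the Puppe long exact sequence in weight $0$ (whose relative term is just a single $\Z$ in degree $k$ by the vanishing facts above) into the five-lemma against the long exact sequence of the pair $(X^{(k)}/C_2, X^{(k-1)}/C_2)$ in singular cohomology. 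The main obstacle is the compatibility of differentials, i.e.\ checking that the natural comparison $H^{*,0}(-;\uZ)\to H^*_{sing}((-)/C_2;\Z)$ (built from the projection $X\to X/C_2$ regarded as an equivariant map to a trivially-acted space, together with the identification $H^{*,0}(Y;\uZ)\cong H^*_{sing}(Y;\Z)$ for $Y$ with trivial action) commutes with connecting maps; once that naturality is verified the rest is formal.
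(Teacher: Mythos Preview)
Your proposal is correct. The paper does not supply its own proof here; it simply cites \cite[Lemma 3.1]{Haz20} and says the same method works for $\uZ$. The inductive five-lemma route you sketch at the end is exactly that method, so you have reproduced the paper's intended argument. Your first approach, identifying the weight-$0$ Bredon cellular cochain complex directly with $C^*_{\text{cell}}(X/C_2;\Z)$, is a legitimate alternative and arguably more conceptual: it exposes that the statement really only uses $\uZ(C_2/e)=\uZ(C_2/C_2)=\Z$ with $p^*=\mathrm{id}$, which is precisely why it holds for any constant Mackey functor. The one place you flag as needing care---matching the Bredon differential with the quotient attaching-map degree---is genuine but routine once you note that for a constant coefficient system the restriction maps between orbits are identities, so the Bredon incidence numbers collapse to ordinary incidence numbers in $X/C_2$.
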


Next recall there is an element $\rho \in \M$ in bidegree $(1,1)$. This element has a geometric representation given by the inclusion of fixed points $\rho:S^{0} \hookrightarrow S^{1,1}$. The cofiber of this map is $C_{2+}\wedge S^{1,0}$ (a wedge of two copies of $S^1$ where the $C_2$-action swaps the copies). Smashing this cofiber sequence with a $C_2$-space $X$ then yields the following long exact sequence, which was first described in \cite{AM}:

\begin{lemma}[The forgetful long exact sequence]\label{fles} Let $X$ be a $C_2$-space. Then we have a long exact sequence
\[
\to H^{p,q}(X;\uZ) \overset{\rho}{\to} H^{p+1,q+1}(X;\uZ) \overset{\psi}{\to} H^{p+1}_{sing}(X;\Z) \to H^{p+1,q}(X;\uZ) \overset{\rho}{\to} 
\]
where $\rho$ denotes the action of $\rho\in \M$ and $\psi$ denotes the forgetful map to singular cohomology.
\end{lemma}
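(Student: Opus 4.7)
The plan is to smash the cofiber sequence $S^0\xrightarrow{\rho} S^{1,1}\to C_{2+}\wedge S^{1,0}$ described just before the statement with $X_+$ and read off the associated Puppe long exact sequence in reduced $RO(C_2)$-graded cohomology. Since Bredon cohomology is an $RO(C_2)$-graded cohomology theory, the Puppe long exact sequence is given for free; the work lies in identifying each of its terms with the groups in the statement and identifying the connecting maps with $\rho$ and $\psi$.

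For the term identifications, I would first use $\tilde{H}^{*,*}(X_+)=H^{*,*}(X)$ and the suspension isomorphism $\tilde{H}^{p+1,q+1}(X_+\wedge S^{1,1})\cong H^{p,q}(X)$. Next, I would rewrite $X_+\wedge C_{2+}\wedge S^{1,0}\cong \Sigma^{1,0}(X\times C_2)_+$, where $C_2$ acts on $X\times C_2$ by swapping the two conjugate copies of $X$, so that the action is free with quotient $X$. Combining the suspension isomorphism with the fact that the Bredon cohomology of a free $C_2$-space with constant coefficients is $x$-periodic and concentrated on the quotient, together with the quotient lemma (Lemma \ref{quotient}), one obtains $\tilde{H}^{p+1,q+1}(X_+\wedge C_{2+}\wedge S^{1,0})\cong H^{p+1}_{sing}(X;\Z)$, independent of $q$. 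Feeding these identifications into the Puppe sequence then yields a long exact sequence whose terms match the statement.

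For the maps, the connecting map induced by $S^0\hookrightarrow S^{1,1}$ is by construction multiplication by the class of that inclusion; but $\rho\in\M$ is precisely the class of $S^0\hookrightarrow S^{1,1}$, so this map is the $\rho$-action on $H^{*,*}(X)$ by naturality. The map induced by $S^{1,1}\to C_{2+}\wedge S^{1,0}$ must then be identified with the forgetful map $\psi$ to singular cohomology once composed with the free-orbit isomorphism above; by naturality in $X$ this reduces to checking the identification for $X=pt$, where it follows directly from the definition of the free-orbit/quotient-lemma isomorphism.

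The main obstacle is this last identification: verifying that the Puppe differential into the free-orbit piece coincides with the classical forgetful map to singular cohomology is a naturality chase rather than a computation, but it requires being careful about which sequence of isomorphisms is being composed (reduced vs.\ unreduced, suspension, and the quotient lemma). Once that is settled, the remainder is routine unwinding of the Puppe sequence.
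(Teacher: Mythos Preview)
Your proposal is correct and follows exactly the approach the paper indicates: the paper does not give a detailed proof but simply notes (in the paragraph preceding the lemma) that $\rho$ is represented by $S^0\hookrightarrow S^{1,1}$ with cofiber $C_{2+}\wedge S^{1,0}$, and that smashing this cofiber sequence with $X$ yields the stated long exact sequence, citing \cite{AM}. Your outline fills in precisely the identifications the paper leaves implicit.
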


There is another standard fact relating localization by $\rho$ and the singular cohomology of the fixed set. A proof with $\underline{\Z/2}$-coefficients using this notation can be found in \cite[Lemma 4.3]{clover}. The same method will work for $\uZ$-coefficients.

\begin{lemma}[$\rho$-localization]\label{rho_local}
Let $X$ be a finite $C_2$-CW complex. Then $$\rho^{-1}H^{*,*}(X;\underline{\Z})\cong \rho^{-1}\M \otimes_\Z H_{sing}^{*}(X^{C_2};\Z).$$
\end{lemma}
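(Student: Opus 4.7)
The plan is to reduce to showing that $\rho$-localization kills the cohomology of $C_2$-spaces whose action is free away from the basepoint, and then exploit the cofiber sequence coming from the inclusion of the fixed set. Since $X$ is a finite $C_2$-CW complex, we may arrange $X^{C_2}$ to be a subcomplex, giving a cofiber sequence
\[ X^{C_2}_+ \to X_+ \to X/X^{C_2}. \]
The quotient $X/X^{C_2}$ is a based finite $C_2$-CW complex whose $C_2$-action is free off of the basepoint, since the entire fixed locus of $X$ has been collapsed. Applying $\H^{*,*}(-;\uZ)$ and using exactness of localization, the result will follow once I show the localized cohomology of $X/X^{C_2}$ vanishes.

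The key lemma I need is: \emph{for any finite based $C_2$-CW complex $Z$ whose action is free away from the basepoint, $\rho^{-1}\H^{*,*}(Z;\uZ)=0$.} I would prove this by induction on the number of equivariant cells. Such a $Z$ is built only from free cells of the form $C_{2+}\wedge D^{n,0}$ attached along their boundary $C_{2+}\wedge S^{n-1,0}$. The base case $Z = C_{2+}$ gives $\H^{*,*}(C_{2+};\uZ)\cong \A_0$, which by the description in the introduction is a direct sum of shifts of $x^{-1}\M/(\rho)$ and is therefore annihilated by $\rho$; hence $\rho^{-1}\A_0 = 0$. For the inductive step, a cell attachment yields a cofiber sequence whose long exact sequence in $\H^{*,*}(-;\uZ)$ remains exact after localizing at $\rho$, and two of every three terms vanish by induction, forcing the third to vanish as well.

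Granted the lemma, the long exact sequence of the cofiber sequence $X^{C_2}_+ \to X_+ \to X/X^{C_2}$ shows, after applying the exact functor $\rho^{-1}(-)$, that the restriction map
\[ \rho^{-1}H^{*,*}(X;\uZ) \longrightarrow \rho^{-1}H^{*,*}(X^{C_2};\uZ) \]
is an isomorphism. Finally, $X^{C_2}$ has trivial $C_2$-action, so it admits a CW structure using only trivial cells; a cellular cochain argument (or equivalently a collapsing AHSS) gives the standard isomorphism $H^{*,*}(X^{C_2};\uZ) \cong \M\otimes_{\Z} H^{*}_{sing}(X^{C_2};\Z)$ of $\M$-modules. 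Localizing this at $\rho$ yields $\rho^{-1}H^{*,*}(X^{C_2};\uZ)\cong \rho^{-1}\M\otimes_{\Z}H^{*}_{sing}(X^{C_2};\Z)$, completing the proof.

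I expect the main obstacle to be the inductive lemma on free $C_2$-CW complexes: one needs to be careful about the equivariant cell structure, confirm that the attaching maps induce the expected long exact sequences compatible with the $\M$-module structure, and verify that $\A_0$ and its shifts are indeed $\rho$-torsion (rather than merely bounded in $\rho$-degree). Once that technical point is nailed down, the rest of the argument is formal.
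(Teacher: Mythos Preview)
The paper does not actually prove this lemma; it merely points to \cite[Lemma 4.3]{clover} for the $\underline{\Z/2}$ case and asserts the same method works here. Your reduction---collapse $X/X^{C_2}$, show free $C_2$-CW complexes have $\rho$-torsion Bredon cohomology by induction on cells, and conclude that restriction to the fixed set is an isomorphism after inverting $\rho$---is exactly the standard argument and is correct. The worry you flag about the inductive lemma is not a real obstacle: $\A_0\cong x^{-1}\M/(\rho)$ is literally killed by $\rho$, and the cell-by-cell induction goes through formally.

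The genuine gap is in your last paragraph. You assert the ``standard isomorphism'' $H^{*,*}(X^{C_2};\uZ)\cong \M\otimes_{\Z}H^*_{sing}(X^{C_2};\Z)$ via a collapsing cellular argument, but $\M$ is \emph{not} flat over $\Z$ (it has $2$-torsion: $2\rho=0$), so the K\"unneth/UCT identification of the cohomology of $C^*_{\mathrm{cell}}(X^{C_2};\Z)\otimes_{\Z}\M$ with $H^*_{sing}(X^{C_2};\Z)\otimes_{\Z}\M$ fails whenever $H^*_{sing}(X^{C_2};\Z)$ has $2$-torsion. Concretely, take $X^{C_2}=\R P^2$ with trivial action: the cellular complex $\M\xrightarrow{0}\M\xrightarrow{2}\M$ has $H^1=\ker(2\colon\M\to\M)\neq 0$, whereas $\M\otimes_{\Z}H^1_{sing}(\R P^2;\Z)=0$. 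Even after inverting $\rho$ this does not repair itself: since $2=0$ in $\rho^{-1}\M$, the localized cellular complex computes $H^*_{sing}(X^{C_2};\Z/2)\otimes_{\Z/2}\rho^{-1}\M$, which differs from $H^*_{sing}(X^{C_2};\Z)\otimes_{\Z}\rho^{-1}\M$ by Tor terms. So what your argument actually proves is
\[
\rho^{-1}H^{*,*}(X;\uZ)\;\cong\;\rho^{-1}\M\otimes_{\Z/2}H^*_{sing}(X^{C_2};\Z/2),
\]
and this agrees with the stated lemma only when $H^*_{sing}(X^{C_2};\Z)$ has no $2$-torsion. For every application in this paper that hypothesis holds (fixed sets of $C_2$-surfaces are unions of points and circles, and in Lemma~\ref{miscmani} only the vanishing range is used), but your write-up should not claim the unqualified tensor identity for trivial spaces.
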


We will often encounter spaces of the form $C_2\times X$ in our computations. The cohomology of such spaces can be easily computed:

\begin{lemma}\label{product}
Let $X$ be a finite $C_2$-CW complex. Then \[H^{*,*}(C_2\times X;\uZ) \cong \A_0\otimes_\Z H^{*}_{sing}(X;\Z).\] If $X$ is pointed, then \[\H^{*,*}(C_{2+}\wedge X;\uZ)\cong \A_0\otimes_\Z \H^{*}_{sing}(X;\Z).\]
\end{lemma}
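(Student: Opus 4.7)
The plan is to prove the pointed version $\tilde{H}^{*,*}(C_{2+}\wedge X;\uZ)\cong \A_0\otimes_\Z \tilde{H}^{*}_{sing}(X;\Z)$ by induction on the cell structure of $X$ (viewed as a pointed finite $C_2$-CW complex with trivial $C_2$-action); the unpointed statement then follows by applying the pointed version to $X_+$, using $C_{2+}\wedge X_+=(C_2\times X)_+$ together with $\tilde{H}^{*}_{sing}(X_+;\Z)=H^{*}_{sing}(X;\Z)$. The base case $X=S^0$ is immediate: $C_{2+}\wedge S^0=C_{2+}$, whose reduced cohomology is $\A_0=H^{*,*}(C_2;\uZ)$ from the introduction, while $\tilde{H}^{*}_{sing}(S^0;\Z)=\Z$ in degree zero, so both sides equal $\A_0$.

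For the inductive step, given a cofiber sequence $X'\hookrightarrow X\to \bigvee S^n$ attaching $n$-cells, smashing with $C_{2+}$ produces a cofiber sequence of free pointed $C_2$-spaces. I would compare the resulting Puppe long exact sequence in $\tilde{H}^{*,*}(-;\uZ)$ against the singular cohomology LES of $X'\hookrightarrow X\to \bigvee S^n$ tensored with $\A_0$, via a natural $\M$-linear map
\[\phi_X\colon \A_0\otimes_\Z \tilde{H}^{*}_{sing}(X;\Z)\too \tilde{H}^{*,*}(C_{2+}\wedge X;\uZ).\]
Tensoring with $\A_0$ preserves exactness since $\A_0\cong F_0\oplus \Sigma^{0,1}F_0$ is $\Z$-torsion-free (in fact, a free $\Z$-module of rank one) in each bidegree. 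To construct $\phi_X$, one can use the quotient lemma (Lemma \ref{quotient}) to identify $\tilde{H}^{p,0}(C_{2+}\wedge X;\uZ)\cong \tilde{H}^{p}_{sing}((C_{2+}\wedge X)/C_2;\Z)=\tilde{H}^{p}_{sing}(X;\Z)$ and then extend to other weights through the $\A_0$-module structure. For the wedge of spheres, the $RO(C_2)$-suspension isomorphism applied to the base case gives $\tilde{H}^{*,*}(C_{2+}\wedge \bigvee S^n;\uZ)\cong (\Sigma^{n,0}\A_0)^{\oplus k}$, matching the right side summand by summand, so the five lemma finishes the induction.

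The main obstacle is verifying naturality of $\phi_X$ with respect to the connecting homomorphisms in the two LESes. An alternative, more self-contained route avoids this by first observing that $\rho$ acts as zero on $\tilde{H}^{*,*}(C_{2+}\wedge X;\uZ)$: the representing map $\rho\wedge\mathrm{id}\colon C_{2+}\to C_{2+}\wedge S^{1,1}$ is $C_2$-equivariantly pointed null-homotopic, since under the $C_2$-equivariant homotopy equivalence $C_{2+}\wedge S^{1,1}\simeq \Sigma^{1,0}C_{2+}$ the map becomes an inclusion of two free points into two wedged circles that is contractible to the basepoint along $C_2$-paired paths. Once $\rho=0$ is established, the forgetful long exact sequence (Lemma \ref{fles}) breaks into short exact sequences \[0\to \tilde{H}^{p,q+1}(C_{2+}\wedge X;\uZ)\xrightarrow{\psi}\tilde{H}^{p}_{sing}(C_{2+}\wedge X;\Z)\to \tilde{H}^{p,q}(C_{2+}\wedge X;\uZ)\to 0,\] which, combined with the quotient lemma at weight $q=0$ and the computation $\tilde{H}^{*}_{sing}(C_{2+}\wedge X;\Z)=\tilde{H}^{*}_{sing}(X\vee X;\Z)=\tilde{H}^{*}_{sing}(X;\Z)^{\oplus 2}$, allows an induction on $|q|$ to identify $\tilde{H}^{p,q}(C_{2+}\wedge X;\uZ)\cong \tilde{H}^{p}_{sing}(X;\Z)$ in every weight.
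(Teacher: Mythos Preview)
The paper states this lemma without proof, so there is nothing to compare against directly; I will simply assess your argument.

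Your first approach is standard and would work once the naturality of $\phi_X$ is pinned down; the cleanest way to do this is not via the quotient lemma but via the induction--restriction adjunction $[C_{2+}\wedge X,\Sigma^{p,q}H\uZ]^{C_2}\cong [X,\Sigma^{p,q}H\uZ]^{e}\cong \tilde H^{p}_{sing}(X;\Z)$, which is manifestly natural and immediately gives the bigraded abelian group statement. The $\M$-module structure (that $\rho$ acts by $0$ and $x$ by an isomorphism) then follows from the case $X=S^0$ together with this naturality.

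Your second approach has a genuine gap. Once $\rho=0$ you correctly obtain short exact sequences
\[
0\to \tilde H^{p,q+1}(C_{2+}\wedge X)\xrightarrow{\psi}\tilde H^{p}_{sing}(X)^{\oplus 2}\to \tilde H^{p,q}(C_{2+}\wedge X)\to 0,
\]
but an induction on $|q|$ does \emph{not} by itself identify the outer terms with $\tilde H^{p}_{sing}(X)$: knowing that one end is abstractly isomorphic to $A=\tilde H^{p}_{sing}(X)$ tells you only that the other end is the kernel (or cokernel) of \emph{some} map between $A^{\oplus 2}$ and $A$, and for general $A$ such a kernel need not be isomorphic to $A$. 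You would have to identify the maps $\psi$ and the transfer at every weight, not just at $q=0$, and your write-up does not do this. A quick repair that stays in the spirit of your second approach: use the shearing homeomorphism $C_{2+}\wedge S^{1,1}\cong C_{2+}\wedge S^{1,0}$ to obtain an equivariant equivalence $\Sigma^{1,1}(C_{2+}\wedge X)\simeq \Sigma^{1,0}(C_{2+}\wedge X)$, hence a natural isomorphism $\tilde H^{p,q}(C_{2+}\wedge X)\cong \tilde H^{p,q-1}(C_{2+}\wedge X)$ for all $q$; combined with the quotient lemma at $q=0$ this gives the groups immediately, and the $\M$-module structure follows.
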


Lastly, we state a geometric fact that will be helpful in many computations. This was proven in \cite{Haz20} in the proof of Theorem 6.6, with the key step occurring in Figure 23.

\begin{lemma}\label{pinched}
Suppose $X$ is a nontrivial, nonfree, $C_2$-surface. Let $\tilde{X}$ denote the cofiber of the map $C_2\hookrightarrow X$. Then $\tilde{X}\simeq X\vee S^{1,1}$. In particular, $\H^{*,*}(\tilde{X})\cong \H^{*,*}(X) \oplus \Sigma^{1,1}\M$.
\end{lemma}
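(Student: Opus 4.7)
The plan is to use an equivariant arc inside $X$ joining the two points of the free orbit through a fixed point, and then collapse it. Since the action is nonfree there is a fixed point $p \in X^{C_2}$, and since the action is nontrivial the inclusion $C_2 \hookrightarrow X$ lands on a free orbit $\{x_0, x_1\}$. Because $X$ is a connected PL $C_2$-surface, I first construct a $C_2$-equivariant PL-embedded arc $A \subset X$ from $x_0$ to $x_1$ with $p$ as its midpoint on which $C_2$ acts by reflection: pick a PL-embedded path $\gamma$ from $x_0$ to $p$ whose $C_2$-translate $\bar\gamma$ meets $\gamma$ only at $p$, and set $A := \gamma \cup \bar\gamma$.

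Next, I model $\tilde X$ concretely as the pushout $X \cup_{\{x_0,x_1\}} CC_2$, where $CC_2$ is the cone on the orbit: a ``V'' whose vertex is a fixed point and serves as the basepoint of $\tilde X$. Inside $\tilde X$, the union $B := A \cup CC_2$ is a $C_2$-invariant topological circle with exactly two fixed points (the midpoint $p$ of $A$ and the cone vertex) and with $C_2$ acting by reflection swapping its two halves, so $B \cong S^{1,1}$.

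Finally, I carry out the collapse. The inclusion $A \hookrightarrow \tilde X$ is a $C_2$-cofibration (equivariantly triangulate $X$ with $A$ as a subcomplex), and $A$ is $C_2$-equivariantly contractible to $p$ via the straight-line retraction (the fixed point $p$ is what makes this equivariant), so the quotient map $\tilde X \to \tilde X/A$ is a $C_2$-homotopy equivalence. In $\tilde X/A$ the two outer endpoints of $CC_2$ are identified with the image $\bar p$ of $A$, so the image of $CC_2$ becomes a $C_2$-loop, again $S^{1,1}$, attached to the remainder only at $\bar p$. Therefore
\[
\tilde X \;\simeq\; \tilde X/A \;=\; (X/A) \vee S^{1,1} \;\simeq\; X \vee S^{1,1},
\]
with the final equivalence because $A$ is equivariantly contractible inside $X$ as well. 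The cohomology statement then follows from the wedge formula and $\H^{*,*}(S^{1,1}) \cong \Sigma^{1,1}\M$. The main obstacle I expect is the construction of $A$: it requires a PL general-position argument to arrange $\gamma \cap \bar\gamma = \{p\}$, with slightly different local models near $p$ depending on whether $p$ is isolated (local model $D(\R^{2,2})$) or lies on a fixed circle (local model $D(\R^{2,1})$).
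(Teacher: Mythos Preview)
Your argument is correct, and the paper itself does not give a proof here---it cites \cite{Haz20}, where a geometric picture (Figure~23 there) carries the same idea of producing an equivariant arc through a fixed point and collapsing.

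One simplification worth noting: the embedded-arc and general-position step you flag as the main obstacle can be avoided entirely. The essential content is that the inclusion $C_2 \hookrightarrow X$ is equivariantly null-homotopic, since any path $\gamma$ from $x_0$ to a fixed point $p$ (together with its conjugate $\sigma\gamma$) gives an equivariant homotopy to the constant map at $p$. For a null-homotopic map $f\colon A \to X$ the unreduced mapping cone satisfies $\cof(f) \simeq X \vee \Sigma A$, and here $\Sigma C_2 \cong S^{1,1}$ (two arcs swapped, suspension points fixed). This yields $\tilde X \simeq X \vee S^{1,1}$ directly, without requiring $\gamma$ and $\sigma\gamma$ to be embedded or to meet only at $p$. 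Your explicit collapse is the hands-on version of the same fact, so there is no error---just more work than strictly necessary.
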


\subsection{Computational strategy}\label{compstat} Suppose we have a $C_2$-space $X$, and we would like to determine $H^{*,*}(X;\uZ)$ as an $\M$-module. Below is an outline for how we will approach such a computation:
\begin{itemize}
\item \emph{Find a cofiber sequence $A\to X\to Q$ where we know $H^{*,*}(A)$ and $H^{*,*}(Q)$.}
\item \emph{Determine the differential $d:\H^{*,*}(A) \to \H^{*+1,*}(Q)$.} This will be done by first determining what $d$ does to a set of generators for the module $\H^{*,*}(A)$. To do so, we might use the quotient lemma, the forgetful long exact sequence, data from another cofiber sequence with $X$, or facts about $H^{*,*}(X;\underline{\Z/2})$. Once we know how $d$ behaves on generators, we can use the module structure to determine the map.
\item \emph{Solve the extension problem $0 \to \coker(d) \to H^{*,*}(X) \to \ker(d)\to 0$.} This step is often nontrivial, and we will use similar techniques as in the previous bullet point. \end{itemize}

We first employ this strategy in full force in the proof of Lemma \ref{x2ncohom}. We end this section with an example of $\M$-modules that will appear later in the paper.

\begin{example} Let $S^n_a$ denote the $C_2$-sphere whose underlying space is $S^n$ and whose $C_2$-action is given by the antipodal map. Note $S^0_a$ is just the free orbit $C_2$. Write $\A_n$ for the module $H^{*,*}(S^n_a;\uZ)$. 

Observe we can include $S^{n-1}_a$ into $S^n_a$ as the equator. The quotient will be a wedge of two $n$-spheres where the $C_2$-action swaps the two copies. Thus we have a cofiber sequence
\[
S^{n-1}_a \hookrightarrow S^n_a \to C_{2+}\wedge S^{n}.
\] 
Using this cofiber sequence, a standard inductive computation shows that
\[
\A_n=H^{*,*}(S^n_a;\uZ) \cong x^{-1}\M/(\rho^{n+1}) \oplus \Sigma^{n,\epsilon} x^{-1}\M/(\rho), 
\]
where $\epsilon=0$ if $n$ is odd and $\epsilon=1$ if $n$ is even. See Figure \ref{fig:antisphere} for the abbreviated pictures of the modules $\A_0$, $\A_1$, $\A_2$, and $\A_3$. As usual, we omit the $x$-action from the picture since it is an isomorphism in all bidegrees.

To see why the answer should depend on the parity of $n$, observe the quotient $S^n_a/C_2$ is the space $\R P^n$. Hence the groups on the $p$-axis are $H^p_{sing}(\R P^n;\Z)$ by the quotient lemma. Recall the orientability of $\R P^n$ depends on the parity of $n$, which helps explain the role of $\epsilon$ in the above answer. 

\begin{figure}[ht]
	\begin{tikzpicture}[scale=.45]
	\draw[help lines,gray] (-1.125,-5.125) grid (2.125, 5.125);
	\draw[<->] (-1,0)--(2,0)node[right]{$p$};
	\draw[<->] (0,-5)--(0,5)node[above]{$q$};
	\zline{0}{black};
	\end{tikzpicture}\hspace{0.1in}
	\begin{tikzpicture}[scale=.45]
	\draw[help lines,gray] (-1.125,-5.125) grid (3.125, 5.125);
	\draw[<->] (-1,0)--(3,0)node[right]{$p$};
	\draw[<->] (0,-5)--(0,5)node[above]{$q$};
	\zantizo{0}{black};
	\end{tikzpicture}\hspace{0.1in}
	\begin{tikzpicture}[scale=.45]
	\draw[help lines,gray] (-1.125,-5.125) grid (4.125, 5.125);
	\draw[<->] (-1,0)--(4,0)node[right]{$p$};
	\draw[<->] (0,-5)--(0,5)node[above]{$q$};
	\zanti{0}{0}{black}{2};
	\draw[thick] (0.5,4.5)--(1,5);
	\zbox{0}{4}{black};
	\draw[thick] (2.5,-3.5) node{\small{$\bullet$}}--(1,-5);
	\foreach \y in {3,1,-1,-3,-5}
		\draw (1.5,\y+.5) node{\small{$\bullet$}};
	\zbox{2}{-5}{black};
	\end{tikzpicture}\hspace{0.1in}
	\begin{tikzpicture}[scale=.45]
	\draw[help lines,gray] (-1.125,-5.125) grid (5.125, 5.125);
	\draw[<->] (-1,0)--(5,0)node[right]{$p$};
	\draw[<->] (0,-5)--(0,5)node[above]{$q$};
	\foreach \y in {-4,-2,0}
		{
		\draw[thick] (0.5, \y+.5)--(3.5,\y+3.5) node{\small{$\bullet$}};
		\zbox{0}{\y}{black};
		\draw (1.5,\y+1.5) node{\small{$\bullet$}};
		\draw (2.5,\y+2.5) node{\small{$\bullet$}};
		};
	\draw[thick] (0.5,2.5)--(3,5);
	\zbox{0}{2}{black};
	\draw (1.5,3.5) node{\small{$\bullet$}};
	\draw (2.5,4.5) node{\small{$\bullet$}};
	\draw[thick] (0.5,4.5)--(1,5);
	\zbox{0}{4}{black};
	\foreach \y in {4,2,0,-2,-4}
		\zbox{3}{\y}{black};
	\draw[thick] (1,-5)--(3.5,-2.5) node{\small{$\bullet$}};
	\draw[thick] (3,-5)--(3.5,-4.5) node{\small{$\bullet$}};
	\draw (1.5,-4.5) node{\small{$\bullet$}};
	\draw (2.5,-3.5) node{\small{$\bullet$}};
	\end{tikzpicture}
\caption{The modules $\A_0$, $\A_1$, $\A_2$ and $\A_3$.}
\label{fig:antisphere}
\end{figure}
\end{example}

\section{The family of $\M$-modules $\D_{4n}$}
\label{sec:somemods}
In this section, we introduce a family of $\M$-modules and prove there is a corresponding family of $C_2$-CW complexes whose cohomology is given by these modules. Only one of the spaces will be a $C_2$-surface, but it is no more difficult to compute the cohomology of a general member of this family than it is for the one surface example. This section serves to introduce some interesting modules, to illustrate the computational techniques described in the previous section, and to separate out one of the more complicated $C_2$-surface computations from Section \ref{sec:nonfreecomp}. \smallskip

Define the $\M$-module $\D_{4n}$ for $n>0$ using generators and relations as 
\[\D_{4n}=\M\langle \beta, \alpha_0,\alpha_1,\alpha_2,\dots\rangle/(x\alpha_{i}=\alpha_{i-1}, \rho\alpha_0=2n\beta, \rho\alpha_i=(\sfrac{\theta}{x^{i-1}})n\beta),\]
\[|\beta|=(1,1),\quad \quad |\alpha_i|=(0,-2i).\]
The subscript $4n$ is used because 
\[4n\cdot\beta=2(2n\cdot\beta)=2(\rho\cdot \alpha_0)=0,\]
and $\beta$ generates a cyclic group of order $4n$ in bidegrees $(1,2k+1)$ for $k\geq 0$. We depict the module $\D_{4n}$ in Figure \ref{fig:d4n} with some of the generator labels. The letter $k$ inside a circle is used to denote the group $\Z/k$. We again omit the $x$-connections that are isomorphisms from the picture. The one dashed line shown from bidegree $(1,-1)$ to $(1,1)$ denotes the inclusion $\Z/2n\hookrightarrow \Z/4n$. An abbreviated version of the module is shown on the right.  

\begin{figure}[ht]
\begin{tikzpicture}[scale=.5]
\draw[help lines,gray] (-5.125,-5.125) grid (5.125, 5.125);
\draw[<->] (-5,0)--(5,0)node[right]{$p$};
\draw[<->] (0,-5)--(0,5)node[above]{$q$};
	\draw[thick,dashed] (1+1/2,1+1/2) arc (145:216:1.7);
	\foreach\y in {0,2,4}
	\draw[thick] (0.5,\y+.5)--(5-\y,5);
	\foreach\y in {-2,-4}
	\draw[thick] (0.5,\y+.5)--(1.5,\y+1.5);
	\draw[thick] (1.5,-4.5)--(1,-5);
	\foreach \y in {-4,-2,0,2,4}
	{\zbox{0}{\y}{black};
	};
	\foreach \y in {1,3}
	{\draw[fill=white](1.5,\y+.5) circle (.27cm);
	\draw (1.52,\y+.51) node{\scalebox{.55}{$4n$}};
	};
	\foreach \y in {-1,-3,-5}
	{\draw[fill=white](1.5,\y+.5) circle (.27cm);
	\draw (1.52,\y+.51) node{\scalebox{.55}{$2n$}};
	};
	\draw[thick] (1.5,-1.5)node{\dot}--(-2,-5);
	\draw[thick] (1.5,-3.5)node{\dot}--(0,-5);
	\foreach \y in {2,4}
		\draw (2.5,\y+.5) node{\dot};
	\foreach \y in {3}
		\draw(3.5,\y+.5) node{\dot};
	\foreach \y in {4}
		\draw(4.5,\y+.5) node{\dot};
	\foreach \y in {-5,-3}
		\draw(.5,\y+.5) node{\dot};
	\draw(-.5,-3.5) node{\dot};
	\draw(-1.5,-4.5) node{\dot};
	\draw (1.85,1.15) node{\tiny{$\beta$}};
	\draw (1.9,3.1) node{\tiny{$x\beta$}};
	\draw (2.8,2.2) node{\tiny{$\rho\beta$}};
	\draw (3.85,3.2) node{\tiny{$\rho^2\beta$}};
	\draw (2.8,4.2) node{\tiny{$\rho x\beta$}};
	\draw (2,-.85) node{\tiny{$\theta\beta$}};
	\draw (2.5,-2.8) node{\tiny{$\sfrac{\theta}{x}\cdot \beta$}};
	\draw (2.3,-3.75) node{\tiny{$\sfrac{\mu}{x}\cdot \beta$}};
	\draw (2,-1.7) node{\tiny{$\mu\beta$}};
	\draw (0.5,0.15) node{\tiny{$\alpha_0$}};
	\draw (0.5,2.15) node{\tiny{$x\alpha_0$}};
	\draw (0.6,4) node{\tiny{$x^2\alpha_0$}};
	\draw (.5,-1.85) node{\tiny{$\alpha_1$}};
	\draw (.5,-3.85) node{\tiny{$\alpha_2$}};
\end{tikzpicture}
\begin{tikzpicture}[scale=.5]
\draw[help lines,gray] (-5.125,-5.125) grid (5.125, 5.125);
\draw[<->] (-5,0)--(5,0)node[right]{$p$};
\draw[<->] (0,-5)--(0,5)node[above]{$q$};
	\draw[thick,dashed] (1+1/2,1+1/2) arc (145:216:1.7);
	\foreach\y in {0,2,4}
	\draw[thick] (0.5,\y+.5)--(5-\y,5);
	\foreach\y in {-2,-4}
	\draw[thick] (0.5,\y+.5)--(1.5,\y+1.5);
	\draw[thick] (1.5,-4.5)--(1,-5);
	\foreach \y in {-4,-2,0,2,4}
	{\zbox{0}{\y}{black};
	};
	\foreach \y in {1,3}
	{\draw[fill=white](1.5,\y+.5) circle (.27cm);
	\draw (1.52,\y+.51) node{\scalebox{.55}{$4n$}};
	};
	\foreach \y in {-1,-3,-5}
	{\draw[fill=white](1.5,\y+.5) circle (.27cm);
	\draw (1.52,\y+.51) node{\scalebox{.55}{$2n$}};
	};
	\draw[thick] (1.5,-1.5)node{\dot}--(-2,-5);
	\draw[thick] (1.5,-3.5)node{\dot}--(0,-5);
\end{tikzpicture}
\caption{The $\M$-module $\D_{4n}$.}
\label{fig:d4n}
\end{figure}

It is not obvious that the module $\D_{4n}$ defined using generators and relations is isomorphic to the illustrated module. But it is straightforward to see that $\D_{4n}$ surjects onto the module drawn, and one can check this gives an isomorphism by just considering all possible elements in each bidegree and checking that nothing nonzero is in the kernel. For example, the elements in bidegree $(1,-3)$ are $(\sfrac{\mu}{\rho})\beta$ and $\mu \alpha_0$. By construction, $\rho \alpha_0 = 2n\beta$ and so
\[
\mu \alpha_0 = (\sfrac{\mu}{\rho}\cdot  \rho) \alpha_0 = \sfrac{\mu}{\rho}\cdot 2n\beta = 0
\]
because $\sfrac{\mu}{\rho}$ is $2$-torsion. Thus $(\sfrac{\mu}{\rho})\beta$ is the only nonzero element in $(1,-3)$, and the map is injective in this bidegree. We leave the rest of the details to the reader.

We now construct a $C_2$-space $X_{2n}$ such that the cohomology of $X_{2n}$ is a shift of $\D_{4n}$. For any $k>1$ we construct a space $X_k$ as follows. Start with the cylinder $S^{1,1}\times D(\R^{1,1})$ where recall $D(\R^{1,1})$ is the closed unit disk in $\R^{1,1}$. Quotient the boundary circle $S^{1,1} \times \{-1\}$ using the identifications given by the degree $k$ map $S^{1}\to S^{1}$. Make the corresponding identifications in the conjugate circle $S^{1,1}\times \{1\}$ according to the $C_2$-action on the cylinder. Denote the resulting space by $X_k$. An illustration of $X_{3}$ is shown in Figure \ref{fig:x3}. The picture on the left is the underlying space whereas the picture on the right shows the $C_2$-action. The fixed set is $S^0$. Note $X_{2}$ is homeomorphic to a Klein bottle. (Using the surgery constructions described in the next section, $X_2$ can also be described as the double connected sum $S^{2,2}\#_2 \R P^2$.)

\begin{figure}[ht]
\begin{tikzpicture}[scale=0.8]
\draw[
        decoration={markings, mark=at position 0 with {\arrow{>}}, mark=at position 0.151 with {\arrow[scale=.5]{*}}, mark=at position 0.333 with {\arrow{>}}, mark=at position 0.484 with {\arrow[scale=.5]{*}}, mark=at position 0.666 with {\arrow{>}},mark=at position 0.817 with {\arrow[scale=.5]{*}}}, 
        postaction={decorate}
        ] (0,1) ellipse (.4cm and 1cm);
\draw[
        decoration={markings, mark=at position 0.166 with {\arrow{<<}}, mark=at position 0.317 with {\arrow[scale=.5]{*}}, mark=at position 0.5 with {\arrow{<<}}, mark=at position 0.651 with {\arrow[scale=.5]{*}}, mark=at position 0.833 with {\arrow{<<}}, mark=at position 0.984 with {\arrow[scale=.5]{*}}}, 
        postaction={decorate}
        ]
 (4,1) ellipse (.4cm and 1cm);
\draw[thick,fill=white, white] (3.5,0.03) rectangle (4,1.98);
\draw[dashed, 
        decoration={markings, mark=at position 0.166 with {\arrow{<<}}, mark=at position 0.317 with {\arrow[scale=.5]{*}}, mark=at position 0.5 with {\arrow{<<}}, mark=at position 0.651 with {\arrow[scale=.5]{*}}, mark=at position 0.833 with {\arrow{<<}}, mark=at position 0.984 with {\arrow[scale=.5]{*}}},
        postaction={decorate}
        ] (4,1) ellipse (.4cm and 1cm);
\draw(0,0)--(4,0);
\draw(0,2)--(4,2);
\draw[white] (2,3) node {$\bullet$};
\draw[white] (2,-.8) node {$\bullet$};
	\end{tikzpicture}\hspace{.5in}
	\begin{tikzpicture}[scale=0.8]
\draw[
        decoration={markings, mark=at position 0 with {\arrow{>}}, mark=at position 0.151 with {\arrow[scale=.5]{*}}, mark=at position 0.333 with {\arrow{>}}, mark=at position 0.484 with {\arrow[scale=.5]{*}}, mark=at position 0.666 with {\arrow{>}},mark=at position 0.817 with {\arrow[scale=.5]{*}}}, 
        postaction={decorate}
        ] (0,1) ellipse (.4cm and 1cm);
\draw[
        decoration={markings, mark=at position 0.166 with {\arrow{<<}}, mark=at position 0.317 with {\arrow[scale=.5]{*}}, mark=at position 0.5 with {\arrow{<<}}, mark=at position 0.651 with {\arrow[scale=.5]{*}}, mark=at position 0.833 with {\arrow{<<}}, mark=at position 0.984 with {\arrow[scale=.5]{*}}}, 
        postaction={decorate}
        ]
 (4,1) ellipse (.4cm and 1cm);
\draw[thick,fill=white, white] (3.5,0.03) rectangle (4,1.98);
\draw[dashed, 
        decoration={markings, mark=at position 0.166 with {\arrow{<<}}, mark=at position 0.317 with {\arrow[scale=.5]{*}}, mark=at position 0.5 with {\arrow{<<}}, mark=at position 0.651 with {\arrow[scale=.5]{*}}, mark=at position 0.833 with {\arrow{<<}}, mark=at position 0.984 with {\arrow[scale=.5]{*}}},
        postaction={decorate}
        ] (4,1) ellipse (.4cm and 1cm);
\draw(0,0)--(4,0);
\draw(0,2)--(4,2);
\draw[dashed] (2,3)--(2,2);
\draw[dashed] (2,2)--(2,0);
\draw[dashed] (2,0)--(2,-1);
\draw[blue](2,0) node {\dot};
\draw[blue](2,2) node {\dot};
\draw[->] (2.13,2.5) arc (-40:220:.17cm);
\draw (2.6,2.6) node{\tiny{$180^\circ$}};
\end{tikzpicture}
\caption{The space $X_{3}$.}
\label{fig:x3}
\end{figure}
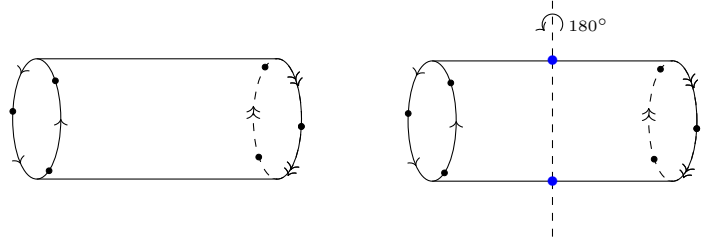

\begin{lemma}\label{x2ncohom} Let $X_{2n}$ be defined as above. As an $\M$-module $\H(X_{2n})\cong \Sigma^{1,1}\D_{4n}$.
\end{lemma}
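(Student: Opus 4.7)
The plan is to apply the cofiber-sequence strategy from Section \ref{compstat} to the inclusion of the middle slice $S^{1,1}\hookrightarrow X_{2n}$ coming from $S^{1,1}\times\{0\}$. Collapsing this middle circle separates $X_{2n}$ into two conjugate halves, each homotopy equivalent to the Moore space $\tilde M_{2n}:=M(\Z/(2n),1)$, realized as $D(\R^{1,1})$ with boundary identified via the degree $2n$ map. The two halves are swapped by $C_2$, so $X_{2n}/S^{1,1}\simeq C_{2+}\wedge\tilde M_{2n}$, and Lemma \ref{product} yields
\[
\H^{*,*}(X_{2n}/S^{1,1})\cong\A_0\otimes_\Z\tilde H^*_{sing}(\tilde M_{2n}),
\]
concentrated on the line $p=2$ with $\Z/(2n)$ in every weight. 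Together with $\H^{*,*}(S^{1,1})=\Sigma^{1,1}\M$, this sets up a long exact sequence with $\M$-linear differential $d\colon\Sigma^{1,1}\M\to\H^{*+1,*}(X_{2n}/S^{1,1})$.

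Next I would pin down $d$. Since the source is generated by a class $u$ at $(1,1)$, only $d(u)\in\H^{2,1}(X_{2n}/S^{1,1})\cong\Z/(2n)$ matters. A direct CW-complex computation gives $H_1(X_{2n};\Z)\cong\Z\oplus\Z/(2n)$ in which the middle circle represents $2n$ times a generator of the free summand. Hence the underlying non-equivariant connecting map $\delta_{\mathrm{sing}}\colon\tilde H^1(S^1)\to\tilde H^2(X_{2n}/S^1)\cong(\Z/(2n))^2$ is nonzero with image a diagonal $\Z/(2n)$. Since the forgetful map $\psi$ of Lemma \ref{fles} on $\A_0\otimes\Z/(2n)$ sends each weight diagonally into $(\Z/(2n))^2$, naturality forces $d(u)$ to be a generator of $\Z/(2n)$. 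By $\M$-linearity and invertibility of $x$ on the target, $d$ is then surjective in every bidegree with nonzero source.

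Assembling the short exact sequences recovers $\H^{p,q}(X_{2n})$ matching $\Sigma^{1,1}\D_{4n}$ in every bidegree, except for a family of potential extensions
\[
0\to\Z/(2n)\to\H^{2,q}(X_{2n})\to\Z/2\to 0,\qquad q\text{ even},\ q\geq 2,
\]
coming from the kernel $\M_{(1,q-1)}=\Z/2$ together with the cokernel $\Z/(2n)$.

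The main obstacle will be showing each of these extensions is non-split, giving the cyclic group $\Z/(4n)$ demanded by $\D_{4n}$. The plan is to invoke the Bockstein long exact sequence for the short exact sequence of Mackey functors $0\to\uZ\xrightarrow{2}\uZ\to\underline{\Z/2}\to 0$. First I would compute $\H^{*,*}(X_{2n};\underline{\Z/2})$ by the identical cofiber-sequence strategy, now using $\tilde H^*_{sing}(\tilde M_{2n};\Z/2)\cong\Z/2$ in both degrees $1$ and $2$. Because the cofiber-sequence computation also shows $\H^{3,q}(X_{2n};\uZ)=0$ in the relevant bidegrees (both the cokernel from $Q$ and the kernel from $\Sigma^{1,1}\M$ vanish there), the Bockstein at those bidegrees is zero, so mod-$2$ reduction surjects $\H^{2,q}(X_{2n};\uZ)\twoheadrightarrow\H^{2,q}(X_{2n};\underline{\Z/2})$ with kernel equal to the image of multiplication by $2$. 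Comparing the order of $\H^{2,q}(X_{2n};\underline{\Z/2})$ with the two candidate groups shows that the cokernel of $\times 2$ must be $\Z/2$ rather than $(\Z/2)^{\oplus 2}$; this rules out $\Z/(2n)\oplus\Z/2$ and forces the cyclic extension $\Z/(4n)$. Applying this at every even $q\geq 2$ completes the identification $\H^{*,*}(X_{2n})\cong\Sigma^{1,1}\D_{4n}$.
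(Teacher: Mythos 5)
Your strategy replaces both of the paper's cofiber sequences by a single one together with two auxiliary tools: the forgetful map to singular cohomology (to pin down the differential) and the Bockstein against $\underline{\Z/2}$ (to resolve the group extensions). This is a genuinely different route. The paper instead uses a \emph{first} cofiber sequence $(C_2\times Y_{2n}')_+\hookrightarrow X_{2n+}\to\tilde S^{2,2}$ to deduce outright that $\H^{2,2k}(X_{2n})\cong\Z/4n$ for $k\geq 1$ (and $\H^{2,2k-1}=0$, which is then fed into the second sequence to force $d^{1,1}$ to be onto). Your naturality argument for $d^{1,1}$ is sound once one notes that the image of $\psi$ on $\A_0\otimes\Z/2n$ is the diagonal and hence $\operatorname{im}(\delta_{\mathrm{sing}})$ must lie in it; and your Bockstein step works because the cofiber sequence gives $\H^{3,q}(X_{2n};\uZ)=0$ for even $q\geq 2$, so mod-$2$ reduction is onto with kernel $2\H^{2,q}$, and the $\underline{\Z/2}$-computation returns $\Z/2$ there. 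So you do correctly recover the bigraded abelian groups.

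The gap is that determining the bigraded abelian groups and seeing each $\H^{2,q}$ is cyclic does \emph{not} complete the identification of $\H^{*,*}(X_{2n})$ as an $\M$-module, which is what the lemma asserts. Knowing $E_{2,2}\cong\Z/4n$ together with the short exact sequence $0\to\coker(d)\to E\to\ker(d)\to 0$ leaves the $\rho$-action from $\H^{1,1}\cong\Z$ into $\H^{2,2}\cong\Z/4n$ undetermined: on both the sub and the quotient the relevant $\rho$-map vanishes (in $\coker(d)\cong\A_0\otimes\Z/2n$ the action of $\rho$ is zero, and in $\ker(d)\subset\Sigma^{1,1}\M$ the generator of $\H^{1,1}$ is $2n$ times the module generator, so $\rho$ kills it), so the diagram chase alone permits either $\rho\mapsto 0$ or $\rho\mapsto 2n$. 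Likewise the actions of $\mu$ and $\mu/(\rho^i x^j)$ on the generator $\beta$ in bidegree $(2,2)$ are not forced. The paper spends a paragraph each on these: a forgetful long exact sequence argument shows $\rho$ acts nontrivially, and a separate computation (using $\mu x=0$ and the relation $\sfrac{\mu}{\rho}\cdot\beta$ from the kernel) pins down $\mu$. You would need to supply exactly this analysis, and it is not automatic from your $\underline{\Z/2}$-comparison, since $\rho\cdot\beta=2n$ lies in the kernel of mod-$2$ reduction and hence is invisible there. Until that is done, the proof establishes only a bigraded group isomorphism, not the claimed $\M$-module isomorphism. (A small notational slip: each half of $X_{2n}$ after collapsing $S^{1,1}$ is the quotient of a two-dimensional disk $D^2$, not of $D(\R^{1,1})$, which is a $1$-disk.)
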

\begin{proof} We make use of two cofiber sequences. We use the first to see $\H^{2,2k-1}(X_{2n})=0$ and $\H^{2,2k}(X_{2n})\cong \Z/4n$ when $k\geq 1$, and then use the second to finish the computation.

Denote by $Y_{k}$ the quotient of the disk $D^2$ where identifications in $\partial D^2$ have been made using the degree $k$ map, i.e. $Y_k\cong \cof(S^1 \overset{k}{\to}S^1)$. Let $Y_k'$ denote $Y_k$ with a small disk from the interior of $D^2$ removed. To form this first cofiber sequence, consider a closed neighborhood of the boundary circles in $ S^{1,1}\times D(\R^{1,1})$ that is homotopic to $C_2\times Y_{2n}'$ in $X_{2n}$. This neighborhood is shaded on $X_{2}$ in Figure \ref{fig:nbhdx2n}.

\begin{figure}[ht]
\begin{tikzpicture}[scale=0.8]
	\draw[fill, light-gray] (0.7,1) ellipse (.4cm and 1cm);
	\draw[fill, light-gray] (0,0)--(0.7,0)--(.7,2)--(0,2)--(0,0);
	\draw[fill, light-gray] (4,0)--(3,0)--(3,2)--(4,2)--(4,0);
	\draw[fill, white] (0,1) ellipse (.4cm and 1cm);
	\draw[fill, light-gray] (4,1) ellipse (.4cm and 1cm);
	\draw[fill, white] (3,1) ellipse (.4cm and 1cm);
\draw[
        decoration={markings, mark=at position 0 with {\arrow{>}}, mark=at position 0.25 with {\arrow[scale=.5]{*}}, mark=at position 0.5 with {\arrow{>}}, mark=at position 0.75 with {\arrow[scale=.5]{*}}}, 
        postaction={decorate}
        ] (0,1) ellipse (.4cm and 1cm);
\draw[
        decoration={markings, mark=at position 0 with {\arrow{<<}}, mark=at position 0.25 with {\arrow[scale=.5]{*}}, mark=at position 0.5 with {\arrow{<<}}, mark=at position 0.75 with {\arrow[scale=.5]{*}}}, 
        postaction={decorate}
        ]
 (4,1) ellipse (.4cm and 1cm);
\draw[thick,fill=light-gray, light-gray] (3.5,0.03) rectangle (4,1.98);
\draw[dashed, 
          decoration={markings, mark=at position 0 with {\arrow{<<}}, mark=at position 0.25 with {\arrow[scale=.5]{*}}, mark=at position 0.5 with {\arrow{<<}}, mark=at position 0.75 with {\arrow[scale=.5]{*}}}, 
        postaction={decorate}
        ] (4,1) ellipse (.4cm and 1cm);
\draw(0,0)--(4,0);
\draw(0,2)--(4,2);
\draw[dashed] (2,3)--(2,2);
\draw[dashed] (2,2)--(2,0);
\draw[dashed] (2,0)--(2,-1);
\draw[blue](2,0) node {\dot};
\draw[blue](2,2) node {\dot};
\draw[->] (2.13,2.5) arc (-40:220:.17cm);
\draw (2.6,2.6) node{\tiny{$180^\circ$}};
\end{tikzpicture}
\caption{The neighborhood in $X_{2}$.}
\label{fig:nbhdx2n}
\end{figure}
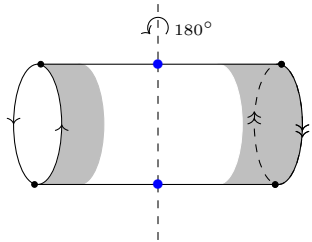
When we quotient by this neighborhood, the resulting $C_2$-space is a rotating sphere with two conjugate points identified. That is, we get the space $\tilde{S}^{2,2}=\cof(C_2\hookrightarrow S^{2,2})$. To summarize, we have the cofiber sequence
\begin{equation}\label{eq:x2ncofib1}
(C_2\times Y_{2n}')_+ \hookrightarrow X_{2n+} \to \tilde{S}^{2,2}.
\end{equation}

From Lemma \ref{pinched}, $\tilde{S}^{2,2}\simeq S^{2,2}\vee S^{1,1}$ and from Lemma \ref{product},
\[
H^{*,*}(C_2\times Y_{2n}')\cong \A_0\otimes H^{*}_{sing}(Y_{2n}') \cong \A_0 \oplus \Sigma^{1,0}\A_0.
\]
 The differential $d:\H^{*,*}((C_2\times Y_{2n}')_+)\to \H^{*+1,*}(\tilde{S}^{2,2})$ is shown in Figure \ref{fig:d4n_cof1}. We are interested in $d^{1,2k}$ for $k\geq 0$ in this sequence. 

\begin{figure}[ht]
\begin{tikzpicture}[scale=.45]
\draw[help lines,gray] (-5.125,-5.125) grid (5.125, 5.125);
\draw[<->] (-5,0)--(5,0)node[right]{$p$};
\draw[<->] (0,-5)--(0,5)node[above]{$q$};
	\zline{1.2}{red};
	\zline{0}{red};
	\zcone{2}{2}{blue};
	\zcone{.8}{1}{blue};
	\zbox{2}{-4}{blue};
	\draw[blue,thick] (2.5,-4.5)node{\dot}--(2,-5);
	\draw[thick, ->](1.8,.5)--(2.4,.5);
	\draw[thick, ->](1.8,2.5)--(2.4,2.5);
	\draw[thick, ->](1.8,4.5)--(2.4,4.5);
\end{tikzpicture}
\caption{$d:\H^{*,*}((C_2\times Y_{2n}')_+) \to \H^{*+1,*}(\tilde{S}^{2,2})$. }
\label{fig:d4n_cof1}
\end{figure}

Observe $X_{2n}/C_2\cong Y_{2n}$ and so $H^{2,0}(X_{2n})\cong H^{2}_{sing}(Y_{2n}) \cong \Z/2n$ by the quotient lemma. Thus $d^{1,0}$ must be multiplication by $2n$. It follows from the module structure that $d^{1,2k}$ for $k>0$ must be multiplication by $4n$. We conclude $\H^{2,2k}(X_{2n})\cong \Z/4n$ for $k\geq 1$. Also note $\H^{2,2k-1}(X_{2n})=0$.

We use a different cofiber sequence to finish the computation. The equator of the cylinder is isomorphic to $S^{1,1}$. Including the equator gives the cofiber sequence
\begin{equation}\label{eq:x2ncofib2}
S^{1,1}\hookrightarrow X_{2n} \to C_{2+}\wedge Y_{2n}.
\end{equation}
By Lemma \ref{product}, $\H^{*,*}(C_{2+}\wedge Y_{2n})\cong \A_0 \otimes \H^{*}_{sing}(Y_{2n})$.

The associated differential is shown in Figure \ref{fig:d4n_cof2}. We only need to determine $d^{1,1}$ since $\H^{*,*}(S^{1,1})$ is a free module. From the previous cofiber sequence, we know $\H^{2,1}(X_{2n})=0$ and so $d^{1,1}$ must be a surjection. We now solve the extension problem shown on the right in Figure \ref{fig:d4n_cof2}.
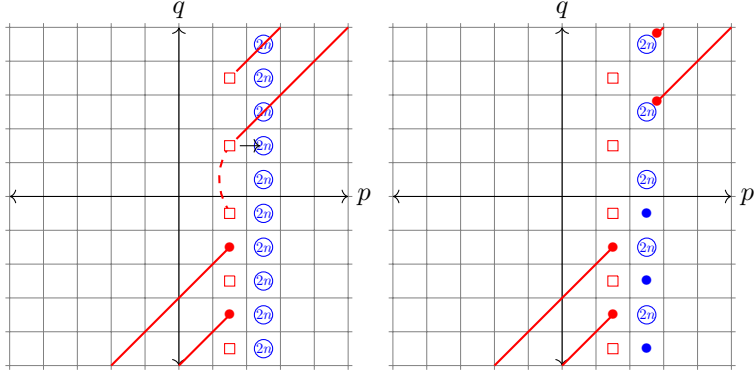
\begin{figure}[ht]
\begin{tikzpicture}[scale=.45]
\draw[help lines,gray] (-5.125,-5.125) grid (5.125, 5.125);
\draw[<->] (-5,0)--(5,0)node[right]{$p$};
\draw[<->] (0,-5)--(0,5)node[above]{$q$};
	\foreach \y in {-5,-4,-3,-2,-1,0,1,2,3,4}
	{\draw[blue,fill=white](2.5,\y+.5) circle (.27cm);
	\draw[blue] (2.52,\y+.51) node{\scalebox{.55}{$2n$}};};
	\zcone{1}{1}{red};
	\zbox{1}{-5}{red};
	\draw[->](1.8,1.5)--(2.4,1.5);
\end{tikzpicture}
\begin{tikzpicture}[scale=.45]
\draw[help lines,gray] (-5.125,-5.125) grid (5.125, 5.125);
\draw[<->] (-5,0)--(5,0)node[right]{$p$};
\draw[<->] (0,-5)--(0,5)node[above]{$q$};
	\foreach \y in {-4,-2,0,2,4}
	{\draw[blue,fill=white](2.5,\y+.5) circle (.27cm);
	\draw[blue] (2.52,\y+.51) node{\scalebox{.55}{$2n$}};};
	\foreach \y in{-1,-3,-5}
	\draw[blue] (2.5,\y+.5)node{\dot};
	\foreach \y in {-5,-3,-1,1,3}
	\zbox{1}{\y}{red};
	\draw[red,thick] (1.5,-1.5)node{\dot}--(-2,-5);
	\draw[red,thick] (1.5,-3.5)node{\dot}--(0,-5);
	\draw[red,thick] (2.8,2.8)node{\dot}--(5,5);
	\draw[red,thick] (2.8,4.8)node{\dot}--(3,5);
\end{tikzpicture}
\caption{The differential $d:\H^{*,*}(S^{1,1})\to \H^{*+1,*}(C_{2+} \wedge Y_{2n})$.}
\label{fig:d4n_cof2}
\end{figure}

From the previous cofiber sequence, we have that $\H^{2,2k+1}(X_{2n})\cong \Z/4n$. This solves the extension problem when regarded as bigraded abelian groups. The action of $x^j$ and $\sfrac{\theta}{x^j}$ are then determined from the actions in the cokernel and kernel. We next consider possible $\rho$-extensions.

To determine how $\rho$ acts when $p=1$, consider the following portion of the forgetful long exact sequence:
\[ H^{1,q}(X) \overset{\rho}{\to} H^{2,q+1}(X) \overset{\psi}{\to} H^{2}_{sing}(X) \to H^{2,q}(X) \to H^{3,q+1}(X).\]
Observe $H^{2}_{sing}(X)\cong \Z/2n$. When $q=2k+1$ for $k\geq 0$, this sequence is
\[
\Z \overset{\rho}{\to} \Z/4n \overset{\psi}{\to}  \Z/2n \to 0 \to 0.
\]
Thus $\rho$ must act nontrivially. When $q=2k+1$ for $k\leq -1$, this gives us
\[
\Z \overset{\rho}{\to} \Z/2n \overset{\psi}{\to}  \Z/2n \to \Z/2 \to 0.
\]
Thus $\rho$ must act nontrivially. Lastly, when $q=2k$ for $k\leq -1$, we have
\[
\Z/2 \overset{\rho}{\to} \Z/2 \overset{\psi}{\to}  \Z/2n \to \Z/2n \to 0,
\]
which again implies $\rho$ acts nontrivially. 

We now know how $\rho$, $x$, and $\sfrac{\theta}{x^j}$ act. It remains to consider $\sfrac{\mu}{(\rho^ix^j)}$. We explain how to do this for $\mu$, noting the argument for the other classes is similar. Recall $\mu x = 0$, so $\mu$ acts trivially on anything in the image of $x$. All elements are in the image of $x$ except the generator of $H^{2,2}(X)$. Call this generator $\beta$. It follows from relations in the kernel that $(\sfrac{\mu}{\rho})\cdot \beta$ is the generator in bidegree $(1, -2)$. Thus 
\[\mu\cdot \beta = \rho \cdot (\sfrac{\mu}{\rho} \cdot \beta),\]
which is the nonzero class in $(2,-1)$ from our analysis of the $\rho$-action. We can do a similar analysis to determine $\sfrac{\mu}{(\rho^ix^j)}$ acts nontrivially on $\beta$. We conclude the extension problem is solved by the module $\Sigma^{1,1}\D_4$.
\end{proof}

\begin{rmk} We can also compute the cohomology of $X_{2n+1}$. It is straightforward to determine $\H^{*,*}(X_{2n+1})\cong \Sigma^{1,1}\M \oplus \Sigma^{2,0}\M/(2n+1)$.
\end{rmk}

\section{A top cohomology class for general $C_2$-manifolds}
\label{sec:sphereimage}
We now consider the cohomology of $C_2$-manifolds of any dimension and prove a fact that will be useful in the surface computations. Non-equivariantly, recall if $M$ is an $n$-manifold and $x\in M$ is a point contained in an open $n$-disk $D\subset M$, then we get a quotient map $M\to M/(M-D)\cong S^n$. If $M$ is orientable, then this induces an isomorphism 
\[\Z\cong \H^n_{sing}(S^n)\to \H^n_{sing}(M).\] If $M$ is nonorientable, then $H^{n}_{sing}(M)\cong \Z/2$ and the induced map is surjective. 

Let's try to play the same game for closed manifolds with a $C_2$-action. If $M$ is a nonfree $C_2$-manifold, then its fixed set $M^{C_2}$ is a disjoint union of finitely many submanifolds, possibly of varying dimensions. If we pick a fixed point in a component of $M^{C_2}$ of codimension $k$, then there is an $(n,k)$-disk containing $x$, and we get a quotient map $M\to M/(M-D(\R^{n,k}))\cong S^{n,k}$. On cohomology this gives a map
\[
\Sigma^{n,k}\M\cong \H^{*,*}(S^{n,k})\to \H^{*,*}(M),
\]
and we'd like to understand the image. We begin with the orientable case.

If $M$ is orientable, then we might expect the map to be injective and thus give a free submodule in topological degree $n$. This is similar to how, non-equivariantly, the map $H^*_{sing}(S^n)\to H^*_{sing}(M)$ gives a free abelian group in dimension $n$. If we are careful about how we pick $k$ when the dimension of $M^{C_2}$ is not constant, then the desired statement is true for any orientable $C_2$-manifold $M$. That is, we prove there is a predictable copy of $\M$ generated in topological degree $n=\dim(M)$ in the cohomology of $M$, and the weight is given by the \emph{minimum} codimension of the components of the fixed set. An analogous statement was proven with $\underline{\Z/2}$ coefficients in \cite[Theorem A.1]{Haz20}. We give an example before stating the theorem.

\begin{example}
Consider the $3$-dimensional $C_2$-manifold $P(\R^{4,1})$, the projective space given by taking lines in $\R^{4,1}$. The underlying space is the orientable manifold $\R P^3$. We can denote points in $P(\R^{4,1})$ by $[x_1\colon x_2\colon x_3\colon x_4]$ where not all $x_i$ are zero and $[x_1\colon x_2\colon x_3\colon x_4]= [y_1\colon y_2 \colon y_3 \colon y_4]$ if there exists $\lambda \in \R$ such that   $(x_1,x_2,x_3,x_4) =\lambda (y_1,y_2,y_3,y_4)$. We can assume the action on $\R^{4,1}$ is given by $(x_1,x_2,x_3,x_4) \mapsto (-x_1,x_2,x_3,x_4)$. Then the fixed set of $P(\R^{4,1})$ has two components given by
\[
\{[x_1\colon0\colon0\colon0] \}\cong pt, \quad \quad \{[0\colon x_2 \colon x_3 \colon x_4]\} \cong \R P^2.
\]
The codimensions of these two components are $3$ and $1$, respectively. Thus the minimum codimension is $1$ and so $\Sigma^{3,1}\M$ should be a submodule of $\H^{*,*}(P(\R^{4,1}))$. Indeed, one can compute $\H^{*,*}(P(\R^{4,1})) \cong \Sigma^{2,1}\M_2 \oplus \Sigma^{3,1}\M$.
\end{example}

\begin{theorem}\label{topm}
Let $M$ be a nonfree, $n$-dimensional $C_2$-manifold such that the underlying manifold is orientable. Suppose $k$ is the minimum codimension of any component of $M^{C_2}$. Let $x\in M^{C_2}$ be a fixed point in a component of codimension $k$ and $D$ be a neighborhood of $x$ such that $D\cong D(\R^{n,k})$. Then the quotient map $q:M\to M/(M-D)\cong S^{n,k}$ induces an injection $q^*:\H^{*,*}(S^{n,k})\hookrightarrow \H^{*,*}(M)$.  
\end{theorem}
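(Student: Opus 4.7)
The plan is to combine the forgetful long exact sequence, a few relations inside the ring $\M$, and the already-proven $\uZ/2$-coefficient version \cite[Theorem A.1]{Haz20}. Since $\Sigma^{n,k}\M$ is the cyclic $\M$-module generated by a fundamental class $\sigma\in\H^{n,k}(S^{n,k})$, setting $\tilde\sigma:=q^{*}(\sigma)$, the map $q^{*}$ is injective if and only if $\operatorname{ann}_{\M}(\tilde\sigma)=0$, and I will verify this on each additive generator of $\M$.

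First, I would show $\tilde\sigma$ has infinite order. The forgetful long exact sequence of $S^{n,k}$ contains
\[
\H^{n-1,k-1}(S^{n,k})\xrightarrow{\rho}\H^{n,k}(S^{n,k})\xrightarrow{\psi}H^{n}_{sing}(S^{n})\to\H^{n,k-1}(S^{n,k}),
\]
whose outer terms are $\M^{-1,-1}=0$ and $\M^{0,-1}=0$, so $\psi$ is an isomorphism $\Z\to\Z$. Since $M$ is orientable, $q^{*}_{sing}:H^{n}_{sing}(S^{n})\to H^{n}_{sing}(M)$ is also an isomorphism, so naturality forces $\psi(\tilde\sigma)$ to be a generator of $H^{n}_{sing}(M)\cong\Z$. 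Next I would check the $\Z$-summands of $\Sigma^{n,k}\M$, all of which lie in the column $p=n$ and are generated by the powers $x^{j}$ (for $j\geq 0$) together with the $\theta$-tower elements $\theta^{j}/x^{i}$ (for $j\geq 1$, $i\geq 0$). For $x^{j}$: $\psi(x^{j}\tilde\sigma)$ is a nonzero integer (since $\psi(x)=\pm 1$), so $a\cdot x^{j}\tilde\sigma\neq 0$ for every $a\in\Z\setminus\{0\}$. For $\theta^{j}/x^{i}$: the identity $x^{i+j}\cdot(\theta^{j}/x^{i})=2^{j}$ in $\M$ implies that if $a\cdot(\theta^{j}/x^{i})\tilde\sigma=0$ then $a\cdot 2^{j}\tilde\sigma=0$, forcing $a=0$.

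The remaining additive generators of $\M$ are the $\Z/2$-classes: the positive-cone elements $\rho^{i}x^{j}$ with $i\geq 1$, and the $\mu$-tower $\mu/(\rho^{a}x^{b})$ with $a,b\geq 0$. For these I would use the commutative square induced by the Mackey functor surjection $\uZ\twoheadrightarrow\uZ/2$:
\[
\begin{tikzcd}
\Sigma^{n,k}\M \arrow[r,"q^{*}"] \arrow[d] & \H^{*,*}(M;\uZ) \arrow[d,"\pi"]\\
\Sigma^{n,k}\M_{2} \arrow[r,"q^{*}"] & \H^{*,*}(M;\uZ/2)
\end{tikzcd}
\]
By \cite[Theorem A.1]{Haz20}, the bottom $q^{*}$ is injective. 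Using the ring map $\M\to\M_{2}$ described in Remark \ref{rempointz2}, every such $\Z/2$-generator has nonzero image (for example $\mu\mapsto\theta/\tau$, and this extends compatibly to the whole $\mu$-tower through the defining divisibility relations). Hence if $m\tilde\sigma$ were zero for a $\Z/2$-generator $m$, the mod-$2$ reduction $\pi(m\tilde\sigma)$ would also vanish, contradicting injectivity of the bottom arrow applied to the nonzero image of $m\sigma$ in $\Sigma^{n,k}\M_{2}$. The main obstacle is this last verification that no $\mu$-tower generator is killed by $\M\to\M_{2}$; the positive-cone case is immediate from the formulas $\rho^{i}x^{j}\mapsto\rho^{i}\tau^{2j}$, while the $\mu$-tower case follows inductively from $\mu\mapsto\theta/\tau$ together with the defining divisibility relations.
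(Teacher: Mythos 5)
Your proof is correct and uses essentially the same two ingredients as the paper's: naturality of the forgetful map $\psi$ together with orientability of $M$ to handle the $\Z$-summands, and naturality of the reduction $\pi:\uZ\to\underline{\Z/2}$ together with the injectivity from \cite[Theorem A.1]{Haz20} to handle the $\Z/2$-summands. The only cosmetic difference is in the $\theta$-tower case, where the paper applies the $\psi$-square directly using $\psi(\theta/x^j)=2$, while you first establish that $\tilde\sigma$ has infinite order and then invoke the relation $x^{j+1}\cdot(\theta/x^j)=2$; these are equivalent.
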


\begin{proof}
Let $\alpha$ be a generator of the free module $\H^{*,*}(S^{n,k})$. We will use the following commutative diagrams where $\pi$ denotes the map induced by the quotient map $\uZ\to \underline{\Z/2}$ and $\psi$ denotes the forgetful map:
\begin{equation}\label{splitting_diagram_1}
\begin{tikzcd}
\H^{r,s}(S^{n,k};\uZ) \arrow[r,"q^*"]\arrow[d,"\pi"]& \H^{r,s}(M;\uZ)\arrow[d,"\pi"]\\
\H^{r,s}(S^{n,k};\underline{\Z/2})\arrow[r,hook,"q^*"] & \H^{r,s}(M;\underline{\Z/2})
\end{tikzcd}
\end{equation}
\begin{equation}\label{splitting_diagram_2}
\begin{tikzcd}
\H^{r,s}(S^{n,k};\uZ) \arrow[r,"q^*"]\arrow[d,"\psi"]& \H^{r,s}(M;\uZ)\arrow[d,"\psi"]\\
\H^{r}_{sing}(S^{n,k};\Z)\arrow[r,"q^*"] & \H^{r}_{sing}(M;\Z)
\end{tikzcd}
\end{equation}
The map $\pi$ is described in Remark \ref{rempointz2}. The bottom horizontal map in \ref{splitting_diagram_1} is a split injection by \cite[Theorem A.1]{Haz20}. 

All elements in $\H^{r,s}(S^{n,k};\uZ)$ are of the form $m\alpha$ for $m\in \M$. We break into three cases based on the value of $m$.

\emph{Case 1: $m=\rho^ix^j$ or $m=\sfrac{\mu}{(\rho^{i-1}x^j)}$ for $i\geq 1$, $j\geq 0$.} In these bidegrees, $\H^{r,s}(S^{n,k};\uZ)\cong \Z/2$ and the left vertical map $\pi$ in \ref{splitting_diagram_1} is an isomorphism. Thus the injectivity of the lower map $q^*$ implies the injectivity of the top map $q^*$.

\emph{Case 2: $m=x^j$ for $j\geq 0$.} In these bidegrees $\H^{n,s}(S^{n,k})\cong \Z$. Consider the square in \ref{splitting_diagram_2}. The bottom horizontal map is an isomorphism because $r=n$ and $M$ is orientable, and the left vertical map is an isomorphism because $\psi(x^j)=1$ and $\psi(\alpha)$ is the generator of $\H^{n}_{sing}(S^{n,k};\Z)$. Hence the top map $q^*$ is injective.

\emph{Case 3: $m=\sfrac{\theta}{x^j}$ for $j\geq 0$.} In these bidegrees $\H^{n,s}(S^{n,k})\cong \Z$. This will be similar to Case 2, except now $\psi(\sfrac{\theta}{x^j})=2$ and so the left map isn't an isomorphism, but it is still injective. Thus $q^*\psi$ is injective, which implies the top map $q^*$ is injective. 

We have shown $q^*$ is injective in all bidegrees where the domain is nonzero. We conclude $q^*$ is injective. 
\end{proof}

\begin{rmk}\label{sphereimagermk}
Suppose the dimension of $M^{C_2}$ is not constant, and we have a component of codimension $j$ where $j>k$. We can also determine the image of the map induced by $q_j:M\to S^{n,j}$. Since $M$ is orientable, the action on $M$ is either orientation preserving or orientation reversing. If it is orientation preserving, then $k$ and $j$ must be even integers. If it is orientation reversing, then $k$ and $j$ must be odd integers. In either case, $k$ and $j$ have the same parity. Using similar methods to case when $j=k$, one can show $\im(q_j^*)$ is the submodule generated by $x^{j-k}\cdot q^*(\alpha)$ where $q$ is the map from the theorem above and $\alpha$ is the generator of $\H^{*,*}(S^{n,k})$. Note the map will not be injective because $\mu x^{j-k}=0$ if $j>k$.
\end{rmk}

We now consider the abelian group generated by $q^*(\alpha)$ when the underlying space is nonorientable. One could also investigate the submodule $\im(q^*)$, but the answer is not yet well understood. We only need the following fact in the surface computations.

\begin{prop}
\label{nonortopm}
 Let $M$ be a nonfree $C_2$-manifold such that the underlying manifold is nonorientable. Suppose $k$ is the minimum codimension of any connected component of $M^{C_2}$. Let $q:M\to S^{n,k}$ be defined as above and let $\alpha \in \H^{n,k}(S^{n,k})$ be the generator. Then the abelian group generated by $q^*\alpha$  is either $\Z/4$ or $\Z/2$.
\end{prop}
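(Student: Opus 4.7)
The plan is to combine naturality of the forgetful map $\psi$ with the forgetful long exact sequence (Lemma \ref{fles}). Since the underlying manifold $M$ is closed, connected, and nonorientable, $\H^n_{sing}(M;\Z) \cong \Z/2$. I will first pin down the image of $q^*\alpha$ under $\psi$ to see that its order is at least $2$, and then use the relation $2\rho = 0$ in $\M$ to see that its order divides $4$.

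For the lower bound, naturality of $\psi$ gives a commutative square
\[
\begin{tikzcd}
\H^{n,k}(S^{n,k};\uZ) \arrow[r,"q^*"]\arrow[d,"\psi"]& \H^{n,k}(M;\uZ)\arrow[d,"\psi"]\\
\H^{n}_{sing}(S^n;\Z)\arrow[r,"q^*"] & \H^{n}_{sing}(M;\Z).
\end{tikzcd}
\]
The class $\psi(\alpha)$ is a generator of $\H^n_{sing}(S^n;\Z) \cong \Z$. Since $q$ is the collapse of the complement of an open $n$-disk $D \subset M$, the bottom map sends this generator to the generator of $\H^n_{sing}(M;\Z) \cong \Z/2$; this is a purely local statement that follows from excision on the pair $(M, M - D)$ together with the vanishing $\H^n(M - D;\Z) = 0$ for the open $n$-manifold $M - D$. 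Hence $\psi(q^*\alpha) \neq 0$, so $q^*\alpha$ has order at least $2$.

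For the upper bound, note that $\psi(2 q^*\alpha) = 2\psi(q^*\alpha) = 0$ in $\Z/2$, so by Lemma \ref{fles} there exists $\beta \in \H^{n-1,k-1}(M;\uZ)$ with $\rho\beta = 2 q^*\alpha$. Multiplying by $2$ gives $4q^*\alpha = 2\rho\beta = 0$, using the relation $2\rho = 0$ in $\M$. Thus the order of $q^*\alpha$ divides $4$, and combining both bounds, the cyclic group it generates has order $2$ or $4$.

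The main obstacle is the singular cohomology input in the first step, verifying that $q^*: \H^n_{sing}(S^n;\Z) \to \H^n_{sing}(M;\Z)$ hits the generator of $\Z/2$. This is a standard but essential geometric fact about nonorientable manifolds that distinguishes the present situation from the orientable case handled in Theorem \ref{topm}, where $\H^n_{sing}(M;\Z) \cong \Z$ and one gets a free submodule rather than torsion.
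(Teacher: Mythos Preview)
Your proof is correct and follows essentially the same approach as the paper: both use naturality of $\psi$ to see $\psi(q^*\alpha)\neq 0$, then use exactness of the forgetful long exact sequence to write $2q^*\alpha=\rho\beta$ and conclude $4q^*\alpha=0$ from $2\rho=0$. The paper phrases the last step as ``everything in the image of $\rho$ is $2$-torsion,'' which is the same observation.
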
 

\begin{proof}
Since $M$ is nonorientable, $\H^{n}_{sing}(M)\cong \Z/2$ and the induced map $q^*$ on singular cohomology is surjective. Consider the map of forgetful long exact sequences:
\begin{center}
\begin{tikzcd}
	\H^{n-1,k-1}(S^{n,k}) \arrow[r,"\rho"]\arrow[d,"q^*"]&\H^{n,k}(S^{n,k})\arrow[d,"q^*"]\arrow[r,"\psi","\cong" below]& \H^{n}_{sing}(S^{n,k}) \arrow[d,"q^*",twoheadrightarrow]\\
	\H^{n-1,k-1}(M) \arrow[r,"\rho"]&\H^{n,k}(M)\arrow[r,"\psi"]& \H^{n}_{sing}(M) 
\end{tikzcd}
\end{center}
The right-hand square implies $q^*\alpha$ is nonzero, and since $\H^n_{sing}(M)=\Z/2$, it must be that $2q^*\alpha \in \ker(\psi)=\im(\rho)$. Everything in the image of $\rho$ is $2$-torsion, so $2q^*\alpha$ must be $2$-torsion. Thus $\langle q^*\alpha \rangle \cong \Z/4$ or $\langle q^*\alpha \rangle \cong \Z/2$. 
\end{proof}

\begin{rmk}
It will follow from Theorem \ref{nonfreenonoranswer} that both of these possibilities can be realized. The two possibilities hint at different levels of nonorientability. In the $C_2$-surface case, the image is $\Z/4$ when the components of the fixed set have constant dimension and orientable normal bundles. The image is $\Z/2$ when the fixed set either has components of different dimensions, or when at least one of the normal bundles is nonorientable. This suggests the two cases depend on orientability conditions on the fixed set. Exactly how the two cases arise in higher dimensions is unclear. 
\end{rmk}

We record one last fact about $q^*\alpha$ and $\H^{*,*}(M)$ that will be helpful later on. 

\begin{lemma}\label{miscmani}
 Let $M$ be a nonfree $n$-dimensional $C_2$-manifold and let $q$, $k$, and $\alpha$ be as above. Then $q^*\alpha$ is not in the image of $\rho$ and $\H^{n,k-1}(M)=0$.
\end{lemma}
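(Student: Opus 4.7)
The plan is to treat the two claims separately. The first follows almost immediately from the forgetful long exact sequence, while the second requires reducing to a manifold with boundary and combining $\rho$-injectivity with the $\rho$-localization lemma.

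For $q^*\alpha \notin \im(\rho)$, I would apply the forgetful map $\psi\colon \H^{n,k}(M;\uZ) \to H^n_{sing}(M;\Z)$. By naturality, $\psi(q^*\alpha) = q^*\psi(\alpha)$, and $\psi(\alpha)$ is the non-equivariant fundamental class of $S^{n,k}$. The underlying map of $q$ collapses the complement of an $n$-disk in $M$, so $q^*$ is surjective on top singular cohomology: multiplication by $\pm 1$ if $M$ is orientable, and the reduction $\Z \twoheadrightarrow \Z/2$ if $M$ is nonorientable. In either case $\psi(q^*\alpha) \neq 0$. Since Lemma \ref{fles} gives $\psi \circ \rho = 0$, the class $q^*\alpha$ cannot lie in $\im(\rho)$.

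For $\H^{n,k-1}(M)=0$, I would first pass from $M$ to $Y = M \setminus \mathring{D}$ using the cofiber sequence $(M \setminus \mathring{D})_+ \to M_+ \to S^{n,k}$. The relevant four-term subsequence
\[\H^{n,k-1}(S^{n,k}) \to \H^{n,k-1}(M) \to \H^{n,k-1}(Y) \to \H^{n+1,k-1}(S^{n,k})\]
has endpoints $\M^{0,-1}$ and $\M^{1,-1}$, both of which vanish, so $\H^{n,k-1}(M) \cong \H^{n,k-1}(Y)$. To kill $\H^{n,k-1}(Y)$ I would combine two observations. Since $Y$ is a compact $n$-manifold with nonempty boundary, $H^r_{sing}(Y;\Z)=0$ for $r \geq n$; feeding this into the forgetful LES shows that each map $\rho\colon \H^{n+j,k-1+j}(Y) \to \H^{n+j+1,k+j}(Y)$ is injective (the kernel is the image of a connecting map out of a vanishing singular group). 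Hence $\H^{n,k-1}(Y)$ injects into its $\rho$-localization. On the other hand, Lemma \ref{rho_local} identifies $\rho^{-1}\H^{*,*}(Y)$ with $\rho^{-1}\M \otimes_\Z H^*_{sing}(Y^{C_2};\Z)$. Because $\rho\theta = \rho\mu = 0$ in $\M$, everything outside the upper cone dies after inverting $\rho$, and $\rho^{-1}\M \cong \Z/2[x,\rho^{\pm 1}]$ is supported in bidegrees whose weight is at least the topological degree. A nonzero class in bidegree $(n,k-1)$ of the tensor product would require $h \in H^d_{sing}(Y^{C_2})$ with $d \leq \dim Y^{C_2} \leq n-k$ together with $a = n-d \geq k$ and $2b = k-1-(n-d) \geq 0$, which is impossible. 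So $\rho^{-1}\H^{n,k-1}(Y) = 0$, giving $\H^{n,k-1}(Y)=0$.

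The main obstacle is this last vanishing step: a direct iteration of the forgetful LES on $M$ itself does not close up, so it is essential to pass to $Y$, where the nonempty boundary makes the relevant $\rho$-maps injective and allows the $\rho$-localization argument to finish the job.
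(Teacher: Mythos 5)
Your argument for the first claim is essentially the paper's: $\psi(q^*\alpha)=q^*\psi(\alpha)$ is a generator of $H^n_{sing}(M)$ (by the orientable/nonorientable discussion in Theorem~\ref{topm} and Proposition~\ref{nonortopm}), and $\psi\circ\rho=0$ forces $q^*\alpha\notin\im(\rho)$.

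For the second claim you take a genuinely different, but correct, route. The paper stays on $M$ itself: the surjectivity of $\psi\colon\H^{n,k}(M)\to H^n_{sing}(M)$ just established means the boundary map $H^n_{sing}(M)\to\H^{n,k-1}(M)$ in the forgetful long exact sequence is zero, so $\rho\colon\H^{n,k-1}(M)\to\H^{n+1,k}(M)$ is injective; the higher $\rho$-maps are injective because $H^{n+j}_{sing}(M)=0$ for $j\geq 1$; and then the $\rho$-localization and dimension count on $M^{C_2}$ give the contradiction exactly as you describe. So your closing remark, that a direct iteration of the forgetful LES on $M$ does not close up, is mistaken: the point is that the first half of the lemma feeds the second by supplying the needed surjectivity of $\psi$ in degree $n$. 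That said, your detour through the manifold-with-boundary $Y=M\setminus\mathring{D}$ is correct and self-contained: the cofiber sequence $(M\setminus\mathring{D})_+\to M_+\to S^{n,k}$ identifies $\H^{n,k-1}(M)$ with $\H^{n,k-1}(Y)$ since $\M^{0,-1}=\M^{1,-1}=0$; the vanishing of $H^r_{sing}(Y)$ for $r\geq n$ hands you $\rho$-injectivity without needing the surjectivity of $\psi$; and the $\rho$-localization of Lemma~\ref{rho_local} together with $\dim Y^{C_2}\leq n-k$ forces the vanishing. The trade-off is that your proof needs to know $Y$ is a finite $C_2$-CW complex in order to invoke Lemma~\ref{rho_local} on it, whereas the paper's proof only uses the localization lemma for the closed manifold $M$.
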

\begin{proof}
We showed $\psi(q^*\alpha)$ is a generator of $\H^{n}_{sing}(M)$ in both the orientable case and nonorientable case in the previous proofs, so it follows from the forgetful long exact sequence that $q^*\alpha$ is not in the image of $\rho$. 

Now assume to the contrary there is a nonzero class $y\in \H^{n,k-1}(M)$. The forgetful map $\psi:\H^{n,k}(M)\to \H^n_{sing}(M)$ is surjective because $q^*\alpha$ maps to the generator, so the forgetful long exact sequence
\[\H^{n,k}(M)\overset{\psi}{\twoheadrightarrow} \H^n_{sing}(M)\to \H^{n,k-1}(M) \overset{\rho}{\hookrightarrow} \H^{n+1,k}(M) \]
 implies $\rho y\neq 0$. But then $\rho^j y\neq 0 $ for all $j\geq 1$ because $\H^{n+j}_{sing}(M)=0$. Thus $y$ has no $\rho$-torsion and will give something nonzero in $\rho^{-1}\H^{*,*}(M)$ in bidegree $(n,k-1)$. From the $\rho$-localization given in Lemma \ref{rho_local} this would imply that $\H^{i}_{sing}(M^{C_2})$ is nonzero for some $i\geq n-(k-1)$. Recall $n-k$ is the maximum dimension of $M^{C_2}$, so this is a contradiction.
\end{proof}

\section{Background on equivariant surgery and $C_2$-surfaces}\label{sec:backgroundsurg}
We focus exclusively on $C_2$-surfaces for the remainder of the paper. In Dugger's classification of $C_2$-surfaces \cite{Dug19}, various types of equivariant surgery were introduced. We review necessary constructions and pieces of the classification here.

Let $X$ be a nontrivial $C_2$-surface and let $Y$ be a nonequivariant surface. We can form the {\bf{double connected sum}} of $X$ with $Y$ as follows. Pick a non-fixed point $x\in X$ and let $D$ be an open disk containing $x$ such that $D$ does not intersect its conjugate disk $\sigma D$. Let $X''=X-(D \sqcup \sigma D)$. Pick an open disk in $Y$ and let $Y'$ denote $Y$ with the open disk removed. Let $f$ be a homeomorphism from $\partial D$ to $\partial Y'$. Define
\[
X\#_2 Y := [X'' \sqcup (C_2\times Y')]/\sim
\]
where $x\sim (0,f(x))$ and $\sigma x \sim (1,f(x))$ for all $x\in \partial D$. Note the underlying space is just $Y \# X \# Y$. 

Let $T_1^{rot}$ denote the free $C_2$-torus whose underlying space is the genus one torus and whose action is given by rotating 180$^\circ$ around an axis that goes through the torus hole. Let $T_1^{anti}$ denote the free $C_2$-torus whose underlying space is the genus one torus and whose action is given by embedding the torus in $\R^3$ centered at the origin, and then restricting the antipodal action on $\R^3$. The theorem below follows from the classification in \cite[Theorem 4.1]{Dug19}.

\begin{theorem}\label{freeclass}
Let $X$ be a free $C_2$-surface. Then there exists a nonequivariant surface $Y$ such that $X\cong T_1^{anti}\#_2 Y$, $X\cong T_1^{rot}\#_2 Y$, or $X\cong S^2_a \#_2 Y$.
\end{theorem}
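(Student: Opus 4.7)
The strategy is to derive this statement from the full classification of closed $C_2$-surfaces in \cite[Theorem 4.1]{Dug19}. Dugger's theorem expresses every closed $C_2$-surface as an iterated equivariant connected sum of a short list of ``building blocks,'' and the plan is to identify which of these blocks can appear when the action is free.

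The key observation is that the $\#_2 Y$ operation with a nonequivariant $Y$ neither introduces nor removes fixed points or fixed circles, so applying $\#_2 Y$ preserves freeness of the action. Consequently, a free $C_2$-surface must arise as an iterated $\#_2$ sum built from \emph{free} building blocks. Inspecting Dugger's list shows that the only free building blocks are $S^2_a$, $T_1^{rot}$, and $T_1^{anti}$; all other blocks carry either isolated fixed points, fixed circles, or a trivial $C_2$-action.

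To collapse an iterated $\#_2$ construction into a single one, I would use the identity
\[(X \#_2 Y_1) \#_2 Y_2 \cong X \#_2 (Y_1 \# Y_2),\]
which holds because the conjugate pair of disks removed in the second step may be chosen inside the two conjugate copies of $Y_1'$ attached in the first step; the resulting pieces in each half are then $(Y_1 \# Y_2)$ with an open disk removed. Iterating this identity writes any free $C_2$-surface in the form $X_0 \#_2 Y$ with $X_0 \in \{S^2_a,\, T_1^{rot},\, T_1^{anti}\}$ and $Y$ the ordinary connected sum of the nonequivariant pieces used along the way (taking $Y = S^2$ when no nontrivial pieces appear, so that $X_0 \#_2 S^2 \cong X_0$).

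The main obstacle is the verification against Dugger's list that no additional free building blocks exist and that the surgery moves in \cite{Dug19} can all be realized as $\#_2$ with orientable or nonorientable summands (each such summand being further decomposable into tori and projective planes by the ordinary classification of surfaces). This is a direct inspection of the statement of \cite[Theorem 4.1]{Dug19} rather than substantive new work; the present theorem is really a translation of Dugger's result into a form convenient for the cohomology computations that follow.
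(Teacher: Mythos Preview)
The paper gives no proof of its own for this statement; it simply records that ``the theorem below follows from the classification in \cite[Theorem 4.1]{Dug19}.'' Your proposal therefore supplies strictly more detail than the paper does, and the overall spirit---extracting the free case from Dugger's full classification---is exactly what the paper intends.

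That said, your sketch makes an assumption about the \emph{shape} of \cite[Theorem 4.1]{Dug19} that you should verify. You describe Dugger's result as presenting every $C_2$-surface as an iterated $\#_2$ sum of building blocks, and then you argue by collapsing iterated $\#_2$'s. But Dugger's classification uses several surgery operations beyond $\#_2$ (antitubes, FM-moves, etc.), and in the free case the statement there may already be in the form quoted here, making your collapsing argument unnecessary. Conversely, if the statement is not already in this form, you would need to check that the free case really reduces to pure $\#_2$ moves. Either way, the substance lies in Dugger's paper rather than in your identity $(X \#_2 Y_1)\#_2 Y_2 \cong X \#_2 (Y_1 \# Y_2)$, so the honest thing to say is precisely what the paper says: this is a restatement of the free part of Dugger's classification, and the reader should consult \cite{Dug19} for the actual argument.
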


\begin{rmk}
If $Y$ is the nonorientable surface $N_r=(\R P^2)^{\# r}$, then $T_{1}^{rot}\#_2 Y \cong T_{1}^{anti}\#_2 Y$ \cite[Proposition 4.25]{Dug19}. This is not true if $Y=T_g$. For computational reasons, it is easier to keep the classification split into the three cases above even if one is redundant in the nonorientable case. 
\end{rmk}

We now consider nonfree surfaces and describe two other forms of equivariant surgery. The first is an equivariant version of attaching a handle. 

Let $X$ be a nontrivial $C_2$-surface, and as before let $X''=X-(D\sqcup \sigma D)$ where $D$ is a disk that does not intersect its conjugate. First consider the cylinder $S^{1,0}\times D(\R^{1,1})$ where recall $D(\R^{1,1})$ is the unit disk in the sign representation $\R^{1,1}$. The cylinder looks like a tube with a reflection action where the fixed set is the equatorial circle. We can attach this cylinder to $X''$ by gluing one end of the tube to $\partial D$, and then gluing the other end to $\partial(\sigma D)$, where the gluing to the conjugate boundary is determined using the $C_2$-action. We call this {\bf{$\mathbf{S^{1,0}}$-surgery}} and denote the resulting space by $X+[S^{1,0}-AT]$. We can similarly preform {\bf{$\mathbf{S^{1,1}}$-surgery}} by instead attaching a cylinder of the form $S^{1,1}\times D(\R^{1,1})$ (a rotating tube with two fixed points) to form a new space $X+[S^{1,1}-AT]$. In both cases we refer to the handle as an {\bf{antitube}}.

We can now describe all nonfree $C_2$-surfaces up to equivariant homeomorphism when the underlying space is orientable, and when the underlying space is nonorientable and the fixed set only consists of isolated fixed points; these follow from \cite[Theorems 5.7 and 7.7]{Dug19}.

\begin{theorem}\label{nonfreeclasstorus} Let $X$ be a nonfree, nontrivial $C_2$-surface whose underlying space is orientable. If $X^{C_2}$ consists only of isolated fixed points, then there exists $g \geq 0$, $n\geq0$ such that \[X \cong \left(S^{2,2} \#_2 T_g \right) + n[S^{1,1}-AT].\] If $X^{C_2}$ consists only of fixed circles, then there exists $g\geq 0$, $n\geq 0$ such that $X$ is isomorphic to 
\[\left(S^2_a \#_2 T_g\right) + n[S^{1,0}-AT], ~\left(T_1^{anti} \#_2 T_g\right) + n[S^{1,0}-AT], ~\text{or}~  \left( S^{2,1}\#_2 T_g\right) + n[S^{1,0}-AT].\]
\end{theorem}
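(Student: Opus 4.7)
The plan is to extract this statement directly from Dugger's classification results \cite[Theorems 5.7 and 7.7]{Dug19}, so the work is in matching up the parameterizations rather than doing new geometric analysis. First I would justify the dichotomy in the statement: for a $C_2$-surface with orientable underlying space, the fixed set cannot simultaneously contain isolated fixed points and fixed circles. The local linearity hypothesis forces the involution near a fixed point to act by an orthogonal transformation on the tangent plane; near an isolated fixed point this transformation is $-I$ (orientation-preserving), while near a fixed circle it is a reflection (orientation-reversing). Since a connected orientable surface admits either an orientation-preserving or an orientation-reversing involution but not both locally in different places, the two types of fixed components cannot coexist. This reduces the theorem to two independent cases.

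In the isolated-fixed-points case, I would start from $S^{2,2}$, which has exactly two isolated fixed points and underlying space $S^2$. The operation $\#_2 T_g$ attaches conjugate copies of a nonequivariant $T_g$ at two non-fixed disks, raising the underlying genus while leaving the fixed set unchanged. Each subsequent $S^{1,1}$-antitube attaches a rotating tube between conjugate disks, introducing two additional isolated fixed points (the two fixed points of the rotating $S^{1,1}$ cylinder) and raising the underlying genus by $1$. Thus the construction $(S^{2,2}\#_2 T_g)+n[S^{1,1}\text{-}AT]$ realizes every pair $(\text{genus},\,F)$ with $F=2n+2$ even and positive. The content of Dugger's theorem is that these invariants completely determine the equivariant homeomorphism type, which matches the form claimed.

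In the fixed-circles case, the three base surfaces $S^2_a$, $T_1^{anti}$, and $S^{2,1}$ account for the three minimal configurations. The first two have empty fixed set but are listed because attaching $S^{1,0}$-antitubes then introduces fixed circles; the resulting $C_2$-surfaces have fixed sets consisting of $n$ circles but differ in a secondary invariant (the ``quotient orientation class'' in Dugger's language, detecting whether the quotient surface is orientable or not). The surface $S^{2,1}\#_2 T_g$ already has one fixed circle (the equator of $S^{2,1}$), and adding antitubes produces configurations not obtainable from the first two base cases. Enumerating the genus of the underlying surface, the number $C$ of fixed circles, and the auxiliary invariant distinguishing the three families accounts for every orientable nonfree $C_2$-surface with fixed set a disjoint union of circles.

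The only real obstacle is bookkeeping: verifying that the three base cases are pairwise non-equivalent even after identifying underlying genus and $C$, and that every orientable $C_2$-surface in Dugger's list with fixed circles falls into exactly one of the three families. Both of these are covered explicitly in \cite[Theorem 5.7]{Dug19}, so the proof essentially amounts to translating Dugger's notation into the surgery-theoretic parameterization used here.
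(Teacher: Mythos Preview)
Your approach matches the paper's: this theorem is not proved in the paper at all but is simply cited as a consequence of \cite[Theorems 5.7 and 7.7]{Dug19}, exactly as you propose. Your additional explanation of the dichotomy (orientation-preserving versus orientation-reversing local models forcing the fixed set to consist entirely of isolated points or entirely of circles) and your bookkeeping on how the surgery operations realize the invariants are more than the paper provides, but the underlying strategy---translate Dugger's classification into the surgery parameterization---is identical.
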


\begin{theorem}\label{nonfreeclassnonorf} Let $X$ be a nonfree, nontrivial $C_2$-surface whose underlying space is nonorientable. Suppose $X^{C_2}$ contains only isolated fixed points. Then \[X \cong (S^{2,2} \#_2 N_r) + m[S^{1,1}-AT].\]
\end{theorem}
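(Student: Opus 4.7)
The plan is to prove this classification by induction on $m$, following Dugger's equivariant surgery framework from \cite{Dug19}; the argument parallels the orientable case of Theorem \ref{nonfreeclasstorus} but requires additional care to track nonorientability.

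First, I would record the local model at each isolated fixed point. The $C_2$-action on the tangent plane at an isolated fixed point is a linear involution whose only fixed vector is the origin; in dimension two the only such involution is $-I$, so each fixed point has a $C_2$-invariant disk neighborhood equivariantly homeomorphic to $D(\R^{2,2})$, with antipodal boundary circle. Next, I would establish the parity constraint. The equivariant Euler characteristic relation $\chi(X^{C_2}) \equiv \chi(X) \pmod{2}$ gives $F \equiv k \pmod{2}$, where $N_k$ is the underlying surface. To force $F$ (equivalently $k$) to be even, I would trace the orientation character of $X$ through the branched double cover $X \to X/C_2$: the local action at each fixed point is orientation-preserving, so each branch point contributes a definite amount to the orientation monodromy, and matching this with the hypothesis that $X$ is nonorientable forces the number of branch points to be even. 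We may then set $m = (F-2)/2$ and $r = (k-F+2)/2$.

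The inductive step for $m \geq 1$ consists of exhibiting an embedded $S^{1,1}$-antitube in $X$ that can be equivariantly removed. Given two isolated fixed points $p_1, p_2$, I would produce a $C_2$-invariant embedded arc joining them through non-fixed regions and thicken it to a standard antitube $S^{1,1} \times D(\R^{1,1})$. Replacing this antitube by two conjugate disks yields a $C_2$-surface $X'$ with $F - 2$ isolated fixed points, and by construction $X \cong X' + [S^{1,1}-AT]$. The inductive hypothesis, applied to $X'$, produces the desired form. Locating such an antitube is the main geometric obstacle: one must verify that such an invariant arc exists and that its tubular neighborhood carries the $\R^{1,1}$-normal bundle structure (as opposed to a reflection tube, which would introduce a fixed circle rather than two isolated fixed points). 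The local model from the first step and the connectedness of $X/C_2$ are the essential ingredients.

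For the base case $m = 0$, i.e., $F = 2$, I would excise the two rotation-disk neighborhoods of the fixed points to obtain a free $C_2$-surface with two antipodal boundary circles. Its orbit space is a compact nonorientable surface with two boundary circles, classified up to homeomorphism by its crosscap number $r$. Identifying this with the orbit space of $S^{2,2} \#_2 N_r$ and observing that both sides are the unique branched double cover of this orbit space compatible with the boundary monodromy identifies $X \cong S^{2,2} \#_2 N_r$, completing the proof.
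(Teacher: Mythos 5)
The paper does not prove this theorem: it is stated (together with Theorem~\ref{nonfreeclasstorus}) as an immediate consequence of Dugger's classification of $C_2$-surfaces, with the explicit citation ``these follow from \cite[Theorems 5.7 and 7.7]{Dug19}.'' So there is no proof in this paper to compare against; your proposal is attempting to reconstruct Dugger's classification result from scratch, which is a genuinely different and far more ambitious undertaking than anything the present paper does.

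As a reconstruction it has real gaps at each stage. The parity argument does not work as stated: the local model at an isolated fixed point is always rotation by $180^\circ$, which is orientation-preserving, so branch points do not ``contribute to the orientation monodromy'' of $X$ at all. The correct way to see that $F$ is even is via the unbranched part of the cover: if $W = (X/C_2)\setminus\{\text{branch pts}\}$ then the monodromy of the free double cover is a homomorphism $\phi\colon\pi_1(W)\to\Z/2$ sending each small loop $c_i$ around a branch point to $1$, and the product relation $c_1\cdots c_F = a_1^2\cdots a_s^2$ (or a product of commutators in the orientable case) forces $\phi(c_1\cdots c_F)=0$ in $\Z/2$, so $F\equiv 0$. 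In the inductive step you defer the entire geometric content to ``the main geometric obstacle''; showing that two isolated fixed points can be joined by a $C_2$-invariant arc whose tubular neighborhood is an $S^{1,1}$-antitube in standard form --- rather than producing a reflection tube, disconnecting the surface, or landing in a wrong normal-bundle framing --- \emph{is} the theorem, and it requires the isotopy and surgery arguments developed in \cite{Dug19}. You also need to check that the surgered surface $X'$ is connected and still nonorientable, neither of which is automatic. Finally, the base case is not correct as stated: the branched double cover of $N_r$ branched at two points is not unique. The monodromy $\phi$ is constrained only on the two branch loops, and the values of $\phi$ on the crosscap generators $a_1,\dots,a_r$ are free, giving $2^r$ candidate covers. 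To finish one must argue that the relevant choices produce equivariantly homeomorphic $C_2$-surfaces, which amounts to analyzing the action of the mapping class group of $(N_r,\{\text{two marked points}\})$ --- again, this is the sort of careful argument done in \cite{Dug19}, not something that can be waved at via ``both sides are the unique branched double cover.''
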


We need one more type of surgery to do computations in the case where the surface is nonorientable. This surgery method replaces fixed points with fixed circles by attaching M\"obius bands and can be described as follows.

Suppose $X$ is a $C_2$-surface with an isolated fixed point $x\in X^{C_2}$. Let $D$ be an equivariant disk containing the fixed point. Then $D\cong \R^{2,2}$ because the fixed point is isolated, and thus $\partial D \cong S^1_a$. Let $X'=X-D$. Let $M$ denote the closed M\"obius band whose $C_2$-action is shown below. This is the disk bundle of the nontrivial bundle over $S^{1,0}$. The fibers over each point in the circle are being reflected. Observe $\partial M \cong S^1_a$ so we can attach $M$ to $X'$ by equivariantly identifying their boundary circles. We denote the new space by $X+[FM]$. The ``FM'' stands for ``fixed point to M\"obius band''. Note the underlying space $X+[FM]$ is homeomorphic to $X\# \R P^2$.
\begin{center}
\includegraphics[scale=0.6]{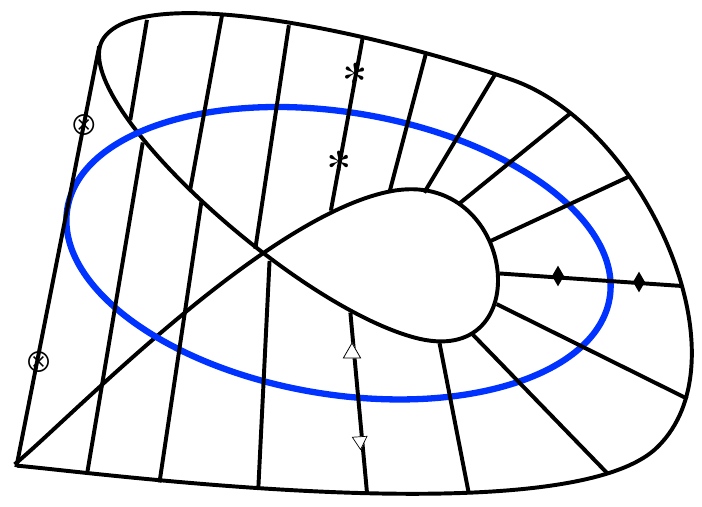}
\end{center}

We won't need any remaining other parts of the classification to complete the computations. We instead make use of various surgery operations in the case that the fixed set contains a fixed circle. Recall there are two isomorphism classes of one-dimensional bundles over $S^1$, the trivial bundle and the M\"obius bundle. When the normal bundle of a fixed circle is trivial, we say that circle is {\bf{two-sided}}. When the normal bundle of a fixed circle is nontrivial, we say that circle is {\bf{one-sided}}. 

We will occasionally need to do the above surgery operations backwards. For example, suppose $X$ is a $C_2$-surface with a two-sided fixed circle $S$ such that $X-S$ is connected. Observe we can remove an equivariant tubular neighborhood around $S$ (this will be an $S^{1,0}$-antitube), and the resulting space will have boundary $C_2\times S^1$. We can then attach $C_2\times D^2$ to get a new surface $Y$. It was proven in \cite{Dug19} that we can recover $X$ from $Y$ via
\[X\cong Y+[S^{1,0}-AT].\] We can do something similar when the circle is one-sided and get that 
\[X\cong Y+[FM]\] for some $C_2$-surface $Y$. \smallskip

We will need to keep track of how surgery operations affect the fixed set and the genus. Recall the genus of a surface can be recovered from the dimension of its first singular cohomology group in $\Z/2$-coefficients. For a torus the usual genus is half of the dimension of its first singular cohomology group, while for a nonorientable surface $N_r=(\R P ^2)^{\# r}$, the dimension is exactly $r$, which is the usual notion of ``genus'' for a nonorientable surface. For a $C_2$-surface $X$ write
\begin{align*}
&F(X) = \# \text{ isolated fixed points} , \quad\quad\quad ~~ C(X) = \# \text{ fixed circles},\\
&C_+(X)  = \# \text{ two-sided fixed circles}, \quad C_-(X)  = \# \text{ one-sided fixed circles}, \\
&\beta(X)  = \dim_{\Z/2}(H^1_{sing}(X;\Z/2)).
\end{align*}

\begin{lemma}\label{surgeffects}
Let $X, Y$ be $C_2$-surfaces. 
\begin{enumerate}
\item[(i)] Suppose $X\cong Y+[S^{1,0}-AT]$. Then $\beta(X)=\beta(Y)+2$, $F(X)=F(Y)$, $C_+(X)=C_+(Y)+1$, and $C_-(X) = C_-(Y)$.
\item[(ii)] Suppose $X\cong Y+[S^{1,1}-AT]$. Then $\beta(X)=\beta(Y)+2$, $F(X)=F(Y)+2$, $C_+(X)=C_+(Y)$, and $C_-(X) = C_-(Y)$.
\item[(iii)] Suppose $X\cong Y+[FM]$. Then $\beta(X)=\beta(Y)+1$, $F(X)=F(Y)-1$, $C_+(X)=C_+(Y)$, and $C_-(X) = C_-(Y)+1$.
\end{enumerate}
\end{lemma}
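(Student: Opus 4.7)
The plan is to analyze each surgery operation separately, handling the fixed-set invariants $F, C_+, C_-$ by direct inspection of the attached piece, and handling $\beta$ by a uniform Euler-characteristic calculation on the underlying (non-equivariant) surface.

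For the fixed-set invariants, I would simply identify the $C_2$-fixed set of the attached handle or M\"obius band and note that the surgery does not touch the fixed set of the complement $Y''$ (respectively $Y-D$) of the disks being removed. In (i), the antitube $S^{1,0}\times D(\R^{1,1})$ has fixed set equal to the equator circle $S^{1,0}\times\{0\}$, whose normal bundle is the trivial bundle $D(\R^{1,1})$, so exactly one two-sided fixed circle is added; the two conjugate disks removed from $Y$ contained no fixed points, so $F$, $C_-$ are unchanged. In (ii), the antitube $S^{1,1}\times D(\R^{1,1})$ has fixed set equal to the two fixed points of $S^{1,1}$ crossed with the origin of $D(\R^{1,1})$, yielding two new isolated fixed points and no new circles. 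In (iii), the surgery removes an isolated fixed point (so $F$ drops by $1$) and glues in the M\"obius band $M$, whose fixed set is its core circle; since $M$ is by construction the total space of the nontrivial line bundle over $S^{1,0}$, this new fixed circle is one-sided, so $C_-$ increases by $1$ and $C_+$ is unchanged.

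For $\beta$, since $\beta(X)=2-\chi(X)$ for any closed surface, it suffices to compute $\chi(X)-\chi(Y)$ in each case by additivity of Euler characteristics along the gluing circles (each of which has $\chi=0$). In (i) and (ii), we remove two open disks (contributing $-2$ to $\chi$) and glue in a cylinder (contributing $0$), so $\chi(X)=\chi(Y)-2$ and hence $\beta(X)=\beta(Y)+2$. In (iii), we remove one open disk (contributing $-1$) and glue in a M\"obius band (contributing $0$, since it deformation retracts to $S^1$), so $\chi(X)=\chi(Y)-1$ and $\beta(X)=\beta(Y)+1$.

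None of the steps should present a real obstacle: once the surgery constructions from Section~\ref{sec:backgroundsurg} are unpacked, the fixed-set assertions are immediate from the explicit $C_2$-actions on the tubes and the M\"obius band, and the $\beta$ statements reduce to a one-line Euler-characteristic count. The only point requiring any care is verifying that the two removed disks in the antitube surgeries are genuinely disjoint from the fixed set (so that no existing fixed points or fixed circles are destroyed), which is guaranteed by the definition of double-connected-sum-style surgery, where the removed disk $D$ is chosen to satisfy $D\cap \sigma D=\emptyset$ and thus avoids $X^{C_2}$.
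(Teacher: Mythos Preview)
Your proposal is correct. The paper states this lemma without proof, treating it as an immediate consequence of the surgery constructions in Section~\ref{sec:backgroundsurg}; your argument via direct inspection of the fixed set of each attached piece together with the Euler-characteristic identity $\beta(X)=2-\chi(X)$ is exactly the natural way to fill in the details.
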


\section{Computations for free $C_2$-surfaces}
\label{sec:freecomp}
We now compute the cohomology of all $C_2$-surfaces with a free $C_2$-action. We start by computing the cohomology of the free tori $T_1^{anti}$ and $T_1^{rot}$. We then consider equivariant connected sums, and finally we make use of the classification in Theorem \ref{freeclass} to finish the computation.

\begin{lemma}\label{freetorusanswerz} For the two free genus one tori, we have that
\[H^{*,*}(T_1^{rot}) \cong \Sigma^{1,0}\A_1 \oplus \Sigma^{1,0}\A_1 \quad \quad \text{and} \quad \quad H^{*,*}(T_1^{anti}) \cong \Sigma^{1,0}\A_1 \oplus \Sigma^{1,1}\A_1.\]
\end{lemma}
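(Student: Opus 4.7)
The plan is to exploit $C_2$-equivariant product decompositions of each torus. For the rotation torus, $T_1^{rot}\cong S^{1,0}\times S^1_a$: the $180^\circ$-rotation about the axis through the donut hole fixes each meridian circle pointwise while acting antipodally on the longitudinal direction. For the antipodal torus, parametrizing the embedded torus by longitude $\theta$ and meridian $\phi$, the antipodal action on $\R^3$ restricts to $(\theta,\phi)\mapsto(\theta+\pi,-\phi)$, which is antipodal on the first factor and reflection on the second, so $T_1^{anti}\cong S^1_a\times S^{1,1}$ as $C_2$-spaces. (The product is automatically free because $S^1_a$ is.)

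Given these identifications, I would set up a cofiber sequence arising from the inclusion of a distinguished ``section circle''. For $T_1^{rot}$ include the fiber $\{\mathrm{pt}\}\times S^1_a$ at any (necessarily fixed) point of $S^{1,0}$; for $T_1^{anti}$ include the outer equator $S^1_a\times\{p_0\}$, where $p_0\in S^{1,1}$ is one of the two reflection fixed points. In each case the inclusion is equivariant, and projection onto the $S^1_a$-factor furnishes an equivariant retract. Consequently every connecting map in the associated long exact sequence
\[ \cdots \to \tilde{H}^{p,q}(T/S^1_a)\to H^{p,q}(T)\to H^{p,q}(S^1_a)\to \tilde{H}^{p+1,q}(T/S^1_a)\to \cdots \]
vanishes, and the resulting short exact sequences split as $\M$-modules.

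The cofibers are computed using the standard identification $A\times B/(A\times\{b_0\})\cong A_+\wedge B$. For $T_1^{rot}$ this gives $S^{1,0}\wedge(S^1_a)_+\simeq\Sigma^{1,0}(S^1_a)_+$; for $T_1^{anti}$ it gives $(S^1_a)_+\wedge S^{1,1}\simeq\Sigma^{1,1}(S^1_a)_+$. Applying the suspension isomorphism and using $H^{*,*}(S^1_a)=\A_1$, the reduced cohomologies of the cofibers are $\Sigma^{1,0}\A_1$ and $\Sigma^{1,1}\A_1$ respectively. Combining everything yields the two claimed $\M$-module decompositions.

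The main nontrivial input is the product decomposition $T_1^{anti}\cong S^1_a\times S^{1,1}$; it is not immediately apparent from the embedded picture of the torus that the antipodal action factors through separate actions on the two circle factors. Once this decomposition is in hand, the equivariant retracts guarantee all the splittings and the cohomology computation becomes formal.
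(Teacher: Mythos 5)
Your proof is correct and uses essentially the same approach as the paper: the product decompositions $T_1^{rot}\cong S^{1,0}\times S^1_a$ and $T_1^{anti}\cong S^1_a\times S^{1,1}$, the cofiber sequence from including the $S^1_a$-factor, and the equivariant projection to $S^1_a$ to split the resulting short exact sequence of $\M$-modules (you also use the retraction to kill the connecting map, whereas the paper first invokes the quotient lemma for that step and only then uses the retraction to split, but that is a minor streamlining rather than a different argument). One small observation: your derivation yields $\A_1\oplus\Sigma^{1,0}\A_1$ and $\A_1\oplus\Sigma^{1,1}\A_1$, which agrees with the paper's own proof, the introduction, and the figure captions, and since $\A_1\not\cong\Sigma^{1,0}\A_1$, the lemma statement's printed $\Sigma^{1,0}\A_1\oplus\Sigma^{1,0}\A_1$ and $\Sigma^{1,0}\A_1\oplus\Sigma^{1,1}\A_1$ appear to be typos.
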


\begin{proof}
Consider first $T=T_1^{rot}$. This $C_2$-torus can be realized as the product $S^{1,0}\times S^1_a$ and so we have a cofiber sequence
\[S^{1}_{a+}\hookrightarrow T_+ \to S^{1,0}\wedge S^{1}_{a+}.\]
The associated long exact sequence is shown in Figure \ref{fig:zfreetoricomp}. Observe all differentials can be determined using $d^{0,0}$, $d^{1,0}$, and the module structure.
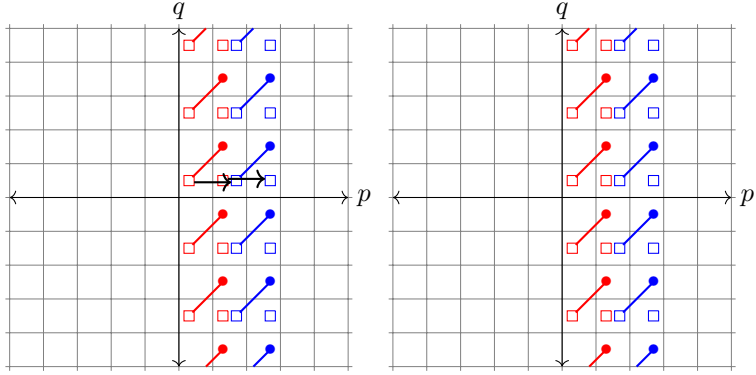
\begin{figure}[ht]
\begin{tikzpicture}[scale=.45]
\draw[help lines,gray] (-5.125,-5.125) grid (5.125, 5.125);
\draw[<->] (-5,0)--(5,0)node[right]{$p$};
\draw[<->] (0,-5)--(0,5)node[above]{$q$};
\zantizo{-.2}{red};
\zantizo{1.2}{blue};
\draw[thick,->] (.45,.45)--(1.55,.45);
\draw[thick,->] (1.45,.55)--(2.55,.55);
\end{tikzpicture}
\begin{tikzpicture}[scale=.45]
\draw[help lines,gray] (-5.125,-5.125) grid (5.125, 5.125);
\draw[<->] (-5,0)--(5,0)node[right]{$p$};
\draw[<->] (0,-5)--(0,5)node[above]{$q$};
\zantizo{-.2}{red};
\zantizo{1.2}{blue};
\end{tikzpicture}
\caption{The differential $\tilde{H}^{*,*}(S^1_{a+})\to \tilde{H}^{*+1,*}(S^{1,0}\wedge S^1_{a+})$.}
\label{fig:zfreetoricomp}
\end{figure}

The orbit space $T/C_2$ is again a torus, so by the quotient lemma 
\[H^{0,0}(T) \cong H^{0}_{sing}(T/C_2)\cong \Z\quad \text{ and } \quad H^{1,0}(T)\cong H^{1}_{sing}(T/C_2) \cong \Z \oplus \Z.\] Thus the two differentials must be zero, which implies all differentials are zero. 

It remains to solve the extension problem
\[0\to \H^{*,*}(S^{1,0}\wedge S^1_{a+})\to H^{*,*}(T_1^{rot}) \to H^{*,*}(S^1_a) \to 0.\]
Since $T^{rot}_1=S^{1,0}\times S^{1}_a$, we can consider the composition \[\{0\}\times S^1_a\hookrightarrow S^{1,0} \times S^1_a \to S^1_a\] that is equal to the identity. This gives a splitting of the short exact sequence above, so $H^{*,*}(T^{rot}_1)\cong \A_1 \oplus \Sigma^{1,0}\A_1$. The computation for the antipodal torus is done similarly by noting $T^{anti}_1=S^{1,1}\times S^1_a$ and $T^{anti}_1/C_2$ is a Klein bottle.
\end{proof}

We now consider connected sums. Let $Y$ be a surface and $T$ be one of the two free tori. In order to compute the cohomology of $T\#_2 Y$, we will use a cofiber sequence that crushes the two copies of $Y$ to a single point. The resulting space is the cofiber of $C_2\hookrightarrow T$. We compute the cohomology of these ``pinched'' tori below and then consider $T\#_2 Y$.

\begin{lemma}\label{pinchedtor} 
For a nontrivial $C_2$-space $X$, denote by $\tilde{X}$ the cofiber of $C_2\hookrightarrow X$. For the two free tori,
		\[
		\tilde{H}^{*,*}(\tilde{T}_{1}^{rot}) \cong \left(\Sigma^{1,0}\A_0\right)\oplus \left(\Sigma^{1,0}\A_1\right)  \quad \text{ and} \quad  \tilde{H}^{*,*}(\tilde{T}_{1}^{anti}) \cong \left(\Sigma^{1,0}\A_0\right)\oplus \left(\Sigma^{1,1}\A_1\right).
		\]
\end{lemma}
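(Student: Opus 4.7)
The plan is to obtain $\tilde{H}^{*,*}(\tilde{T}_1)$ from a split cofiber sequence rather than directly analyzing the long exact sequence of $C_{2+} \hookrightarrow T_{1+} \to \tilde{T}_1$, whose restriction map $H^{*,*}(T_1) \to \A_0$ is not surjective on odd weights and would introduce nontrivial extension problems. First, use the product decompositions $T_1^{rot} = S^{1,0} \times S^1_a$ and $T_1^{anti} = S^{1,1} \times S^1_a$, writing uniformly $T = S^{1,\epsilon} \times S^1_a$ with $\epsilon \in \{0,1\}$. Take the free orbit to be $C_2 = \{pt\} \times S^0_a$, so that the inclusion factors as
\[C_2 \hookrightarrow \{pt\} \times S^1_a \hookrightarrow T.\]
The cofiber of the short inclusion $C_2 \hookrightarrow S^1_a$ is the pinched circle $\tilde{S}^1_a$, obtained by identifying the two antipodal points to a basepoint, and is equivariantly homeomorphic to $C_{2+} \wedge S^1$; the cofiber of the long inclusion $\{pt\} \times S^1_a \hookrightarrow T$ is $S^{1,\epsilon} \wedge S^1_{a+}$.

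By the octahedral axiom for cofiber sequences applied to the above composition, the induced map $\tilde{S}^1_a \hookrightarrow \tilde{T}_1$ has cofiber equal to the cofiber of $\{pt\} \times S^1_a \hookrightarrow T$, giving the cofiber sequence
\[\tilde{S}^1_a \longrightarrow \tilde{T}_1 \longrightarrow S^{1,\epsilon} \wedge S^1_{a+}.\]
The key observation is that this sequence splits on cohomology: the projection $T \to S^1_a$ onto the second factor retracts the inclusion $\{pt\} \times S^1_a \hookrightarrow T$ and carries $C_2 \subset T$ onto $C_2 \subset S^1_a$, so it descends to a $C_2$-equivariant retraction $\tilde{T}_1 \to \tilde{S}^1_a$ of the inclusion. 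Such a retraction forces the associated long exact sequence in reduced $RO(C_2)$-graded cohomology to break into split short exact sequences, yielding the $\M$-module isomorphism
\[\tilde{H}^{*,*}(\tilde{T}_1) \cong \tilde{H}^{*,*}(\tilde{S}^1_a) \oplus \tilde{H}^{*,*}(S^{1,\epsilon} \wedge S^1_{a+}).\]

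Finally, both summands are identified using Lemma \ref{product}: the pinched circle gives $\tilde{H}^{*,*}(\tilde{S}^1_a) = \tilde{H}^{*,*}(C_{2+} \wedge S^1) \cong \A_0 \otimes_\Z \tilde{H}^*_{sing}(S^1) = \Sigma^{1,0}\A_0$, while the suspension isomorphism produces $\tilde{H}^{*,*}(S^{1,\epsilon} \wedge S^1_{a+}) \cong \Sigma^{1,\epsilon} H^{*,*}(S^1_a) = \Sigma^{1,\epsilon}\A_1$. Specializing $\epsilon = 0$ for $T_1^{rot}$ and $\epsilon = 1$ for $T_1^{anti}$ yields the two stated isomorphisms. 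The main technical point is arranging the cofiber sequence from the chain of inclusions via the octahedral axiom and checking that the retraction on $T$ descends compatibly to $\tilde{T}_1$; once that is done, no differential computation or extension problem remains.
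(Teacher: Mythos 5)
Your proof is correct and takes a genuinely different route from the paper. The paper uses the single cofiber sequence $T_+ \to \tilde{T} \to \Sigma^{1,0}C_{2+}$ (extending the defining sequence by one rotation), determines the differential via the quotient lemma ($\tilde{T}/C_2$ is a Klein bottle when $T=T_1^{anti}$), and then resolves a nontrivial abelian-group extension in bidegrees $(1,2k+1)$ by feeding $\tilde{H}^{1,1}(\tilde{T})$ into the forgetful long exact sequence and noting it must inject into the torsion-free group $\H^1_{sing}(\tilde{T})\cong \Z^3$. Your proof instead exploits the product structure $T = S^{1,\epsilon}\times S^1_a$ to factor the pinched pair through an intermediate $S^1_a$, invokes the octahedral axiom to produce $\tilde{S}^1_a \to \tilde{T} \to S^{1,\epsilon}\wedge S^1_{a+}$, and observes that the projection $T\to S^1_a$ descends to an equivariant retraction of the first map. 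The retraction kills all connecting homomorphisms and splits the long exact sequence on the nose, so neither a differential computation nor an extension problem arises. What your approach buys is exactly that elimination of the extension analysis; what the paper's approach buys is uniformity with the rest of Section 6, where the same $T_+\to\tilde{T}\to\Sigma^{1,0}C_{2+}$ template recurs and the reader has already built intuition for that style of argument. Both the identification $\tilde{S}^1_a \cong C_{2+}\wedge S^1$ and the identification of the second cofiber with $S^{1,\epsilon}\wedge S^1_{a+}$ (with the basepoint of $S^{1,\epsilon}$ taken to be the fixed point $\infty$) check out, and the appeal to Lemma \ref{product} and the suspension isomorphism to read off $\Sigma^{1,0}\A_0$ and $\Sigma^{1,\epsilon}\A_1$ is correct.
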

\begin{proof}
We provide the proof for $T=T_1^{anti}$ noting the computation for $T_1^{rot}$ follows similarly. Extend the defining cofiber sequence to
	\[
	T_+\to \tilde{T}\to\Sigma^{1,0}C_{2+}.
	\]
The differential $d: \tilde{H}^{*,*}(T_+) \to\tilde{H}^{*,*+1}(\Sigma^{1,0}C_{2+})$ is illustrated on the left in Figure \ref{fig:ztortilde} below.
\begin{figure}[ht]
	\begin{tikzpicture}[scale=.45]
	\draw[help lines,gray] (-1.125,-5.125) grid (5.125, 5.125);
	\draw[<->] (-1,0)--(5,0)node[right]{$p$};
	\draw[<->] (0,-5)--(0,5)node[above]{$q$};
	\zantizo{-.3}{red};
	\zantioo{1.3}{red};
	\zline{1}{blue};
	\draw[thick,->] (.2,.5)--(1.5,.5);
	\end{tikzpicture}
	\begin{tikzpicture}[scale=.45]
	\draw[help lines,gray] (-1.125,-5.125) grid (5.125, 5.125);
	\draw[<->] (-1,0)--(5,0)node[right]{$p$};
	\draw[<->] (0,-5)--(0,5)node[above]{$q$};
	\zantioo{1.3}{red};
	\foreach \y in {-4,-2,0,2,4}
		\zbox{.8}{\y}{red};
	\foreach \y in {-5,-3,-1,1,3}
		\draw[red] (1.2,\y+.5) node{\small{$\bullet$}};
	\foreach \y in {-5,-3,-1,1,3}
		\zbox{1}{\y}{blue};
	\end{tikzpicture}
	\begin{tikzpicture}[scale=.45]
	\draw[help lines,gray] (-1.125,-5.125) grid (5.125, 5.125);
	\draw[<->] (-1,0)--(5,0)node[right]{$p$};
	\draw[<->] (0,-5)--(0,5)node[above]{$q$};
	\zantioo{1.3}{black};
	\zline{0.8}{black};
	\end{tikzpicture}
\caption{The differential $\tilde{H}^{*,*}(T_+)\to \tilde{H}^{*+1,*}(\Sigma^{1,0}C_{2+})$.}
\label{fig:ztortilde}
\end{figure}
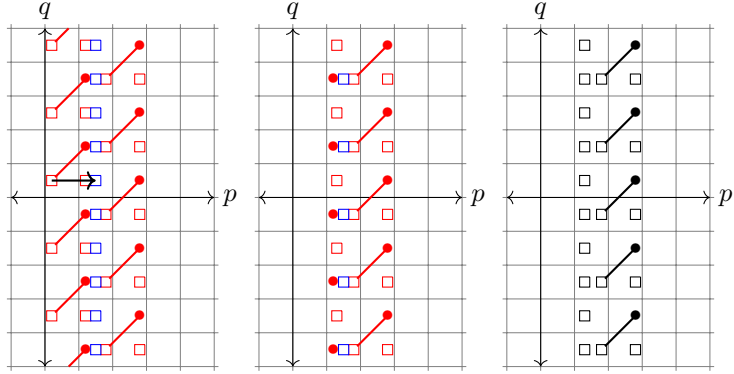

Many of the differentials are forced to be zero, and all others can be determined from $d^{0,0}$ and the module structure. Observe $\tilde{T}/C_2\cong T/C_2\cong \R P^2\# \R P^2$ so by the quotient lemma $\tilde{H}^{0,0}(\tilde{T}) = 0$ and $\tilde{H}^{1,0}(\tilde{T}) \cong \Z$. Thus $\ker(d^{0,0})=0$ and $d^{0,0}$ is injective. There is a short exact sequence
	\[
	0 \to \coker(d^{0,0}) \to \H^{1,0}(\tilde{T}) \to \ker(d^{1,0}) \to 0.
	\]
Observe $\ker(d^{1,0})\cong \Z$ so it must be that $\coker(d^{0,0})=0$, which implies $d^{0,0}$ is surjective as well. It remains to solve the extension problem of $\M$-modules shown in the middle grid in Figure \ref{fig:ztortilde}.

For each $(p,q)$ we must first solve the extension problem of abelian groups
	\[
	0 \to \coker(d^{p-1,q}) \to \tilde{H}^{p,q}(\tilde{T}) \to \ker(d^{p,q}) \to 0
	\]
The only possible nontrivial extensions occur in bidegrees $(1,2k+1)$ where we have
\begin{equation}\label{eq:exttildex}
	0 \to \Z \to \tilde{H}^{1,2k+1}(\tilde{T}) \to \Z \oplus \Z/2 \to 0.
\end{equation}
To solve this, consider a portion of the forgetful long exact sequence:
\begin{center}
	\begin{tikzcd}[column sep=small]
	\H^{0,2k}(\tilde{T})\arrow[r,"\rho"]&\H^{1,2k+1}(\tilde{T})	\arrow[r,"\psi"] & \H^{1}_{sing}(\tilde{T}).
\end{tikzcd}
\end{center}
Non-equivariantly $\tilde{T} \simeq T \vee S^{1}$ so $\H^{1}_{sing}(\tilde{T})\cong\Z^{\oplus 3}$. The above portion is thus
\begin{center}
	\begin{tikzcd}[column sep=small]
	0\arrow[r,"\rho"]&\H^{1,2k+1}(\tilde{T}) \arrow[r,"\psi"] & \Z^{3} 
\end{tikzcd}
\end{center}
Now the two options are $\H^{1,1}(\tilde{T})\cong \Z^{\oplus 2}$ if the extension in \ref{eq:exttildex} is nontrivial, or $\H^{1,1}(\tilde{T})\cong \Z^{\oplus 2}\oplus \Z/2$ if the extension is trivial. The above shows $\H^{1,1}(\tilde{T})$ must inject into $\Z^{3}$, so it must be that the extension is nontrivial, and the cohomology of $\tilde{T}$ is given by the far right picture in Figure \ref{fig:ztortilde}. In this case, the $\M$-module structure can be entirely determined from the middle picture once we know the extension as a bigraded abelian group. As usual, we have suppressed the action of $x$ from our picture, noting it is an isomorphism in all bidegrees.
\end{proof}

\begin{lemma}\label{ztorconnsumor} Let $Y$ be a non-equivariant surface and let $T$ be one of the two free tori. If $Y$ is orientable, then
	\[
	H^{*,*}(T\#_2 Y) \cong H^{*,*}(T) \oplus \left( \Sigma^{1,0}\A_0 \right)^{\oplus \beta(Y)}.
	\]
If $Y$ is nonorientable, then
	\[
	H^{*,*}(T\#_2 Y) \cong \A_1 \oplus \left( \Sigma^{1,0}\A_0 \right)^{\oplus \beta(Y)-1}\oplus  \Sigma^{1,0} F_1 \oplus  \Sigma^{1,1}F_1,
	\]
where $F_i=x^{-1}\M_2/(\rho^{i+1})$.
\end{lemma}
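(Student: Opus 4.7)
The strategy is to apply the approach from Section~\ref{compstat} to a single cofiber sequence. Writing $T''=T\setminus(D\sqcup\sigma D)$ and $Y'=Y\setminus D$, the space $T\#_2 Y$ is the equivariant pushout of $C_2\times Y'\leftarrow C_2\times S^1\to T''$, so collapsing $(C_2\times Y')_+$ in $(T\#_2 Y)_+$ collapses $\partial T''$ to a single point. Since each of the conjugate disks is contractible, the result is (up to homotopy) $\tilde T$, the pinched torus of Lemma~\ref{pinchedtor}, giving the cofiber sequence
\[(C_2\times Y')_+ \hookrightarrow (T\#_2 Y)_+ \to \tilde T.\]
Combining Lemma~\ref{product} with $Y'\simeq \bigvee^{\beta(Y)}S^1$ yields $H^{*,*}(C_2\times Y')\cong \A_0\oplus(\Sigma^{1,0}\A_0)^{\oplus\beta(Y)}$ in both cases, while $\tilde H^{*,*}(\tilde T)$ comes from Lemma~\ref{pinchedtor}.

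The connecting differential $d\colon H^{*,*}(C_2\times Y')\to \tilde H^{*+1,*}(\tilde T)$ is $\M$-linear of degree $(1,0)$. Because $\rho$ acts trivially on the source, $d$ is determined by its values on the $\M$-generators, which live in bidegrees $(0,0), (0,1), (1,0),$ and $(1,1)$. The weight-zero differentials $d^{0,0}$ and $d^{1,0}$ reduce via Lemma~\ref{quotient} to the connecting map of the nonequivariant cofiber sequence $Y'_+ \to ((T/C_2)\#Y)_+ \to T/C_2$ in singular cohomology; a short LES analysis shows $d^{0,0}=0$ always, and $d^{1,0}=0$ except when $T=T_1^{rot}$ and $Y=N_r$, in which case the map $H^2(T/C_2)=\Z\to H^2(N_{r+2})=\Z/2$ is mod-$2$ reduction, forcing $d^{1,0}$ to have image $2\Z$ on one of the $r$ generators. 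The weight-one differentials $d^{0,1}$ and $d^{1,1}$ are subtler and would be pinned down using the forgetful long exact sequence (Lemma~\ref{fles}) together with the known singular cohomology of $T\#_2 Y$ (which is $T_{2g+1}$ or $N_{2r+2}$ by the classical surface classification); in particular, $d^{0,1}$ is expected to be multiplication by $2$, producing the needed $\Z/2$-classes.

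With $d$ determined, I would solve the resulting extension problem. For orientable $Y$, the nontrivial weight-one differentials produce $\Z/2$-classes in $H^{*,*}(T\#_2 Y)$ realized as $\rho$-multiples of existing generators, matching the $\A_1$-structure of $H^{*,*}(T)$. This can be cross-checked by restricting to an invariant $S^1_a\subset T\#_2 Y$ (for example a longitude circle in $T_1^{rot}$), under which $\rho\cdot 1$ restricts to the nonzero $\rho$-class in $\A_1=H^{*,*}(S^1_a)$. The full decomposition $H^{*,*}(T)\oplus(\Sigma^{1,0}\A_0)^{\oplus\beta(Y)}$ then follows. For nonorientable $Y$, the nonzero $d^{1,0}$ combines with the weight-one differentials to produce the $\Sigma^{1,0}F_1\oplus\Sigma^{1,1}F_1$ summand (encoding nontrivial $\rho$-interactions between $\Z$'s and $\Z/2$'s), alongside the $\A_1$ and $(\Sigma^{1,0}\A_0)^{\oplus\beta(Y)-1}$ summands.

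The main obstacle will be pinning down the $\rho$-extensions in the nonorientable case. The $F_1$-summands encode specific $\rho$-relationships between integer and mod-$2$ classes, and distinguishing them from a direct sum of simpler $\A_0$- and $\M$-pieces requires careful tracking via the forgetful LES, the vanishing of $\rho^{-1}H^{*,*}(T\#_2 Y)$ (which holds since $T\#_2 Y$ is free, so its fixed set is empty), and consistency with the singular cohomology of the underlying $N_{2r+2}$.
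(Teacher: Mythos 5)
Your setup matches the paper's: you use the same cofiber sequence $(C_2\times Y')_+\hookrightarrow (T\#_2 Y)_+\to\tilde T$, read off $H^{*,*}(C_2\times Y')$ from Lemma~\ref{product}, and use Lemma~\ref{pinchedtor} for the cofiber. Your observation that the $T_1^{rot}$, nonorientable-$Y$ case forces a nonzero $d^{1,0}$ is a genuine subtlety that the paper brushes past with ``the proof for $T=T_1^{rot}$ will follow similarly,'' so credit for noticing it.

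However, the core of the argument is missing. The paper's proof hinges on an auxiliary structural lemma that your proposal does not establish: by mapping the cofiber sequence
\[
C_2\times Y'\to T\#_2 Y\to \tilde T
\]
to $C_2\to T\to\tilde T$ via the collapse $q\colon C_2\times Y'\to C_2$ and chasing the resulting map of long exact sequences, one shows $q^*\colon H^{\epsilon,j}(T)\to H^{\epsilon,j}(T\#_2 Y)$ is injective for $\epsilon=0,1$ and all $j$. This injection is the workhorse for everything that is hard here: it forces $d^{0,1}\neq 0$ (because $H^{1,1}(T)\cong\Z\oplus\Z/2$ must embed, ruling out $\coker(d^{0,1})\cong\Z^2$), and it solves the extension problem in the orientable case (the $\rho$-connections lift along $q^*$). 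Your suggested cross-check by restricting to a sub-$S^1_a$ gestures toward the right phenomenon, but restriction maps do not obviously control the extension the way an injection from $H^{*,*}(T)$ does, and you give no argument for injectivity. Similarly, for $d^{1,1}$ the paper does \emph{not} get by with the forgetful LES alone; in the nonorientable case it needs the long exact sequence arising from $\uZ\to\uZ\to\underline{\Z/2}$, together with the known $\underline{\Z/2}$-computation, to see that $H^{2,1}$ is nonzero and hence $\coker(d^{1,1})\cong\Z/2$. Your plan mentions only the forgetful LES and so cannot close this step. Finally, your discussion of the $\rho$-extensions in the nonorientable case is entirely in the conditional (``would be pinned down,'' ``requires careful tracking''); as written it is a plan, not a proof, and the extension analysis is precisely where the $F_1$-summands are distinguished from simpler direct sums.
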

\begin{proof} 
Assume $T= T_1^{anti}$. The proof for $T=T_1^{rot}$ will follow similarly. Let $Y'$ be the non-equivariant space obtained by removing a disk from $Y$ in order to form $T\#_2 Y$. Consider the cofiber sequence
	\[
	\left(C_2\times Y'\right)_+ \hookrightarrow \left(T\#_2 Y\right)_+ \to \tilde{T}
	\]
where $\tilde{T}$ is the pinched space whose cohomology was found in Lemma \ref{pinchedtor}. By Lemma \ref{product}, the cohomology of $C_2\times Y'$ is given by
	\[
	H^{*,*}(C_2\times Y')\cong \A_0\otimes_{\Z} H^{*}_{sing}(Y') \cong \A_0 \oplus (\Sigma^{1,0} \A_0)^{\beta(Y)}.
	\]
Before illustrating the long exact sequence, we make the following observation. 

Consider the map $q: C_2\times Y' \to C_2$ that collapses each component to a point. This map induces an isomorphism in bidegrees $(0,j)$ for all $j$.  We have a map of cofiber sequences 
\begin{center}
\begin{tikzcd}[column sep=small, row sep=small]
	C_{2}\times Y' \arrow[r,hook] \arrow[d,"q"]& T\#_2Y \arrow[r] \arrow[d,"q"] & \tilde{T}\arrow[d,"id"]\\
	C_{2} \arrow[r,hook] & T \arrow[r] & \tilde{T}
\end{tikzcd}
\end{center}
which induces the map of long exact sequences
\begin{center}
\begin{tikzcd}[column sep=small]
	H^{0,j}\left(C_{2}\times Y'\right) \arrow[r] &\H^{1,j}(\tilde{T}) \arrow[r]& H^{1,j}(T\#_2 Y) \arrow[r]& H^{1,j}\left(C_{2}\times Y'\right) \\
	H^{0,j}(C_{2}) \arrow[r] \arrow[u,"q^*" left, "\cong" right]&\H^{1,j}(\tilde{T}) \arrow[r]\arrow[u,"id", "\cong" right]& H^{1,j}(T) \arrow[r]\arrow[u,"q^*" left]& H^{1,j}(C_{2})  \arrow[u,"q^*"left, hook]
\end{tikzcd}
\end{center}
A diagram chase shows $q^{*}:H^{1,j}(T)\to H^{1,j}(T\#_2 Y)$ must be injective. Similarly $q^{*}:H^{0,j}(T) \to H^{0,j}(T\#_2 Y)$ is injective.

Now let's consider the long exact sequence associated to the cofiber sequence. The differential $d: H^{*,*}\left(C_{2}\times Y'\right) \to \H^{*+1,*}(\tilde{T})$ is shown in Figure \ref{fig:eqtorconnsumz} in the orientable case and in Figure \ref{fig:nonorientcon} in the nonorientable case. The label $\beta(Y)$ is used to note there are $\beta(Y)$ copies of the red summand $\Sigma^{1,0}\A_0$.

\begin{figure}[ht]
	\begin{tikzpicture}[scale=0.45]
	\draw[help lines,gray] (-1.125,-5.125) grid (5.125, 5.125);
	\draw[<->] (-1,0)--(5,0)node[right]{$p$};
	\draw[<->] (0,-5)--(0,5)node[above]{$q$};
	\zline{0}{red};
	\zline{.7}{red};
	\zantioo{1.3}{blue};
	\foreach \y in {-4,-2,0,2,4}
		\zbox{1}{\y}{blue};
	\foreach \y in {-5,-3,-1,1,3}
		\zbox{1}{\y}{blue};
	\lab{.7}{\beta(Y)}{red};
	\draw[->, thick] (.6,.37)--(1.35,.37);
	\draw[->,thick] (1.35,.6)--(2.7,.6);
	\draw[->, thick] (.6,1.37)--(1.35,1.37);
	\draw[->,thick] (1.35,1.6)--(2.7,1.6);
	\end{tikzpicture}\hspace{0.2in}
	\begin{tikzpicture}[scale=0.45]
	\draw[help lines,gray] (-1.125,-5.125) grid (5.125, 5.125);
	\draw[<->] (-1,0)--(5,0)node[right]{$p$};
	\draw[<->] (0,-5)--(0,5)node[above]{$q$};
	\zline{.7}{red};
	\zantioo{1.3}{blue};
	\foreach \y in {-4,-2,0,2,4}
		{
		\zbox{1}{\y}{blue};
		\zbox{0}{\y}{red};
		};
	\foreach \y in {-5,-3,-1,1,3}
		\draw[blue] (1.5,\y+.5) node{\small{$\bullet$}};
	\lab{.7}{\beta(Y)}{red};
	\end{tikzpicture}\hspace{0.2in}
	\begin{tikzpicture}[scale=0.45]
	\draw[help lines,gray] (-1.125,-5.125) grid (5.125, 5.125);
	\draw[<->] (-1,0)--(5,0)node[right]{$p$};
	\draw[<->] (0,-5)--(0,5)node[above]{$q$};
	\zantioo{1.3}{black};
	\zantizo{-.3}{black};
	\zline{1}{purple};
	\lab{1}{\beta(Y)}{purple};
	\end{tikzpicture}
\caption{The differential when $Y$ is orientable.}
\label{fig:eqtorconnsumz}
\end{figure}
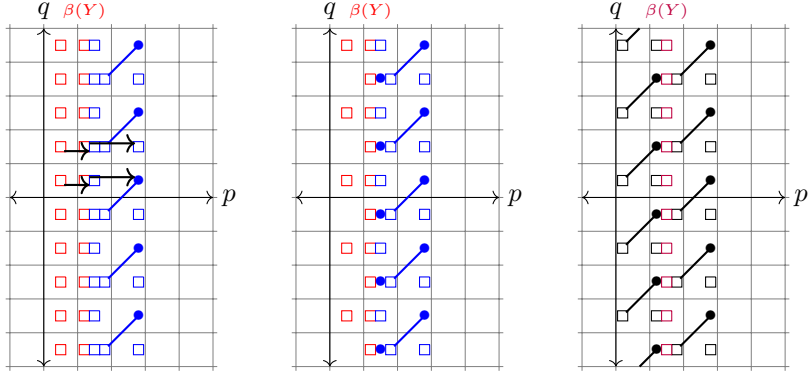
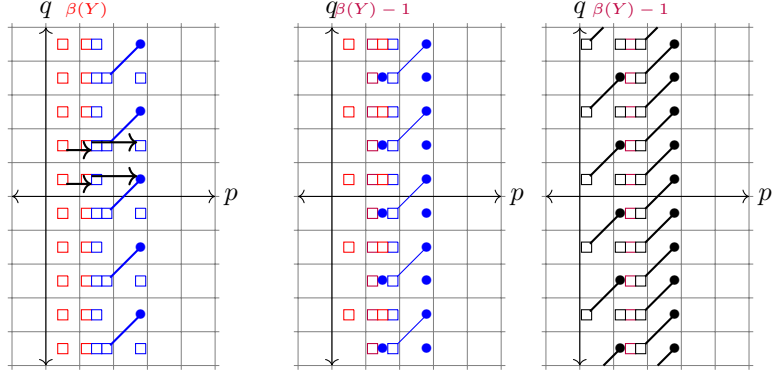
\begin{figure}[ht]
	\begin{tikzpicture}[scale=0.45]
	\draw[help lines,gray] (-1.125,-5.125) grid (5.125, 5.125);
	\draw[<->] (-1,0)--(5,0)node[right]{$p$};
	\draw[<->] (0,-5)--(0,5)node[above]{$q$};
	\zline{0}{red};
	\zline{.7}{red};
	\zantioo{1.3}{blue};
	\foreach \y in {-4,-2,0,2,4}
		\zbox{1}{\y}{blue};
	\foreach \y in {-5,-3,-1,1,3}
		\zbox{1}{\y}{blue};
	\lab{.7}{\beta(Y)}{red};
	\draw[->, thick] (.6,.37)--(1.35,.37);
	\draw[->,thick] (1.35,.6)--(2.7,.6);
	\draw[->, thick] (.6,1.37)--(1.35,1.37);
	\draw[->,thick] (1.35,1.6)--(2.7,1.6);
	\end{tikzpicture}\hspace{0.2in}
	\begin{tikzpicture}[scale=0.45]
	\draw[help lines,gray] (-1.125,-5.125) grid (5.125, 5.125);
	\draw[<->] (-1,0)--(5,0)node[right]{$p$};
	\draw[<->] (0,-5)--(0,5)node[above]{$q$};
	\zline{.7}{purple};
	\foreach\y in {1,3,-1,-3,-5}
		\draw[blue] (1.8,\y+.5)--(2.8,\y+1.5) node{\small{$\bullet$}};
	\foreach\y in {1, 3, -1, -3, -5}
		\zbox{1.3}{\y}{blue};
	\foreach \y in {-4,-2,0,2,4}
		{
		\zbox{1.3}{\y}{blue};
		\zbox{1}{\y}{red};
		\zbox{0}{\y}{red};
		};
	\foreach \y in {-5,-3,-1,1,3}
		{
		\draw[blue] (1.5,\y+.5) node{\small{$\bullet$}};
		\draw[blue] (2.8,\y+.5) node{\small{$\bullet$}};
		};
	\lab{.7}{\beta(Y)-1}{purple};
	\end{tikzpicture}
	\begin{tikzpicture}[scale=0.45]
	\draw[help lines,gray] (-1.125,-5.125) grid (5.125, 5.125);
	\draw[<->] (-1,0)--(5,0)node[right]{$p$};
	\draw[<->] (0,-5)--(0,5)node[above]{$q$};
	\zline{1}{purple};
	\zantizo{-.3}{black};
	\draw[thick] (1.8,4.5)--(2.3,5);
	\foreach\y in {-5,-4,-3,-2,-1,0,1,2,3}
		\draw[thick] (1.8,\y+.5)--(2.8,\y+1.5) node{\small{$\bullet$}};
	\foreach\y in {-5,-4,-3,-2,-1,0,1,2,3,4}
		\zbox{1.3}{\y}{black};
	\lab{1}{\beta(Y)-1}{purple};
	\draw[thick] (2.3,-5)--(2.8,-4.5) node{\small{$\bullet$}};
	\end{tikzpicture}
\caption{The differential when $Y$ is nonorientable.}
\label{fig:nonorientcon}
\end{figure}

We need to determine the differentials in bidegrees $(0,0)$, $(1,0)$, $(0,1)$, and $(1,1)$. Recall $T\cong T^{anti}_1$ and so the quotient $(T\#_2Y)/C_2$ is a nonorientable surface. Thus
	\[
	H^{2}_{sing}((T\#_2Y)/C_2)\cong\Z/2 \quad \text{ and }\quad H^{0}_{sing}((T\#_2Y)/C_2)\cong \Z.
	\] 
Hence $d^{1,0}=d^{0,0}=0$ by the quotient lemma. 

For $d^{0,1}$ recall we have an injective map $q^*: H^{1,1}(T)\to H^{1,1}(T\#_2Y)$ and $H^{1,1}(T)\cong \Z\oplus \Z/2$. The differential cannot be zero because if it were, then $H^{1,1}(T\#_2Y)\cong \Z^{\beta(Y) +2}$ which has no $2$-torsion. Now $d^{0,1}$ is a map $\Z\to \Z^2$, so we can conclude $\coker(d^{0,1})\cong \Z \oplus \Z/2n$ for some $n\geq 1$. To determine $n$, consider the portion of the forgetful long exact sequence: 
	\[
	H^{0,0}(T\#_2Y)\overset{\rho}{\longrightarrow} H^{1,1}(T\#_2Y)\overset{\psi}{\longrightarrow} H^{1}_{sing}(T\#_2Y),
	\]
which is given by
	\[
	\Z \overset{\rho}{\longrightarrow} \Z^{\oplus \beta(Y)+1}\oplus \Z/2n\overset{\psi}{\longrightarrow} H^{1}_{sing}(T\#_2Y).
	\]
The group $H^{1}_{sing}(T\#_2Y)$ has no torsion regardless of the orientability of $Y$, so exactness implies $\Z/2n \subset \ker(\psi)=\text{im}(\rho)$. All elements in the image of $\rho$ are $2$-torsion, so $n=1$ and $\coker(d^{0,1})\cong \Z \oplus \Z/2$. 

To compute the differential in bidegree $(1,1)$, we consider another portion of the forgetful long exact sequence:
\begin{center}
\begin{tikzcd}[column sep=small]
	\arrow[r,"\rho"]&\arrow[r] H^{2,1}(T\#_2Y)\arrow[r,"\psi"]&H^{2}_{sing}(T\#_2Y)\arrow[r]&H^{2,0}(T\#_2Y)\arrow[r,"\rho"]& H^{3,1}(T\#_2 Y)
\end{tikzcd}
\end{center}
which gives the exact sequence
	\[
	0 \to \coker(\rho) \to H^{2}_{sing}(T\#_2Y)\to H^{2,0}(T\#_2Y)\to 0.
	\]
There are two cases for the above based on the orientability of $Y$. If $Y$ is orientable, then we have
\[
0 \to \coker(\rho) \to \Z\to \Z/2\to0,
\]
so $\coker(\rho)\cong \Z$. From Figure \ref{fig:eqtorconnsumz} this is only possible if $H^{2,1}(T\#_2Y)\cong \Z$ and $d^{1,1}$ is zero. Thus we know the entire differential when $Y$ is orientable. If $Y$ is nonorientable, then we have
\[
0 \to \coker(\rho) \to \Z/2\to \Z/2\to0,
\]
so $\coker(\rho)=0$ and $\rho$ is surjective. Observe $H^{2,1}(X)\cong \coker(d^{1,1})$ and thus isomorphic to a quotient of $\Z$. Given that the image of $\rho$ is $2$-torsion, the only possibilities are $H^{2,1}(X\#_2 Y)=0$ or $H^{2,1}(X \#_2 Y)\cong \Z/2$. We can eliminate the former case by considering the following long exact sequence which is induced by the short exact sequence of Mackey functors $\uZ \to \uZ \to \underline{\Z/2}$:
	\[
	\dots \to H^{2,1}(X;\underline{\Z}) \overset{2}{\to} H^{2,1}(X;\underline{\Z}) \to H^{2,1}(X;\underline{\Z/2})\to H^{3,1}(X;\underline{\Z})\to\dots
	\]
From the cofiber sequence pictured in Figure \ref{fig:eqtorconnsumz}, we know $H^{3,1}(X;\underline{\Z})=0$. From the computations in \cite[Theorem 5.5]{Haz20}, we know $H^{2,1}(X;\underline{\Z/2})\cong \Z/2$. Hence $H^{2,1}(X;\underline{\Z}) \neq 0$ and so $\coker(d^{1,1})=\Z/2$. 

We now know all pieces of the differential in both cases, and it remains to solve the extension problems shown in the middle graphs in Figures \ref{fig:eqtorconnsumz} and \ref{fig:nonorientcon}. Beginning with the case when $Y$ is orientable, note the only possibility for a nontrivial extension is for $\rho$ to act on $H^{0,2k}(T\#_2Y)$ nontrivially. We already know this to be the case because $H^{\epsilon,*}(T)$ injects into $H^{\epsilon,*}(T\#_2Y)$ for $\epsilon = 0,1$. We conclude the cohomology of $T\#_2Y$ is isomorphic to module shown in the far right in Figure \ref{fig:eqtorconnsumz}, which is exactly $H^{*,*}(T) \oplus \left(\Sigma^{1,0}\A_0 \right)^{\beta(Y)}$. In the case when $Y$ isn't orientable, we similarly get the $\rho$-connections from $(0,2k)$ to $(1, 2k+1)$ using the injection from $H^{\epsilon,*}(T)$. The other $\rho$ connections from topological degree $1$ to topological degree $2$ follow from the previous paragraph. 
\end{proof}

We also need the cohomology of $S^2_a\#_2 Y$. The computation follows the same outline as the one given above, so we leave the details to the reader. 

\begin{lemma}\label{freesphereconnsum}
Let $Y$ be surface. If $Y$ is orientable, then 
\[H^{*,*}(S^2_a \#_2 Y) \cong \A_2 \oplus \left( \Sigma^{1,0} \A_0 \right)^{\beta(Y)}.\] If $Y$ is nonorientable, then 
\[H^{*,*}(S^2_a \#_2 Y) \cong F_2\oplus \Sigma^{1,0} F_1 \oplus \left( \Sigma^{1,0} \A_0 \right)^{\beta(Y)-1}.\]
\end{lemma}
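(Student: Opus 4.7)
The plan is to mirror the proof of Lemma \ref{ztorconnsumor}, replacing the free torus $T$ by the antipodal sphere $S^2_a$. Write $Y'$ for $Y$ with an open disk removed, and let $\widetilde{S^2_a}$ denote the cofiber of the inclusion $C_2 \hookrightarrow S^2_a$. I would work with the cofiber sequence
\[(C_2 \times Y')_+ \hookrightarrow (S^2_a\#_2 Y)_+ \to \widetilde{S^2_a}.\]
First I would compute $\H^{*,*}(\widetilde{S^2_a})$ directly from the Puppe sequence $C_{2+} \to S^2_{a+} \to \widetilde{S^2_a}$, using the known modules $\A_0$ and $\A_2$; the map $\A_2 \to \A_0$ is induced by an equivariant inclusion $S^0_a \hookrightarrow S^2_a$, and a diagram chase identifies the pieces. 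Then $\H^{*,*}((C_2\times Y')_+) \cong \A_0 \oplus (\Sigma^{1,0}\A_0)^{\oplus \beta(Y)}$ by Lemma \ref{product}.

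To determine the differential $d$ in low topological degrees, I would combine three ingredients. First, the quotient $(S^2_a\#_2 Y)/C_2 \cong \R P^2 \# Y$ is nonorientable regardless of $Y$, so by the quotient lemma $H^{0,0}(S^2_a\#_2 Y) \cong \Z$, $H^{1,0} \cong \Z^{\beta(Y)+1}$, and $H^{2,0} \cong \Z/2$; this pins down $d$ in weight $0$. Second, the collapse map on the $Y$-summands provides an equivariant map $S^2_a\#_2 Y \to S^2_a$ and hence an injection $q^*:\H^{*,*}(S^2_a) \hookrightarrow \H^{*,*}(S^2_a\#_2 Y)$ (verified by the same diagram chase used in Lemma \ref{ztorconnsumor}), which forces a copy of $\A_2$ or of a natural quotient of $\A_2$ to survive into the cohomology of $S^2_a\#_2 Y$. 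Third, for the differential in bidegree $(0,1)$ and $(1,1)$ I would invoke the forgetful long exact sequence, bounding $H^{1,1}$ by the torsion-free group $H^{1}_{sing}(S^2_a\#_2 Y)$ to locate the hidden $\rho$-extensions.

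The case split between $Y$ orientable and $Y$ nonorientable will emerge in bidegree $(1,1)$ and just above. When $Y$ is orientable, the portion of the forgetful sequence computing $H^{2,1}$ via $H^{2,0} \cong \Z/2$ forces $d^{1,1}=0$, preserving the full $\A_2$ in the cokernel. When $Y$ is nonorientable, the same portion combined with the short exact sequence of Mackey functors $\uZ \to \uZ \to \underline{\Z/2}$, plus the known value of $H^{2,1}(S^2_a\#_2Y;\underline{\Z/2})$ from \cite[Theorem 5.5]{Haz20}, forces $H^{2,1}\cong \Z/2$ and therefore a nonzero $d^{1,1}$; this is precisely what breaks $\A_2 = F_2 \oplus \Sigma^{2,1}F_0$ into $F_2 \oplus \Sigma^{1,0}F_1$ at the cost of absorbing one of the $\Sigma^{1,0}\A_0$ summands.

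The main obstacle will be the extension problem in the nonorientable case: verifying that the cohomology reassembles exactly as $F_2 \oplus \Sigma^{1,0}F_1 \oplus (\Sigma^{1,0}\A_0)^{\oplus \beta(Y)-1}$, and that no spurious $\rho$-extensions appear outside those already dictated by $q^*$ and the forgetful sequence. As in Lemma \ref{ztorconnsumor}, I would resolve each potentially ambiguous $\rho$-extension by reading it off the forgetful long exact sequence bidegree by bidegree, using the (torsion-free) singular cohomology of the underlying space $S^2 \# Y \# Y$ as the key input.
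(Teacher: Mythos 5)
Your proposal is correct and takes essentially the route the paper intends for this lemma, which is stated there only as ``the computation follows the same outline as the one given above'' — you are mirroring the proof of Lemma \ref{ztorconnsumor} with $T_1^{anti}$ replaced by $S^2_a$. One small arithmetic slip worth flagging: by the quotient lemma $H^{1,0}(S^2_a\#_2 Y)\cong H^1_{sing}(\R P^2\#Y;\Z)\cong\Z^{\beta(Y)}$, not $\Z^{\beta(Y)+1}$ (the $+1$ is a carry-over from the Klein-bottle quotient in the torus case, where $(T_1^{anti}\#_2 Y)/C_2\cong N_2\#Y$); this does not affect the argument, since only the values of $H^{0,0}$ and $H^{2,0}$ — which you have right — are actually used to kill the weight-zero differential.
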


We now state the main theorem.

\begin{theorem}\label{freeanswer} Let $X$ be a free $C_2$-surface and $\beta=\dim(H^1_{sing}(X;\Z/2))$. Denote by $F_i$ the $\M$-module $x^{-1}\M/(\rho^{i+1})$. There are four cases for the cohomology of $X$.
\begin{enumerate}
\item[(i)] Suppose the underlying space is an odd genus orientable surface. Let $\epsilon=0$ if the action is orientation preserving and $\epsilon=1$ if the action is orientation reversing. Then 
\[H^{*,*}(X) \cong \A_1 \oplus \Sigma^{1,\epsilon}\A_1\oplus \left(\Sigma^{1,0}\A_0\right)^{\oplus \beta/2-1}.\]
\item[(ii)] Suppose the underlying space is an even genus orientable surface. Then 
\[H^{*,*}(X) \cong \A_2 \oplus \left(\Sigma^{1,0}\A_0\right)^{\oplus \beta/2}.\]
\item[(iii)] Suppose the underlying space is nonorientable and $X\cong T_1^{anti}\#_2Y$ for some surface $Y$. Then 
\[H^{*,*}(X) \cong \A_1 \oplus \Sigma^{1,0}F_1\oplus \Sigma^{1,1}F_1 \oplus \left(\Sigma^{1,0}\A_0\right)^{\oplus \beta/2-2}.\]
\item[(iv)] Suppose the underlying space is nonorientable and $X\cong S^2_{a}\#_2Y$ for some surface $Y$. Then 
\[H^{*,*}(X) \cong F_2 \oplus \Sigma^{1,0}F_1 \oplus  \left(\Sigma^{1,0}\A_0\right)^{\oplus \beta/2-1}. \]
\end{enumerate}
\end{theorem}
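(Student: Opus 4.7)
The plan is to apply the classification from Theorem \ref{freeclass} to reduce any free $C_2$-surface $X$ to one of the three forms $T_1^{rot}\#_2 Y$, $T_1^{anti}\#_2 Y$, or $S^2_a\#_2 Y$ for a non-equivariant surface $Y$, and then read off the cohomology directly from Lemma \ref{ztorconnsumor} or Lemma \ref{freesphereconnsum}. The only real task beyond invoking these computations is matching the three equivariant forms with the four stated cases of the theorem and rewriting the invariant $\beta(Y)$ appearing in the summand counts in terms of $\beta=\beta(X)$.

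For the $\beta$-translation I would use additivity of $\dim_{\Z/2}H^1_{sing}(-;\Z/2)$ under ordinary connected sum, a standard Mayer--Vietoris computation. Since $B\#_2 Y$ has underlying surface $Y\#B\#Y$, this gives $\beta(X)=2\beta(Y)+\beta(B)$, so $\beta(Y)=\beta/2-1$ when the base $B$ is one of the two free tori and $\beta(Y)=\beta/2$ when $B=S^2_a$. Substituting into the summand counts from Lemmas \ref{ztorconnsumor} and \ref{freesphereconnsum} recovers the expressions $\beta/2-1$, $\beta/2$, $\beta/2-2$, and $\beta/2-1$ that appear in cases (i)--(iv), respectively.

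The case-matching then proceeds by examining what underlying surface each base produces. A connected sum of an orientable $Y$ with a torus gives odd genus, while with a sphere it gives even genus, and any double connected sum involving a nonorientable $Y$ is nonorientable; this separates (i)--(ii) from (iii)--(iv). In the odd-genus orientable case, I would verify that $T_1^{rot}$ acts orientation-preservingly (a rotation) while $T_1^{anti}$ acts orientation-reversingly (restriction of the antipodal action of $\R^3$), and observe that equivariant connected sum with an orientable $Y$ preserves this dichotomy on the total surface; this identifies $T_1^{rot}\#_2 Y$ with $\epsilon=0$ and $T_1^{anti}\#_2 Y$ with $\epsilon=1$ in (i). For the nonorientable cases the remark following Theorem \ref{freeclass} collapses $T_1^{rot}\#_2 Y \cong T_1^{anti}\#_2 Y$, so only the two bases $T_1^{anti}$ and $S^2_a$ contribute, giving (iii) and (iv), respectively.

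The main potential obstacle is purely bookkeeping: confirming that the orientation-preserving versus reversing data is really detected by the choice of $T_1^{rot}$ versus $T_1^{anti}$ in case (i), and verifying that the four cases of the theorem exhaust (and do not overlap with) the three forms of the classification. Once these verifications are in place, the theorem is a direct substitution into the previously established connected-sum formulas, with no new equivariant cohomology computations required.
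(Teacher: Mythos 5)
Your proposal is correct and takes essentially the same route as the paper's proof: invoke the classification of free $C_2$-surfaces (Theorem \ref{freeclass}), substitute into the connected-sum computations of Lemmas \ref{ztorconnsumor} and \ref{freesphereconnsum}, and translate $\beta(Y)$ into $\beta(X)$ via the additivity formula $\beta(X)=2\beta(Y)+\beta(B)$. The only respect in which you are slightly more explicit than the paper is in spelling out why $T_1^{rot}$ versus $T_1^{anti}$ corresponds to orientation-preserving versus orientation-reversing actions (and that double connected sum with an orientable $Y$ preserves this dichotomy), which the paper treats as self-evident.
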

\begin{proof} For (i) the classification in Theorem \ref{freeclass} implies $X\cong T_1^{rot}\#_2 T_g$ or $X\cong T_1^{anti} \#_2 T_g$. Then the computation follows from Lemma \ref{ztorconnsumor} after noting $\beta(X)=2\beta(T_g)+2$. In (ii), the classification implies $X\cong S^2_a \#_2 T_g$ and so this follows from Lemma \ref{freesphereconnsum} and the fact that $\beta(X)=2\beta(T_g)$. Lastly (iii) and (iv) follow from Lemmas \ref{ztorconnsumor} and \ref{freesphereconnsum} after noting $\beta(X)=2\beta(Y)+2$ in (iii) and $\beta(X)=2\beta(Y)$ in (iv).
\end{proof}

We have the following corollary based on the classification of free $C_2$-surfaces and the above computation. This does not hold for $C_2$-surfaces with nonfree actions. See Remark \ref{nonfreenoninv}.

\begin{cor}
If $X$ and $Y$ are two free $C_2$-surfaces, then $H^{*,*}(X;\underline{\Z}) \cong H^{*,*}(Y;\underline{\Z})$ as $\M$-modules if and only if $X\cong Y$ as $C_2$-spaces.
\end{cor}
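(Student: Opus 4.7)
The backward direction is immediate: a $C_2$-equivariant homeomorphism $X \cong Y$ induces an isomorphism of $\M$-modules on cohomology. The plan for the forward direction is to read off from the $\M$-module $H^{*,*}(X;\uZ)$ a short list of invariants that, together with Theorem \ref{freeclass}, pin down the equivariant homeomorphism type of $X$. Theorem \ref{freeanswer} partitions free $C_2$-surfaces into four cohomological cases, with case (i) further split by the parameter $\epsilon \in \{0,1\}$, giving five families that correspond bijectively to the five equivariant homeomorphism classes arising from the classification. Within each family $\beta$ parameterizes the genus, so the task reduces to recovering $\beta$ from the cohomology and producing invariants that separate the five families.

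First I would apply the forgetful long exact sequence (Lemma \ref{fles}) to recover $H^{*}_{sing}(X;\Z)$, from which one reads off the orientability of the underlying surface and, via universal coefficients, the invariant $\beta = \dim_{\Z/2} H^1_{sing}(X;\Z/2)$. Next I would compute $H^{2,0}(X) \cong H^2_{sing}(X/C_2;\Z)$ via the quotient lemma (Lemma \ref{quotient}), which identifies the orientability of the quotient. Finally, I would track the $\M$-action of $\rho^2$ on the distinguished generator of $H^{0,0}(X) \cong \Z$: inspection of the module descriptions in Theorem \ref{freeanswer} shows that the $(0,0)$-generator lies in an $\A_1$-summand in cases (i) and (iii) and in an $\A_2$- or $F_2$-summand in cases (ii) and (iv); since $\rho^2$ annihilates $1 \in \A_1$ but not $1 \in \A_2$ or $1 \in F_2$, the vanishing of $\rho^2 \cdot 1$ in $H^{2,2}(X)$ cleanly separates those two pairs of cases. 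Tabulating the three invariants (orientability of $X$, orientability of $X/C_2$, and vanishing of $\rho^2 \cdot 1$) across the five families, each family is uniquely identified, and $\beta$ then determines the genus of the non-equivariant summand, so by Theorem \ref{freeclass} the surface $X$ is pinned down.

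The main, mild, obstacle will be the direct case-by-case bookkeeping required to confirm that the three invariants do realize all five distinct combinations. This amounts to reading off $H^{2,0}$ and the action of $\rho^2$ on the $H^{0,0}$-generator from the abbreviated pictures of $\A_1, \A_2, F_1, F_2$ and their shifts in each of the five cases; because the relevant module structure near bidegree $(0,0)$ is completely explicit, the verifications are routine and I do not anticipate any genuine difficulty.
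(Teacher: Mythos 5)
Your proposal is correct and takes essentially the same route as the paper, whose proof is a one-line appeal to Dugger's classification and Theorem \ref{freeanswer}; your three explicit invariants (orientability of $X$, $H^{2,0}(X)$, and whether $\rho^2 \cdot 1 = 0$ in $H^{2,2}(X)$) carry out the pairwise-distinctness check that the paper leaves implicit. One minor imprecision: the $\M$-module structure alone does not determine the forgetful map $\psi$ in Lemma \ref{fles}, so ``recovering $H^*_{sing}(X;\Z)$'' really only produces it up to extension, but this is harmless in the degrees you need (and orientability of $X$ can be read off even more directly from whether $H^{2,q}(X)$ is $\Z$ for some $q$ versus $\Z/2$ for all $q$).
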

\begin{proof} This follows from the complete classification in \cite{Dug19} and the above theorem. 
\end{proof}


\section{Computations for nonfree $C_2$-surfaces}
\label{sec:nonfreecomp}
We compute the cohomology of all $C_2$-surfaces whose $C_2$-action is nontrivial and nonfree. Recall from Section \ref{sec:backgroundsurg} that we can construct new $C_2$-surfaces from old by attaching $S^{1,0}$- or $S^{1,1}$-antitubes, or by removing a neighborhood of an isolated fixed point and sewing in a M\"obius band, which we refer to as $FM$-surgery. The following lemmas explain how such surgery affects the cohomology in some cases.

\begin{lemma} \label{attach_antitube10}
Suppose $X$ is a nontrivial $C_2$-surface whose fixed set contains at least one fixed circle. Then 
$ \H^{*,*}(X+n[S^{1,0}-AT])\cong \H^{*,*}(X) \oplus \left( \Sigma^{1,0}\M \right)^{\oplus n} \oplus \left(\Sigma^{1,1} \M \right)^{\oplus n}.$
\end{lemma}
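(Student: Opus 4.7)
My plan is to induct on $n$. The base case $n=0$ is immediate. For the inductive step, observe that $S^{1,0}$-antitube surgery only adds fixed circles (Lemma \ref{surgeffects}(i)), so $X+n[S^{1,0}-AT]$ still contains a fixed circle. Thus it suffices to prove the case $n=1$: if $Y$ is any nontrivial $C_2$-surface with at least one fixed circle, then
\[\H^{*,*}(Y+[S^{1,0}-AT])\cong \H^{*,*}(Y) \oplus \Sigma^{1,0}\M \oplus \Sigma^{1,1}\M.\]

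For this, set $Y'=Y+[S^{1,0}-AT]$, let $D\sqcup \sigma D$ be the pair of conjugate disks in $Y$ removed during the surgery, let $A=S^{1,0}\times D(\R^{1,1})$ be the attached antitube, and write $Y''=Y\setminus(D\sqcup \sigma D)=Y'\setminus \mathrm{int}(A)$. Both $Y$ and $Y'$ are obtained from $Y''$ by filling in the common boundary $\partial Y''\cong C_2\times S^1$, yielding two cofiber sequences
\[Y''_+ \to Y_+ \to Y/Y''\cong C_{2+}\wedge S^{2,0}, \qquad Y''_+ \to Y'_+ \to Y'/Y''\cong S^{2,1}\vee S^{1,1},\]
where the first cofiber comes from $(C_2\times D^2)/(C_2\times S^1)$ and the second from $A/\partial A = S^{1,0}_+ \wedge S^{1,1}$ followed by the splitting $W_+\wedge Z\simeq (W\wedge Z)\vee Z$ for a based $Z$.

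My strategy follows the template of Section \ref{compstat} applied twice. First, I would use the top sequence to determine $\H^{*,*}(Y'')$ in terms of the known $\H^{*,*}(Y)$: since $\Sigma^{2,0}\A_0$ is concentrated in topological degree $p=2$, the restriction $\H^{p,q}(Y)\to\H^{p,q}(Y'')$ is an isomorphism for $p\neq 1,2$, and in those two degrees a short exact sequence involving the connecting map determines $\H^{*,*}(Y'')$. With $\H^{*,*}(Y'')$ in hand, I would then feed it into the bottom cofiber sequence, whose connecting differential targets $\Sigma^{2,1}\M \oplus \Sigma^{1,1}\M$. To identify this differential I would combine the quotient lemma (to control weight zero), the forgetful long exact sequence (to detect generators modulo $\rho$), and a naturality argument: choosing $D\sqcup \sigma D$ inside the tubular neighborhood of a fixed circle of $Y$, so that $Y''$ visibly contains an antitube whose equator class controls the relevant restrictions.

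The main difficulty lies in the extension problem after the second sequence: one must show that $\coker(d)$ and $\ker(d)$ assemble into $\H^{*,*}(Y)\oplus \Sigma^{1,0}\M\oplus \Sigma^{1,1}\M$ rather than a nontrivial extension. This is where the hypothesis that $Y$ contains a fixed circle is used essentially. A tubular neighborhood of that circle sits inside $Y''$ and contributes a copy of $\M\oplus \Sigma^{1,0}\M$ to $\H^{*,*}(Y'')$; pairing these classes against the $\Sigma^{2,1}\M$ and $\Sigma^{1,1}\M$ summands of the cofiber allows one to exhibit explicit lifts of the generators of the new summands in $\H^{*,*}(Y')$, verifying that they generate free $\M$-submodules. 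The remaining $\rho$-extensions between topological degrees $p=1$ and $p=2$ are then pinned down using the forgetful long exact sequence, completing the argument.
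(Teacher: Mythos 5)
Your proposed route is genuinely different from the paper's, and substantially more involved. The paper uses a single cofiber sequence: include the fixed circle $S^{1,0}$ of the newly attached antitube into $X+n[S^{1,0}-AT]$ and observe that the quotient is precisely the pinched space
\[
\bigl(X+(n-1)[S^{1,0}-AT]\bigr)^\sim \;\simeq\; \bigl(X+(n-1)[S^{1,0}-AT]\bigr) \vee S^{1,1}
\]
by Lemma~\ref{pinched}. The domain of the connecting map is then the \emph{free} module $\Sigma^{1,0}\M$, whose generator lands in bidegree $(2,0)$; Lemma~\ref{miscmani} (applied with $n=2$, $k=1$ since the fixed set has a circle, so minimal codimension one) gives $\H^{2,0}$ of the cofiber equal to zero, so the differential vanishes. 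Because the kernel is free the short exact sequence splits on the nose, and the inductive step is a one-line bookkeeping exercise. There is no extension problem at all. Your decomposition through $Y''$ and the two ``filling'' cofiber sequences is defensible as a setup, but it creates exactly the work the paper's choice of cofiber is designed to avoid: you must first pin down $\H^{*,*}(Y'')$ from a connecting map you do not control a priori, then determine a second differential into $\Sigma^{2,1}\M \oplus \Sigma^{1,1}\M$, and then resolve a genuine $\M$-module extension problem.

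The weakest part of your outline is precisely where the paper's argument is trivial. Your ``pairing the $\M\oplus\Sigma^{1,0}\M$ classes against the $\Sigma^{2,1}\M$ and $\Sigma^{1,1}\M$ summands to exhibit explicit lifts'' is not an argument; it does not explain why those elements of $\H^{*,*}(Y')$ generate \emph{free} $\M$-submodules rather than torsion ones, nor why there are no hidden $\rho$- or $\mu$-extensions between $\coker(d)$ and $\ker(d)$. (Recall from the paper's other computations, e.g.\ Lemma~\ref{x2ncohom} and Lemma~\ref{ztorconnsumor}, that such extensions are routinely nontrivial for surfaces.) You have also not used the two lemmas that do the real work here. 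The hypothesis that the fixed set contains a circle should be routed through Lemma~\ref{miscmani} to kill $\H^{2,0}$ of the cofiber; in your outline it appears only as an unexplained device for producing ``lifts.'' Before this could be called a proof, you would need to actually compute the two differentials and justify the splitting; at that point you would likely discover that the single cofiber sequence $S^{1,0}\hookrightarrow Y' \to \tilde{Y}$ (with $\tilde Y$ the pinched space), together with Lemmas~\ref{pinched} and~\ref{miscmani}, collapses the whole argument.
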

\begin{proof} We induct on the number of antitubes attached. The base case $n=0$ follows immediately. Thus assume for some $n\geq 1$, 
\[
\H^{*,*}(X+(n-1)[S^{1,0}-AT])\cong \H^{*,*}(X) \oplus \left(\Sigma^{1,0}\M \right)^{\oplus n-1} \oplus \left(\Sigma^{1,1}\M \right)^{\oplus n-1}
\]
and consider $X+ n[S^{1,0}-AT]$. We can include a copy of $S^{1,0}$ into one of the attached antitiubes to get the cofiber sequence
\[
S^{1,0} \hookrightarrow X+ n[S^{1,0}-AT] \to (X+(n-1)[S^{1,0}-AT])^\sim
\]
where $(X+(n-1)[S^{1,0}-AT])^\sim=\cof(C_2\hookrightarrow X+(n-1)[S^{1,0}-AT])$ is the usual pinched space. By assumption, the fixed set $X^{C_2}$ is nonempty, so it follows from Lemma \ref{pinched} that
\[
(X+(n-1)[S^{1,0}-AT])^\sim\simeq (X+(n-1)[S^{1,0}-AT]) \vee S^{1,1}.
\]
From Lemma \ref{miscmani}, we know $H^{2,0}(X+(n-1)[S^{1,0}-AT])=0$. 

Let's return to the cofiber sequence. The domain of the differential is the free module $\H^{*,*}(S^{1,0})$. But the codomain is $0$ in bidegree $(2,0)$, so the differential must be zero. Hence the kernel of the differential is free, and so the short exact sequence
\[
0 \to \coker(d) \to \H^{*,*}(X+ n[S^{1,0}-AT]) \to \ker(d) \to 0
\]
splits. We have that
\begin{align*}
\H^{*,*}(X+ n[S^{1,0}-AT]) 
&\cong \coker(d) \oplus \ker(d) \\
&\cong \H^{*,*}((X+(n-1)[S^{1,0}-AT]) \vee S^{1,1}) \oplus \H^{*,*}(S^{1,0})\\
&\cong \left(\H^{*,*}(X) \oplus  \left( \Sigma^{1,0}\M \right)^{\oplus n-1} \oplus \left(\Sigma^{1,1} \M \right)^{\oplus n-1}\oplus \Sigma^{1,1}\M\right)\\
&\quad \oplus \H^{*,*}(S^{1,0})
\\
&\cong \H^{*,*}(X) \oplus \left( \Sigma^{1,0}\M \right)^{\oplus n} \oplus \left(\Sigma^{1,1} \M \right)^{\oplus n},
\end{align*}
where the third isomorphism uses the induction hypothesis. This completes the proof.
\end{proof}
\begin{lemma}\label{attach_antitube11}
Suppose $X$ is a nonfree $C_2$-surface whose fixed set contains only isolated fixed points. Then 
$ \H^{*,*}(X+n[S^{1,1}-AT])\cong \H^{*,*}(X) \oplus \left(\Sigma^{1,1} \M \right)^{\oplus 2n}.$
\end{lemma}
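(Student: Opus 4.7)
My plan is to induct on $n$, parallel to the argument in Lemma \ref{attach_antitube10}. The base case $n=0$ is vacuous. For the inductive step, I would include a copy of $S^{1,1}$ as the equatorial circle $S^{1,1}\times\{0\}$ inside one of the attached antitubes $S^{1,1}\times D(\R^{1,1})$. Collapsing this equator splits the tube into two cones meeting at a point; these cones serve to fill in the two conjugate disks that had been removed to attach the antitube, but with their centers identified. Thus the quotient will be $X+(n-1)[S^{1,1}-AT]$ with a pair of conjugate points collapsed to one, that is, the pinched space $(X+(n-1)[S^{1,1}-AT])^\sim$. This gives the cofiber sequence
\[
S^{1,1}\hookrightarrow X+n[S^{1,1}-AT] \to (X+(n-1)[S^{1,1}-AT])^\sim.
\]

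Next, since $X$ has at least one isolated fixed point and $S^{1,1}$-surgery preserves this (Lemma \ref{surgeffects}), $X+(n-1)[S^{1,1}-AT]$ is nontrivial and nonfree, so Lemma \ref{pinched} lets me replace the pinched space by $(X+(n-1)[S^{1,1}-AT])\vee S^{1,1}$. The resulting differential
\[
d\colon \H^{*,*}(S^{1,1}) \to \H^{*+1,*}\bigl((X+(n-1)[S^{1,1}-AT])^\sim\bigr)
\]
is a degree $(1,0)$ $\M$-module map whose domain $\Sigma^{1,1}\M$ is free on a generator in bidegree $(1,1)$, so it is determined by $d^{1,1}$. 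The codomain in bidegree $(2,1)$ equals
\[
\H^{2,1}(X+(n-1)[S^{1,1}-AT])\oplus \H^{2,1}(S^{1,1}),
\]
and I expect both summands to vanish: the second is $\M$ in bidegree $(1,0)$, which is $0$ by inspection of $\M$, while the first vanishes by Lemma \ref{miscmani} applied to the $2$-dimensional nonfree $C_2$-manifold $X+(n-1)[S^{1,1}-AT]$ whose fixed set has minimum codimension $k=2$. Hence $d=0$.

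Once $d=0$ is established, $\ker(d)=\Sigma^{1,1}\M$ is free, so the short exact sequence
\[
0 \to \H^{*,*}(X+(n-1)[S^{1,1}-AT])\oplus \Sigma^{1,1}\M \to \H^{*,*}(X+n[S^{1,1}-AT]) \to \Sigma^{1,1}\M \to 0
\]
splits as $\M$-modules, and combining with the inductive hypothesis will finish the proof. The only real content beyond the template of the previous lemma is the vanishing of the relevant $\H^{2,1}$, which reduces via Lemma \ref{pinched} to a direct application of Lemma \ref{miscmani}; I expect this to be the main step to justify carefully, though it follows cleanly from the hypothesis that the fixed set consists only of isolated points.
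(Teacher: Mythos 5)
Your proposal is correct and takes exactly the approach the paper intends (the paper's own proof is a one-liner saying the argument is analogous to Lemma \ref{attach_antitube10} with $S^{1,1}$ included instead of $S^{1,0}$). You correctly identify the one place the argument differs: since the generator of $\H^{*,*}(S^{1,1})$ lives in bidegree $(1,1)$ rather than $(1,0)$, the relevant vanishing is $\H^{2,1}$, which follows from Lemma \ref{miscmani} because the fixed set of $X+(n-1)[S^{1,1}-AT]$ consists only of isolated points, giving minimum codimension $k=2$.
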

\begin{proof}
The proof is analogous to the proof of Lemma \ref{attach_antitube10}, except we include $S^{1,1}$ instead of $S^{1,0}$ into one of the attached antitubes.
\end{proof}

\begin{lemma}\label{attach_fm}
Suppose $X$ is a $C_2$-surface whose fixed set contains at least one fixed circle and at least one isolated fixed point. Then $\H^{*,*}(X+n[FM]) \cong \H^{*,*}(X) \oplus \left(\Sigma^{1,0} \M \right)^{\oplus n}$.
\end{lemma}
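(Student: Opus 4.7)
I would prove this by induction on $n$, mirroring the pattern of Lemmas \ref{attach_antitube10} and \ref{attach_antitube11}. The base case $n = 0$ is immediate. For the inductive step, set $Y = X + (n-1)[FM]$ and let $M$ denote the M\"obius band attached in the $n$-th surgery. Writing $X + n[FM] = Y' \cup_{S^1_a} M$ where $Y' = Y - \mathrm{int}(D)$ for an equivariant disk $D \cong D(\R^{2,2})$ around the isolated fixed point being replaced, I would analyze the cofiber sequence
\[
M \hookrightarrow X + n[FM] \to (X + n[FM])/M.
\]
The key geometric observation is that $(X + n[FM])/M = Y'/S^1_a$, and since $D$ is equivariantly contractible, the quotient map $Y \to Y/D \cong Y'/S^1_a$ is an equivariant homotopy equivalence. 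Hence the cofiber is equivariantly homotopy equivalent to $X + (n-1)[FM]$.

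Next I would identify $\H^{*,*}(M) \cong \Sigma^{1,0}\M$: the M\"obius band equivariantly deformation retracts onto its fixed core circle $S^{1,0}$ via the linear contraction along each fiber $D(\R^{1,1})$, which is equivariant because the origin is fixed in the sign fiber. The associated long exact sequence then involves a differential $d : \H^{*,*}(M) \to \H^{*+1,*}(X + (n-1)[FM])$ whose value on the generator of $\Sigma^{1,0}\M$ lives in $\H^{2,0}(X + (n-1)[FM])$. Since $X$ contains a fixed circle and $FM$-surgery preserves all existing fixed circles (Lemma \ref{surgeffects}(iii)), the space $X + (n-1)[FM]$ has a fixed circle, so the minimum codimension of its fixed set is $k = 1$. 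Applying Lemma \ref{miscmani} gives $\H^{2,0}(X + (n-1)[FM]) = 0$, forcing $d$ to vanish on the generator. By $\M$-linearity, $d = 0$ identically.

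The long exact sequence therefore breaks into short exact sequences
\[
0 \to \H^{*,*}(X + (n-1)[FM]) \to \H^{*,*}(X + n[FM]) \to \Sigma^{1,0}\M \to 0,
\]
which split because $\Sigma^{1,0}\M$ is free as an $\M$-module. Combined with the inductive hypothesis $\H^{*,*}(X + (n-1)[FM]) \cong \H^{*,*}(X) \oplus (\Sigma^{1,0}\M)^{\oplus n-1}$, this completes the induction. The main obstacle is the homotopical identification of the cofiber $(X + n[FM])/M$ with $X + (n-1)[FM]$; once that geometric bookkeeping is correctly done and the hypotheses of Lemma \ref{miscmani} are verified (in particular, that $X + (n-1)[FM]$ has a fixed circle so that $k = 1$), the remainder of the argument is formal.
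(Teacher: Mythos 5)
Your proof is correct and follows essentially the same approach as the paper's: induction on $n$, the cofiber sequence collapsing the newly attached M\"obius band (the paper uses a closed neighborhood $U\simeq M$ rather than $M$ itself, but this is only a technicality to ensure a cofibration), the vanishing of the differential via Lemma \ref{miscmani}, and the splitting because $\Sigma^{1,0}\M$ is free. The extra details you supply — the explicit identification of the cofiber with $X+(n-1)[FM]$ via collapsing $D$, the equivariant retraction of $M$ onto its core circle, and the verification that $X+(n-1)[FM]$ retains a fixed circle so that the minimum codimension is $1$ — are all correct and match what the paper leaves implicit.
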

\begin{proof}
The proof is similar to the proof of Lemma \ref{attach_antitube10}, though we want to consider a slightly different cofiber sequence in the induction step. Let $n\geq 1$ and consider $X+n[FM]$. Let $U$ be an equivariant, closed neighborhood around one of the attached M\"obius bands $M$ such that $U\simeq M$. If we quotient by $U$, then the attached M\"obius band is turned back into an isolated fixed point. Thus we have the cofiber sequence
\[
U \hookrightarrow X+n[FM] \to (X+n[FM])/U\simeq X+(n-1)[FM].
\]
Observe $U\simeq M\simeq S^{1,0}$ and from Lemma \ref{miscmani}, $H^{2,0}(X+(n-1)[FM])=0$. Thus the differential from the above cofiber sequence must be zero. This gives the short exact sequence
\[
0 \to \H^{*,*}(X+(n-1)[FM]) \to \H^{*,*}(X+n[FM] ) \to \H^{*,*}(S^{1,0})\to 0.
\]
Since $ \H^{*,*}(S^{1,0})$ is free, this splits, and so
\[
\H^{*,*}(X+n[FM] )\cong \H^{*,*}(X+(n-1)[FM]) \oplus \H^{*,*}(S^{1,0}) \cong \H^{*,*}(X) \oplus \left(\Sigma^{1,0} \M \right)^{\oplus n}.
\]
Note we do not get a copy of $\Sigma^{1,1}\M$ as we did in the proof of Lemma \ref{attach_antitube10} because the cofiber in this sequence is not a pinched space.
\end{proof}

\subsection{Orientable surfaces} We now consider nontrivial, nonfree $C_2$-surfaces whose underlying space is orientable. All such surfaces can be built by attaching antitubes to a free $C_2$-surface or to a surface of the form $S^{2,2}\#_2 T_g$ or $S^{2,1}\#_2 T_g$. We start by computing the cohomology of these connected sums, and then consider attaching handles to free $C_2$-surfaces.

\begin{lemma}\label{orientcomp1}
If $X=S^{2,\epsilon}\#_2 T_g$ for $\epsilon=1,2$, then $\H^{*,*}(X)\cong \left(\Sigma^{1,0}\A_0\right)^{g}\oplus \Sigma^{2,\epsilon}\M$.
\end{lemma}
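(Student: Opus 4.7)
The strategy follows the computational template of Section~\ref{compstat}, using the cofiber sequence that pinches the attached tori. Let $T_g' := T_g \setminus \mathrm{int}(D^2)$. Since $X$ is built by gluing $C_2 \times T_g'$ to $S^{2,\epsilon}$ along two conjugate disks, a closed equivariant tubular neighborhood of the attached copies of $T_g'$ in $X$ is equivariantly homotopy equivalent to $C_2 \times T_g'$. Collapsing this neighborhood gives the cofiber sequence
\[(C_2 \times T_g')_+ \hookrightarrow X_+ \to \widetilde{S^{2,\epsilon}},\]
where $\widetilde{S^{2,\epsilon}} := \cof(C_2 \hookrightarrow S^{2,\epsilon})$. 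By Lemma~\ref{pinched}, $\widetilde{S^{2,\epsilon}} \simeq S^{2,\epsilon} \vee S^{1,1}$, so $\H^{*,*}(\widetilde{S^{2,\epsilon}}) \cong \Sigma^{2,\epsilon}\M \oplus \Sigma^{1,1}\M$. By Lemma~\ref{product}, $\H^{*,*}\bigl((C_2 \times T_g')_+\bigr) \cong \A_0 \oplus (\Sigma^{1,0}\A_0)^{\oplus 2g}$.

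First I would identify the connecting differential $d$ on the $\A_0$ summand via naturality. The collapse $T_g' \to pt$ induces a map from our cofiber sequence to the simpler sequence $C_{2+} \hookrightarrow S^{2,\epsilon}_+ \to \widetilde{S^{2,\epsilon}}$, which is the identity on the cofiber. In the simpler long exact sequence, the known value $\H^{*,*}(S^{2,\epsilon}) \cong \Sigma^{2,\epsilon}\M$ forces the differential to send $\A_0$ isomorphically onto the $\Sigma^{1,1}\M$ summand of $\H^{*+1,*}(\widetilde{S^{2,\epsilon}})$; by naturality, $d$ does the same on the $\A_0$ piece of $\H^{*,*}\bigl((C_2 \times T_g')_+\bigr)$.

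Next I would show $d$ vanishes on the $(\Sigma^{1,0}\A_0)^{\oplus 2g}$ summand. By the quotient lemma, $\H^{p,0}(X) \cong H^p_{sing}(X/C_2)$. Since $S^{2,2}/C_2 \cong S^2$ and $S^{2,1}/C_2 \cong D^2$, in both cases $X/C_2$ is homotopy equivalent to a genus-$g$ orientable surface (closed when $\epsilon=2$, with one boundary circle when $\epsilon=1$), so $H^1_{sing}(X/C_2) \cong \Z^{2g}$. Matching ranks in the long exact sequence (with the $\A_0$ contribution already accounted for) forces $d^{1,0}=0$ on the entire $(\Sigma^{1,0}\A_0)^{\oplus 2g}$ piece, and the $\M$-module structure together with Lemma~\ref{rho_local} propagates the vanishing to all bidegrees.

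The final step is to solve the extension
\[0 \to \Sigma^{2,\epsilon}\M \to \H^{*,*}(X) \to (\Sigma^{1,0}\A_0)^{\oplus 2g} \to 0.\]
Theorem~\ref{topm} provides an $\M$-linear injection $\Sigma^{2,\epsilon}\M \hookrightarrow \H^{*,*}(X)$ realized by the quotient to a representation sphere about a fixed point of minimal codimension $\epsilon$, producing the desired splitting of the submodule. The main obstacle is eliminating potential hidden $\rho$-extensions between $\rho$-torsion generators of the $\Sigma^{1,0}\A_0$ summands and $2$-torsion classes in the negative cone of $\Sigma^{2,\epsilon}\M$: these must be excluded bidegree by bidegree using the forgetful long exact sequence (Lemma~\ref{fles}) together with the known singular cohomology of the orientable surface $X$, paralleling the concluding paragraph of the proof of Lemma~\ref{x2ncohom}.
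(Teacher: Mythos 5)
Your cofiber sequence (collapsing a neighborhood of the attached copies of $T_g'$) is genuinely different from the paper's, which instead includes the equatorial circle $S^{1,\epsilon-1}\hookrightarrow S^{2,\epsilon}\#_2 T_g$ with cofiber $C_{2+}\wedge T_g$. In the paper's sequence the domain of the differential is the free module $\Sigma^{1,\epsilon-1}\M$, so $d$ is pinned down by a single generator and Theorem~\ref{topm} then forces $d^{1,\epsilon-3}$ to be multiplication by $\pm 2$. In your sequence the domain is $\rho$-torsion, and this is exactly where the argument breaks.

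The claim that $d$ sends the $\A_0$ summand ``isomorphically onto'' the $\Sigma^{1,1}\M$ summand of $\H^{*+1,*}(\widetilde{S^{2,\epsilon}})$ cannot be correct. At the crudest level the groups do not match: $\A_0$ has a $\Z$ in bidegree $(0,0)$ while $\Sigma^{1,1}\M$ is zero in bidegree $(1,0)$. More structurally, $d$ is $\M$-linear and $\A_0\cong x^{-1}\M/(\rho)\oplus\Sigma^{0,1}x^{-1}\M/(\rho)$ is annihilated by $\rho$, so $\im(d)$ is contained in the $\rho$-torsion of $\Sigma^{2,\epsilon}\M\oplus\Sigma^{1,1}\M$ and in particular cannot contain the free generator of $\Sigma^{1,1}\M$. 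Concretely, in the comparison sequence $C_{2+}\hookrightarrow S^{2,\epsilon}_+\to\widetilde{S^{2,\epsilon}}$ one has $H^{0,1}(S^{2,\epsilon})=0$ and $H^{1,1}(S^{2,\epsilon})\cong\Z/2$, which forces $d^{0,1}\colon\Z\to\Z$ to be multiplication by $\pm2$; likewise $d^{0,2k}=0$ since the target vanishes. Hence $\coker(d)$ is not $\Sigma^{2,\epsilon}\M$ and $\ker(d)$ is not $(\Sigma^{1,0}\A_0)^{\oplus 2g}$ --- the kernel picks up a $\rho$-torsion piece of $\A_0$ --- so the short exact sequence you write at the end is not the one that actually arises, and the remaining extension analysis would need to be redone from the correct $\ker(d)$ and $\coker(d)$. (Incidentally, you correctly target $(\Sigma^{1,0}\A_0)^{\oplus 2g}$, which agrees with Theorem~\ref{nonfreeoranswerz} and Figure~\ref{fig:s21tg}; the exponent $g$ printed in the statement of Lemma~\ref{orientcomp1} appears to be a typo.)
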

\begin{proof}
We can include an equatorial circle from $S^{2,\epsilon}$ to get a cofiber sequence
\begin{equation}
S^{1,\epsilon-1}\hookrightarrow S^{2,\epsilon}\#_2T_g \to C_{2+}\wedge T_g.
\end{equation}
The long exact sequence for the associated cofiber sequence in the case $\epsilon=2$ is illustrated in Figure \ref{fig:s21tg} below.

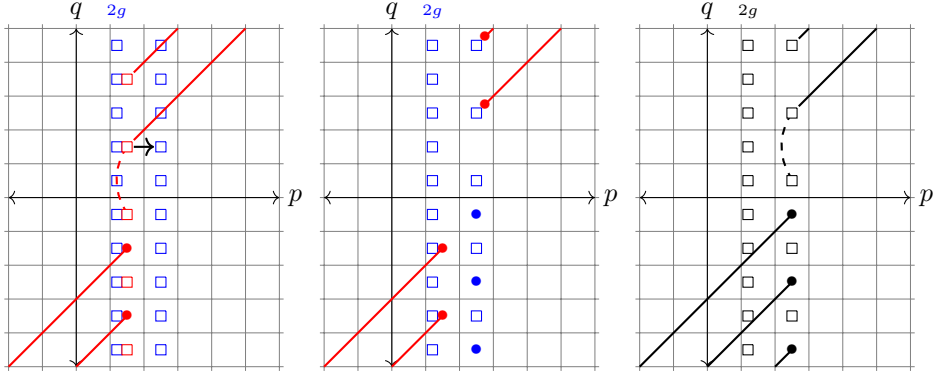
\begin{figure}[ht]
\begin{tikzpicture}[scale=0.45]
\draw[help lines,gray] (-2.125,-5.125) grid (6.125, 5.125);
\draw[<->] (-2,0)--(6,0)node[right]{$p$};
\draw[<->] (0,-5)--(0,5)node[above]{$q$};
	\zline{.7}{blue};
	\lab{.7}{2g}{blue};
	\zline{2}{blue};
	\zcone{1}{1}{red};
	\draw[->, thick] (1.7,1.5)--(2.3,1.5);
	\zbox{1}{-5}{red};
\end{tikzpicture}
\begin{tikzpicture}[scale=0.45]
\draw[help lines,gray] (-2.125,-5.125) grid (6.125, 5.125);
\draw[<->] (-2,0)--(6,0)node[right]{$p$};
\draw[<->] (0,-5)--(0,5)node[above]{$q$};
	\zline{.7}{blue};
	\lab{.7}{2g}{blue};
	\foreach \y in {-4,-2,0,2,4}
		\zbox{2}{\y}{blue};
	\foreach \y in {-3,-1,-5}
		\draw[blue] (2.5,\y+.5) node{\dot};
	\foreach \y in {2,4}
		\draw[red,thick] (2.75,\y+.75)node{\dot}--(7-\y,5);
	\foreach \y in {-4,-2}
		\draw[red,thick] (1.5,\y+.5)node{\dot}--(-4-\y,-5);
\end{tikzpicture}
\begin{tikzpicture}[scale=0.45]
\draw[help lines,gray] (-2.125,-5.125) grid (6.125, 5.125);
\draw[<->] (-2,0)--(6,0)node[right]{$p$};
\draw[<->] (0,-5)--(0,5)node[above]{$q$};
	\zline{.7}{black};
	\lab{.7}{2g}{black};
	\zcone{2}{2}{black};
	\zbox{2}{-4}{black};
	\draw[thick] (2,-5)--(2.5,-4.5) node{\dot};
\end{tikzpicture}
\caption{The differential $d:\H^{*,*}(S^{1,1})\to \H^{*,*}(C_{2+}\wedge T_g)$.}
\label{fig:s21tg}
\end{figure}

We know $\Sigma^{2,\epsilon}\M$ is a submodule of $\H^{*,*}(X)$ by Theorem \ref{topm}. In particular, $\H^{2,\epsilon-3}(X)$ must have a $2$-torsion subgroup coming from $\mu$. This can only happen if $d^{1,\epsilon-3}$ is multiplication by $2n$. We can see $n$ must be $\pm 1$ by noting $H^{2}_{sing}(X)=\Z$, and so by the forgetful long exact sequence, all torsion elements in $H^{2,q}(X)$ must be in the image of $\rho$ for all weights $q$. Using the module structure, we see that $d^{1,\epsilon-1}$ is an isomorphism. This then determines all of $d$.
 
It remains to solve the extension problem shown on the middle graph in Figure \ref{fig:s21tg}. Using the forgetful long exact sequence or another application of Theorem \ref{topm}, we can conclude the extension problem is solved by the picture on the right.
\end{proof}

\begin{lemma}\label{orientcomp2}
If $X=(S^2_a\#_2 T_g)+[S^{1,0}-AT]$ or $X=(T_1^{anti}\#_2 T_g)+[S^{1,0}-AT]$, then \[\H^{*,*}(X)\cong \left(\Sigma^{1,0}\A_0 \right)^{\beta(X)/2} \oplus \Sigma^{2,1}\M.\]
\end{lemma}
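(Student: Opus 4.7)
The proof uses the same kind of cofiber sequence as Lemma \ref{attach_antitube10}, namely
\[S^{1,0} \hookrightarrow X \to Y^{\sim},\]
obtained by collapsing the core $S^{1,0}$ of the attached $S^{1,0}$-antitube, where $Y=S^2_a\#_2 T_g$ or $Y=T_1^{anti}\#_2 T_g$ and $Y^{\sim}=\cof(C_2\hookrightarrow Y)$. The essential new difficulty compared with Lemma \ref{attach_antitube10} is that $Y$ is a free $C_2$-surface, so Lemma \ref{pinched} does \emph{not} apply and $Y^{\sim}$ is not simply $Y\vee S^{1,1}$; we must compute $\tilde{H}^{*,*}(Y^{\sim})$ by hand.

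The plan is to first compute $\tilde{H}^{*,*}(Y^{\sim})$ from the auxiliary cofiber sequence $C_{2+}\to Y_+\to Y^{\sim}$. The cohomology of $Y$ is known from Theorem \ref{freeanswer}, and the restriction map $H^{*,*}(Y)\to \A_0$ is easily identified in topological degree $0$: in even weights it takes a generator to a generator (via the forgetful map together with the quotient lemma applied to $Y/C_2$), and in odd weights the source vanishes. Everything else follows from $\M$-linearity and the resulting extension problems, which are resolved with the forgetful long exact sequence exactly as in Lemma \ref{pinchedtor}. This generalizes that lemma from the $g=0$ case to all $g$.

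Next, I would analyze the differential $d\colon H^{*,*}(S^{1,0})\to\tilde{H}^{*+1,*}(Y^{\sim})$ in the main cofiber sequence. Since the underlying space of $X$ is orientable and $X$ has a single fixed circle of codimension $1$, Theorem \ref{topm} provides a copy of $\Sigma^{2,1}\M$ inside $\H^{*,*}(X)$, while Lemma \ref{miscmani} forces $H^{2,0}(X)=0$. These two constraints, together with the quotient lemma applied to $X/C_2$ (which is nonorientable because the $S^{1,0}$-antitube introduces a crosscap in the orbit space), pin down $d$ on a set of $\M$-module generators and hence everywhere by linearity, just as was done for $S^{2,\epsilon}\#_2 T_g$ in Lemma \ref{orientcomp1}.

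Finally, one solves the extension problem $0\to\coker(d)\to\H^{*,*}(X)\to\ker(d)\to 0$. The $\Sigma^{2,1}\M$ summand is guaranteed by Theorem \ref{topm}; the copies of $\Sigma^{1,0}\A_0$ come from the free part of $\H^{*,*}(Y)$ together with one additional summand contributed by the pinching, and the $\rho$-extensions are resolved using the forgetful long exact sequence as in Lemma \ref{orientcomp1}. A bookkeeping check using Lemma \ref{surgeffects} confirms that the number of $\Sigma^{1,0}\A_0$ summands equals $\beta(X)/2$. The main obstacle is the very first step: computing $\tilde{H}^{*,*}(Y^{\sim})$ uniformly across both families (whose cohomologies contain the quite different summands $\A_2$ versus $\A_1\oplus\Sigma^{1,1}\A_1$) and carefully tracking the $\M$-module structure through the two successive extension problems.
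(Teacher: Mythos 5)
Your proposal follows the same strategy as the paper's proof: the cofiber sequence $S^{1,0}\hookrightarrow X\to Y^{\sim}$, determination of the differential via Lemma \ref{miscmani}, and resolution of the extension problem via Theorem \ref{topm}. The paper dismisses the intermediate computation of $\H^{*,*}(Y^{\sim})$ as routine, giving only the answer $(\Sigma^{1,0}\A_0)^{\oplus 2g}\oplus\Sigma^{1,1}\A_1$ for $Y=S^2_a\#_2 T_g$ and remarking the other case is similar; your sketch of how to carry this out via the pinching cofiber sequence $C_{2+}\to Y_+\to Y^\sim$, identifying the restriction map on topological degree $0$ and appealing to the forgetful sequence as in Lemma \ref{pinchedtor}, is the right way to fill in that gap and is indeed the only genuinely new work not already present in the cited lemmas.

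One correction to a side remark: you claim $X/C_2$ is nonorientable ``because the $S^{1,0}$-antitube introduces a crosscap in the orbit space.'' That is not what happens. The $S^{1,0}$-antitube has quotient $S^1\times[0,1]$, so attaching it to $Y$ replaces an open disk of $Y/C_2$ with a collar of a boundary circle; the fixed circle becomes a boundary component of $X/C_2$, not a crosscap. The relevant conclusion $H^{2,0}(X)=0$ is true by the quotient lemma because $X/C_2$ is a surface \emph{with nonempty boundary}, regardless of orientability. (In any case you also correctly invoke Lemma \ref{miscmani} for the same fact, so the argument goes through; this remark is just for accuracy.) Note also that $Y/C_2$ is already nonorientable for both $Y=S^2_a\#_2 T_g$ (quotient $\R P^2\# T_g$) and $Y=T_1^{anti}\#_2 T_g$ (quotient $K\# T_g$), so the antitube is not what introduces the nonorientability even before reaching the boundary issue.
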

\begin{proof}
Let's first consider $X=(S^2_a\#_2 T_g)+[S^{1,0}-AT]$. We make use of the cofiber sequence arising from including the fixed circle in the attached antitube
\[
S^{1,0}\hookrightarrow X \to \left(S^{2}_a\#_2T_g\right)^\sim,
\]
where as usual $\left(S^{2}_a\#_2T_g\right)^\sim=\cof(C_2\hookrightarrow S^{2}_a\#_2T_g)$. We know the cohomology of $S^{2}_a\#_2T_g$ from Theorem \ref{freeanswer}, and it is now a routine computation to show that \[\H^{*,*}\left(\left(S^{2}_a\#_2T_g\right)^\sim\right) \cong \left(\Sigma^{1,0}\A_0\right)^{\oplus 2g} \oplus \Sigma^{1,1}\A_1.\] 
In Figure \ref{fig:basecofib}, we illustrate the differential
\[d:\tilde{H}^{*,*}(S^{1,0}) \to \tilde{H}^{*+1,*}\left(\left(S^{2}_a\#_2T_g\right)^\sim\right).\]

\begin{figure}[ht]
\begin{tikzpicture}[scale=0.45]
\draw[help lines,gray] (-2.125,-5.125) grid (6.125, 5.125);
\draw[<->] (-2,0)--(6,0)node[right]{$p$};
\draw[<->] (0,-5)--(0,5)node[above]{$q$};
\zline{.7}{blue};
\lab{.7}{2g}{blue};
\zcone{1}{0}{red};
\zantioo{1.3}{blue};
\draw[->, thick] (1.7,.5)--(2.7,.5);
\end{tikzpicture}
\begin{tikzpicture}[scale=0.45]
\draw[help lines,gray] (-2.125,-5.125) grid (6.125, 5.125);
\draw[<->] (-2,0)--(6,0)node[right]{$p$};
\draw[<->] (0,-5)--(0,5)node[above]{$q$};
\zline{.7}{blue};
\lab{0.7}{2g}{blue};
\draw[red, thick] (2.3,1.3)node{\small{$\bullet$}}--(6,5);
\draw[red, thick] (2.3,3.3)node{\small{$\bullet$}}--(4,5);
\draw[red, thick] (1.5,-2.5)node{\small{$\bullet$}}--(-1,-5);
\draw[red, thick] (1.5,-4.5)node{\small{$\bullet$}}--(1,-5);
\foreach \y in {-4,-2,0,2,4}
	\zbox{1}{\y}{red};
\foreach \y in {-5,-3}
	\draw[thick,blue] (1.8,\y+.6)--(2.7,\y+1.5)node{\small{$\bullet$}};
\foreach \y in {-5,-3,-1,1,3}{
	\zbox{1.3}{\y}{blue};
	\zbox{2.2}{\y}{blue};
	}
\end{tikzpicture}
\begin{tikzpicture}[scale=0.45]
\draw[help lines,gray] (-2.125,-5.125) grid (6.125, 5.125);
\draw[<->] (-2,0)--(6,0)node[right]{$p$};
\draw[<->] (0,-5)--(0,5)node[above]{$q$};
	\zline{1}{black};
	\lab{1}{2g+1}{black};
	\zcone{2}{1}{black};
	\zbox{2}{-5}{black};
\end{tikzpicture}
\caption{$d:\tilde{H}^{*,*}(S^{1,0})\to \tilde{H}^{*+1,*}(\left(S^{2}_a\#_2T_g\right)^\sim)$.}
\label{fig:basecofib}
\end{figure}

The fixed set $X^{C_2}$ has codimension $1$. Hence $H^{2,0}(X)=0$ by Lemma \ref{miscmani}, and so $d^{1,0}$ must be surjective. 

Now that we have determined the differential, we must solve the extension problem shown in the middle grid in Figure \ref{fig:basecofib}. From Theorem \ref{topm}, we know this must be nontrivial since $\Sigma^{2,1}\M$ is a submodule of $\H^{*,*}(X)$. This guarantees $\rho^2$-connections from bidegrees $(0,-2k-2)$ to $(2,-2k)$ for $k\geq 1$. Now consider bidegree $(2,2k+1)$ for some $k\geq 0$. If the extension of abelian groups was trivial, then the action by $\rho x^k$ from bidegree $(2,-1)$ to $(3,2k)$ would be forced to be trivial (note we can't have nontrivial $\rho$-extensions from $\coker(d)$ to $\ker(d)$ because $\coker(d)$ is the submodule of $\H^{*,*}(X)$). This contradicts that $\Sigma^{2,1}\M$ is a submodule, so the abelian group extension must be nontrivial. 

After a basis change in bidegrees $(1,-2k-1)$, we can conclude the cohomology is given by the right-hand grid. Lastly observe \[\beta((S^2_a\#_2 T_g)+[S^{1,0}-AT])=4g+2,\] so $\beta(X)/2=2g+1$, which completes this case. The computation for $X=(T_1^{anti}\#_2 T_g)+[S^{1,0}-AT]$ will follow the same steps.
\end{proof}

\begin{theorem}\label{nonfreeoranswerz} Let $X$ be a nonfree, nontrivial $C_2$-surface whose underlying space is orientable. Let $F$ be the number of isolated fixed points, $C$ be the number of fixed circles, and $\beta$ be the dimension of $H^{1}_{sing}(X;\Z/2)$. There are two cases for the cohomology of $X$:
\begin{enumerate}
\item[(i)] Suppose $F\neq 0$. Then \[\H^{*,*}(X)\cong \left(\Sigma^{1,1}\M\right)^{\oplus F-2} \oplus \left(\Sigma^{1,0}\A_0\right)^{\oplus \frac{\beta-F}{2}+1} \oplus \Sigma^{2,2}\M.\]
\item[(ii)] Suppose $C\neq 0$. Then
\[\H^{*,*}(X)\cong \left(\Sigma^{1,0}\M\right)^{\oplus C-1} \oplus \left(\Sigma^{1,1}\M\right)^{\oplus C-1}\oplus \left(\Sigma^{1,0}\A_0\right)^{\oplus \frac{\beta-2C}{2}+1} \oplus \Sigma^{2,1}\M.\]
\end{enumerate}
\end{theorem}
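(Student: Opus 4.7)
The plan is to leverage the classification in Theorem \ref{nonfreeclasstorus}, together with the connected-sum computations in Lemmas \ref{orientcomp1}, \ref{orientcomp2}, and the surgery lemmas \ref{attach_antitube11}, \ref{attach_antitube10}. For an orientable, nonfree, nontrivial $C_2$-surface $X$, the classification forces the fixed set to consist purely of isolated points or purely of fixed circles, so cases (i) and (ii) exactly partition the possibilities. The bookkeeping of the invariants $F$, $C$, $\beta$ will be handled by Lemma \ref{surgeffects}.

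For case (i), the classification gives $X \cong (S^{2,2}\#_2 T_g) + n[S^{1,1}\text{-}AT]$ for some $g, n \geq 0$. First I would apply Lemma \ref{orientcomp1} with $\epsilon = 2$ to get $\H^{*,*}(S^{2,2}\#_2 T_g) \cong (\Sigma^{1,0}\A_0)^{\oplus g} \oplus \Sigma^{2,2}\M$. Then I would apply Lemma \ref{attach_antitube11} to conclude
\[
\H^{*,*}(X) \cong (\Sigma^{1,0}\A_0)^{\oplus g} \oplus \Sigma^{2,2}\M \oplus (\Sigma^{1,1}\M)^{\oplus 2n}.
\]
Since $S^{2,2}\#_2 T_g$ has $F=2$, $C=0$, and $\beta = 2g$, Lemma \ref{surgeffects}(ii) tracks $F = 2n+2$ and $\beta = 2g + 2n$ for $X$. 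Substituting yields the claimed form in terms of $F$ and $\beta$.

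For case (ii), the classification gives three sub-cases: $X \cong (S^2_a\#_2 T_g) + n[S^{1,0}\text{-}AT]$ (with $n \geq 1$, since otherwise the action is free), $X \cong (T_1^{anti}\#_2 T_g) + n[S^{1,0}\text{-}AT]$ (with $n \geq 1$), or $X \cong (S^{2,1}\#_2 T_g) + n[S^{1,0}\text{-}AT]$ (with $n \geq 0$). In the first two sub-cases I would apply Lemma \ref{orientcomp2} to handle the base surface together with one $S^{1,0}$-antitube, then apply Lemma \ref{attach_antitube10} to attach the remaining $n-1$ antitubes. In the third sub-case I would apply Lemma \ref{orientcomp1} with $\epsilon=1$ and then Lemma \ref{attach_antitube10} for the $n$ antitubes. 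In each sub-case one obtains a summand $\Sigma^{2,1}\M$, the expected $(\Sigma^{1,0}\M)^{\oplus C-1} \oplus (\Sigma^{1,1}\M)^{\oplus C-1}$ from the antitubes, and a number of $\Sigma^{1,0}\A_0$ summands that needs to be verified.

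The only real task is the bookkeeping of the invariants in case (ii). Using Lemma \ref{surgeffects}(i), each $S^{1,0}$-antitube increases $\beta$ by $2$ and $C$ by $1$, while leaving $F$ and $C_-$ unchanged. For the $S^2_a\#_2 T_g$ family one has initial $\beta = 4g$, $C=0$; after one antitube, $\beta=4g+2$, $C=1$, matching the count $(\beta(X)-2C(X))/2 + 1 = 2g+1$ of $\Sigma^{1,0}\A_0$-summands produced by Lemma \ref{orientcomp2}. For the $T_1^{anti}\#_2 T_g$ family, the initial $\beta$ is $4g+2$, leading after one antitube to $2g+2$ copies of $\Sigma^{1,0}\A_0$, which again matches. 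For $(S^{2,1}\#_2 T_g) + n[S^{1,0}\text{-}AT]$, Lemma \ref{orientcomp1} gives $g$ copies of $\Sigma^{1,0}\A_0$, and with $C = n+1$, $\beta = 2g+2n$ one checks $(\beta-2C)/2 + 1 = g$. In every sub-case the number of $\Sigma^{1,0}\A_0$-summands and the counts of $\Sigma^{1,0}\M$ and $\Sigma^{1,1}\M$ summands agree with the stated formula, completing the proof. There is no serious obstacle here beyond ensuring the invariant-tracking is done carefully across all three sub-cases; the heavy lifting is done by the earlier lemmas.
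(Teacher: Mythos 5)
Your plan is exactly the paper's proof (which is stated in one sentence there): combine the classification of Theorem \ref{nonfreeclasstorus} with Lemmas \ref{orientcomp1}, \ref{orientcomp2}, \ref{attach_antitube10}, \ref{attach_antitube11}, and \ref{surgeffects}. The structure of your argument is right, but your bookkeeping of $\beta$ is incorrect and internally inconsistent in a way you should fix before this is a complete proof.

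The underlying space of $S^{2,\epsilon}\#_2 T_g$ is $T_g \# S^2 \# T_g \cong T_{2g}$, so $\beta(S^{2,\epsilon}\#_2 T_g) = 4g$, not $2g$. You correctly use $\beta=4g$ for $S^2_a\#_2 T_g$ (same underlying space $T_{2g}$) in your discussion of Lemma \ref{orientcomp2}, yet write $\beta=2g$ for $S^{2,2}\#_2 T_g$ in case (i) and implicitly assume $\beta(S^{2,1}\#_2 T_g)=2g$ in sub-case three of case (ii). Correspondingly, Lemma \ref{orientcomp1} actually produces $2g$ copies of $\Sigma^{1,0}\A_0$, not $g$: the cofiber in its proof is $C_{2+}\wedge T_g$, whose cohomology contributes $\dim_{\Z/2} H^1_{sing}(T_g;\Z/2) = 2g$ shifted copies of $\A_0$ by Lemma \ref{product}, and Figure \ref{fig:s21tg} is labeled ``$2g$''; the ``$g$'' in the lemma's displayed formula is a typo that you have inherited. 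Your two errors cancel, so the final formulas come out right, but as written your case (i) and sub-case three reasoning does not stand on its own. With $\beta(S^{2,\epsilon}\#_2 T_g)=4g$ and $2g$ copies of $\Sigma^{1,0}\A_0$ from Lemma \ref{orientcomp1}, every count matches and the proof is complete.
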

\begin{proof} This follows from the classification given in Theorem \ref{nonfreeclasstorus}, the way surgery affects $F$, $C$, and $\beta$ as described in Lemma \ref{surgeffects}, and the computations done in Lemmas \ref{attach_antitube10}, \ref{attach_antitube11}, \ref{orientcomp1}, and \ref{orientcomp2}.
\end{proof}

\subsection{Nonorientable surfaces}
We finish with $C_2$-surfaces whose action is nonfree and nontrivial, and whose underlying space is nonorientable. We start with surfaces whose fixed set only contains isolated fixed points. 

\begin{lemma}\label{onlyFnonor}
Suppose $X$ is a nonfree $C_2$-surface whose underlying space is nonorientable and whose fixed set consists only of isolated points. Then
\[
\H^{*,*}(X)\cong \left(\Sigma^{1,1}\M \right)^{F-2} \oplus \left( \Sigma^{1,0}\A_0\right)^{\frac{\beta-F}{2}} \oplus \Sigma^{1,1}\D_4.
\]
\end{lemma}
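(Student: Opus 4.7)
The plan is to reduce via the classification and surgery lemmas to a single base family, and then compute the base by a cofiber-sequence argument that parallels Lemma~\ref{x2ncohom}, with the extension problem being the main obstacle.

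\textbf{Step 1 (reduction to a base).} By Theorem~\ref{nonfreeclassnonorf}, $X\cong X_0+m[S^{1,1}-AT]$ where $X_0=S^{2,2}\#_2 N_r$ for some $r\ge 1$ and $m\ge 0$. Lemma~\ref{surgeffects}(ii) gives $F=2+2m$ and $\beta=2r+2m$, so $F-2=2m$ and $(\beta-F)/2=r-1$. Lemma~\ref{attach_antitube11} yields
\[\H^{*,*}(X)\cong \H^{*,*}(X_0)\oplus (\Sigma^{1,1}\M)^{\oplus 2m},\]
so it suffices to show $\H^{*,*}(X_0)\cong (\Sigma^{1,0}\A_0)^{\oplus r-1}\oplus \Sigma^{1,1}\D_4$.

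\textbf{Step 2 (base case $r=1$).} When $r=1$, the space $X_0=S^{2,2}\#_2\R P^2$ is equivariantly the Klein bottle $X_2$ from Section~\ref{sec:somemods}, so $\H^{*,*}(X_0)\cong \Sigma^{1,1}\D_4$ by Lemma~\ref{x2ncohom}.

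\textbf{Step 3 (cofiber sequence for $r\ge 1$).} Exactly mirroring the first cofiber sequence of the proof of Lemma~\ref{x2ncohom}, I would use
\[(C_2\times N_r')_+\hookrightarrow X_{0+}\to \tilde S^{2,2},\]
where $N_r'=N_r$ minus an open disk. Since $N_r'\simeq \bigvee_r S^1$, Lemma~\ref{product} gives $\H^{*,*}((C_2\times N_r')_+)\cong \A_0\oplus (\Sigma^{1,0}\A_0)^{\oplus r}$, and Lemma~\ref{pinched} gives $\H^{*,*}(\tilde S^{2,2})\cong \Sigma^{2,2}\M\oplus \Sigma^{1,1}\M$. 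The quotient lemma applied to $X_0/C_2\cong N_r$ identifies $H^{p,0}(X_0)\cong H^p_{sing}(N_r;\Z)$, which is $\Z,\Z^{r-1},\Z/2$ for $p=0,1,2$; tracing this through the long exact sequence forces $d^{1,0}\colon \Z^{\oplus r}\twoheadrightarrow 2\Z\subset \Z$, and the $\M$-module structure propagates to pin down $d$ throughout.

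\textbf{Step 4 (extension problem --- the main obstacle).} Taking $\ker d$ and $\coker d$ yields the associated graded of $\H^{*,*}(X_0)$; the essential work is solving the abelian-group and $\rho$-extensions in topological degree $2$, in particular producing the $\Z/4$-tower that characterizes $\D_4$ rather than $\Z/2\oplus\Z/2$. I would resolve this by combining two ingredients. First, the forgetful long exact sequence (Lemma~\ref{fles}) together with $H^*_{sing}(X_0)=H^*_{sing}(N_{2r})$ fixes the $\rho$-connections between topological degrees $1$ and $2$. Second, the equivariant collapse map $q\colon X_0\twoheadrightarrow X_2=S^{2,2}\#_2 \R P^2$, obtained by crushing $r-1$ conjugate pairs of $\R P^2$-subsurfaces inside $C_2\times N_r'$ down to conjugate points, induces $q^*$ that transports the known $\Z/4$-tower of $\Sigma^{1,1}\D_4$ (from Step~2) into $\H^{*,*}(X_0)$; this forces the $\Sigma^{1,1}\D_4$ summand. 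The remaining $r-1$ kernel classes at bidegrees $(1,q)$ then assemble into $(\Sigma^{1,0}\A_0)^{\oplus r-1}$, completing the identification. The hardest part is Step~4: ensuring that the extension gives $\Z/4$ rather than $\Z/2\oplus \Z/2$, which is exactly what the comparison with the base case via $q^*$ is designed to guarantee.
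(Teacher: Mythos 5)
Your Steps 1--3 reproduce the paper's argument: the reduction to $X_0 = S^{2,2}\#_2 N_r$ via Theorem~\ref{nonfreeclassnonorf}, Lemma~\ref{surgeffects}, and Lemma~\ref{attach_antitube11}, and the use of the first cofiber sequence $(C_2\times N_r')_+\hookrightarrow X_{0+}\to\tilde S^{2,2}$ to pin down $d$ and read off $\H^{2,2k}(X_0)\cong\Z/4$, $\H^{2,2k+1}(X_0)=0$, are exactly what the paper does.

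The divergence is in Step 4, and here your approach is genuinely different from (and less complete than) the paper's. The paper does \emph{not} use a collapse map; instead it modifies the \emph{second} cofiber sequence of Lemma~\ref{x2ncohom} to $S^{1,1}\hookrightarrow S^{2,2}\#_2 N_r \to C_{2+}\wedge N_r$, observes that the $(r-1)$ additional $\Sigma^{1,0}\A_0$ summands from $H^1_{sing}(N_r)$ are inert for the differential and extension, and then solves the resulting extension via the forgetful LES and an explicit analysis of the $\mu$- and $\rho$-actions. The advantage of this sequence is that the domain $\H^{*,*}(S^{1,1})$ is free, which keeps the extension bookkeeping manageable.

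Your collapse map $q\colon X_0\twoheadrightarrow X_2$ is a clever alternative idea, but as stated it leaves real gaps. First, you need $q^*$ to be injective as a map of $\M$-modules in the relevant bidegrees; this would require a diagram chase comparing the two first cofiber sequences (for $X_0$ and $X_2$) along the induced map $(C_2\times N_r')\to (C_2\times(\R P^2)')$, and is not spelled out. Second, even granting an injection $\Sigma^{1,1}\D_4\hookrightarrow\H^{*,*}(X_0)$, it does not automatically split off as a direct summand; something must be said about why the extension by the complement is trivial. Third, the assertion that ``the remaining $r-1$ kernel classes at bidegrees $(1,q)$ assemble into $(\Sigma^{1,0}\A_0)^{\oplus r-1}$'' is unjustified: the kernel and cokernel of the first-sequence differential involve both the $\A_0$ and the $(\Sigma^{1,0}\A_0)^{\oplus r}$ summands of the non-free domain, and sorting out which classes survive and how they extend is precisely what makes the first cofiber sequence harder to finish alone. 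Filling these gaps would essentially reproduce the work the paper offloads to its second cofiber sequence.
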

\begin{proof} From the classification in Theorem \ref{nonfreeclassnonorf}, it must be that
\[X\cong (S^{2,2}\#_2 N_r) +m[S^{1,1}-AT]\]
for some $r>0$ and $m\geq 0$. Recall in Theorem \ref{x2ncohom} we computed the cohomology of a $C_2$-CW complex $X_2$ that is homeomorphic to $S^{2,2}\#_2 \R P^2$. The computation done in that theorem can easily be modified for $S^{2,2}\#_2 N_r$ when $r>1$ as follows. 

The first cofiber sequence \ref{eq:x2ncofib1} can be modified to
\[
(C_2\times N_r')_+ \hookrightarrow S^{2,2}\#_2 N_r \to \tilde{S}^{2,2},
\]
and the long exact sequence in Figure \ref{fig:d4n_cof1} will then have $r$-summands of the form $\Sigma^{1,0}\A_0$. As in the case $r=1$, we can conclude $\H^{2,2k}(S^{2,2}\#_2 N_r)\cong \Z/4$ and $\H^{2,2k+1}(S^{2,2}\#_2 N_r)=0$ for $k\geq 1$ from this sequence. The second cofiber sequence \ref{eq:x2ncofib2} can be modified to
\[
S^{1,1}\hookrightarrow S^{2,2}\#_2 N_r \to C_{2+}\wedge N_r.
\]
Then in Figure \ref{fig:d4n_cof2}, there will be $(r-1)$ summands of the form $\Sigma^{1,0}\A_0$, but these new summands will not change the differentials or the extension problem. Thus
\[
\H^{*,*}(S^{2,2}\#_2 N_r) \cong \left(\Sigma^{1,0}\A_0 \right)^{\oplus r-1} \oplus \Sigma^{1,1}\D_4.
\]

Lastly, the $C_2$-surface $S^{2,2}\#_2 N_r$ has a fixed set that only contains isolated fixed points, so from Lemma \ref{attach_antitube11},
\[
\H^{*,*}(S^{2,2}\#_2 N_r +m[S^{1,1}-AT]) \cong \H^{*,*}(S^{2,2}\#_2 N_r) \oplus \left( \Sigma^{1,1}\M \right)^{\oplus 2m}.
\]
Observe $\beta(X)=2m+2r$ and $F(X)=2m+2$, so $F-2=2m$ and $\frac{\beta-F}{2}=r-1$. This completes the proof. 
\end{proof}

Now we consider the various ways to introduce fixed circles using surgery. 

\begin{lemma}\label{free_10tube}
Let $Y$ be a free $C_2$-surface such that $Y+[S^{1,0}-AT]$ is a $C_2$-surface whose underlying space is nonorientable. Then 
\[\H^{*,*}(Y+[S^{1,0}-AT])\cong (\Sigma^{1,0}\A_0)^{\oplus \beta(Y)/2} \oplus \Sigma^{1,0} \D_4.\] 
\end{lemma}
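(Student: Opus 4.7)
The plan is to follow the same template as Lemma~\ref{x2ncohom}. First I would form the cofiber sequence
\[
S^{1,0} \hookrightarrow X \to \tilde{Y},
\]
where $S^{1,0}$ is included as the fixed circle $S^{1,0}\times\{0\}$ of the attached antitube. Collapsing this circle turns the antitube $S^{1,0}\times D(\R^{1,1})$ into two disks joined at their common apex, and re-gluing to $Y-(D\sqcup \sigma D)$ recovers $Y$ with two conjugate points identified, which is exactly the pinched space $\tilde{Y}=\cof(C_2\hookrightarrow Y)$.

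Next I would compute $\H^{*,*}(\tilde{Y})$ from the pinch cofiber sequence $C_{2+}\to Y_+\to \tilde{Y}$, using that $\H^{*,*}(Y)$ is already known by Theorem~\ref{freeanswer}. Across the three cases for $Y$ from Theorem~\ref{freeclass} (with the orientability constraint on $X$ restricting which cases actually arise), the map $\H^{*,*}(C_{2+})\cong \A_0 \to \H^{*,*}(Y_+)$ is controlled by the retraction onto a free orbit summand, and the resulting $\H^{*,*}(\tilde{Y})$ contains a suspension of an $\A$-type summand together with $\beta(Y)/2$ copies of $\Sigma^{1,0}\A_0$ that will pass through to $X$ untouched.

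With $\H^{*,*}(\tilde{Y})$ in hand, I would run the long exact sequence of the main cofiber sequence and determine the differential $d\colon \H^{*,*}(S^{1,0}) \to \H^{*+1,*}(\tilde{Y})$. The fixed set of $X$ consists of a single codimension-one circle, so Lemma~\ref{miscmani} gives $\H^{2,0}(X)=0$, which forces $d^{1,0}$ to surject onto the appropriate $\Z$-summand in $\H^{2,0}(\tilde{Y})$. Since $\H^{*,*}(S^{1,0})$ is free, the $\M$-module structure propagates this information to determine $d$ in all bidegrees, and the $\Sigma^{1,0}\A_0$ summands split off from the kernel as claimed.

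Finally I would solve the remaining extension problem on the $(\Sigma^{1,0}\D_4)$-summand exactly as in Lemma~\ref{x2ncohom}: the quotient lemma pins down $\H^{*,0}(X)$, and the forgetful long exact sequence, together with $\H^{2}_{sing}(X)\cong \Z/2$, forces a nontrivial abelian-group extension in odd weights producing a $\Z/4$ in bidegrees $(2,2k+1)$; then a routine diagram chase determines the $\rho$-, $x$-, and $\theta/x^j$-actions. The main obstacle is precisely the $\Z/4$ versus $(\Z/2)^{\oplus 2}$ dichotomy in topological degree $2$, which is the content of Proposition~\ref{nonortopm}: the $\Z/4$ answer is the ``nice fixed set'' alternative and is consistent with $X$ having a single two-sided fixed circle (constant-dimension fixed set, orientable normal bundle), so this is the side of the dichotomy that occurs here.
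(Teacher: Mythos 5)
Your outline matches the paper's proof through the choice of cofiber sequence, the computation of $\H^{*,*}(\tilde{Y})$ via the pinch sequence, and the use of Lemma~\ref{miscmani} to force $d^{1,0}$ to be surjective. But there is a genuine gap at the final step, the one that actually produces $\D_4$ rather than a split answer. The forgetful long exact sequence at weight $2k+1$ reads
\[
H^{1,2k}(X)\xrightarrow{\;\rho\;}H^{2,2k+1}(X)\xrightarrow{\;\psi\;}\Z/2\longrightarrow H^{2,2k}(X),
\]
and both $\Z/4$ and $\Z/2\oplus\Z/2$ for $H^{2,2k+1}(X)$ are compatible with $\psi$ being surjective with $2$-torsion kernel in the image of $\rho$. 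So the forgetful sequence plus the quotient lemma do not distinguish the two extensions. You then fall back on Proposition~\ref{nonortopm} and the ``nice fixed set implies $\Z/4$'' interpretation, but this reasoning is circular: Proposition~\ref{nonortopm} only establishes the dichotomy $\langle q^*\alpha\rangle\cong\Z/4$ or $\Z/2$, without saying which occurs, and the identification of the $\Z/4$ alternative with a constant-dimensional fixed set with orientable normal bundles is a \emph{consequence} of the computations in Theorem~\ref{nonfreenonoranswer} (as the paper's remark explicitly notes), not an independent fact available as input here.

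The missing ingredient is the $\underline{\Z/2}$-coefficient computation from \cite[Theorem 6.6]{Haz20}, which gives $\H^{2,2k+1}(Y+[S^{1,0}\text{-}AT];\underline{\Z/2})\cong\Z/2$. Feeding this into the long exact sequence induced by $0\to\uZ\xrightarrow{2}\uZ\to\underline{\Z/2}\to 0$ rules out $H^{2,2k+1}(X;\uZ)\cong\Z/2\oplus\Z/2$ (which would force $\H^{2,2k+1}(X;\underline{\Z/2})$ to have rank at least two), leaving only the nontrivial extension $\Z/4$. The paper also uses the $\underline{\Z/2}$ computation to pin down the $\rho^2$-connections in negative weights. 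Separately, a small imprecision: there is no retraction of the connected free surface $Y$ onto a free orbit, so the map $\A_0\to\H^{*,*}(Y_+)$ you describe does not exist; the paper instead establishes directly that $d^{0,0}$ in the pinch sequence is an isomorphism (via connectedness and the torsion-freeness of $H^1_{sing}$ of the quotient) and that $\H^{1,1}(\tilde{Y})$ is torsion-free (via the forgetful map into $\H^1_{sing}(\tilde{Y})$).
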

\begin{proof}
There are two cases: either $Y$ is nonorientable, or $Y$ is orientable and the action on $Y$ is orientation preserving (if it was orientation reversing, then $Y+[S^{1,0}-AT]$ would be orientable). In both cases, we make use of the cofiber sequence \[S^{1,0}\hookrightarrow Y +[S^{1,0}-AT] \to \tilde{Y},\] where $\tilde{Y}$ is the usual cofiber of $C_2\hookrightarrow Y$. To use this cofiber sequence, we need to know the cohomology of $\tilde{Y}$. We start by proving two general facts about the differential appearing in the cofiber sequence $Y_+\to \tilde{Y} \to \Sigma^{1,0}C_{2+}$. 

First, we show the differential $d^{0,0}:\H^{0,0}(Y_+)\to \H^{1,0}(\Sigma^{1,0}C_{2+})$ is an isomorphism. Note $\H^{*,*}(\Sigma^{1,0}C_{2+})\cong \Sigma^{1,0} \A_0$, so in particular $\H^{0,*}(\Sigma^{1,0}C_{2+})=0$ and $\H^{1,*}(\Sigma^{1,0}C_{2+})=\Z$. Thus $\H^{0,0}(\tilde{Y})\cong \ker (d^{0,0})$. The space $\tilde{Y}$ is connected, so $\H^{0,0}(\tilde{Y})=0$ which implies $\ker(d^{0,0})=0$. Thus $d^{0,0}$ is an injective map from $\Z$ to $\Z$. We have a short exact sequence 
\[0\to \coker(d^{0,0})\to \H^{1,0}(\tilde{Y})\to \ker(d^{1,0})\to 0,\] 
and by the quotient lemma $\H^{1,0}(\tilde{Y})\cong \H^{1}_{sing}(\tilde{Y}/C_2)$. This is torsion free because the first singular cohomology group is always torsion free (one can see this from the universal coefficient theorem). Thus $\coker(d^{0,0})$ cannot have any torsion, so the injective map $d^{0,0}:\Z \to \Z$ must be an isomorphism.

Next, we note $\H^{1,1}(\tilde{Y})$ is torsion free. To see this, recall $\H^{0,0}(\tilde{Y})=0$ and so the forgetful long exact sequence implies the forgetful map $\H^{1,1}(\tilde{Y}) \to \H^1_{sing}(\tilde{Y})$ is injective. The group $\H^1_{sing}(\tilde{Y})$ is torsion free, and hence $\H^{1,1}(\tilde{Y})$ is torsion free.

The cohomology of $\tilde{Y}$ is readily computed using these two facts. We outline the arguments and then leave the details to the reader. In the case that $Y$ is nonorientable, we have that $Y=S^2_a\#_2 Z$ or $Y=T_1^{anti}\#_2 Z$ for some nonorientable surface $Z$. In either case, we can use the answer for $H^{*,*}(Y)$ given in Theorem \ref{freeanswer}, and then the two facts above to get that $$\H^{*,*}(\tilde{Y})\cong \Sigma^{1,0}F_1 \oplus \Sigma^{1,1}F_1\oplus \left( \Sigma^{1,0}\A_0\right)^{\oplus \beta(Y)/2-1}.$$ In the case that $Y$ is orientable, we have that $Y\cong T_1^{rot}\#_2 T_g$ for some $g$, and get $$\H^{*,*}(\tilde{Y})\cong \Sigma^{1,0}\A_1 \oplus \left(\Sigma^{1,0} \A_0 \right)^{\oplus \beta(Y)/2}.$$ 

We can now compute the cohomology of $Y+[S^{1,0}-AT]$. There are two cases for the differential in the sequence associated to 
\[S^{1,0}\hookrightarrow Y +[S^{1,0}-AT] \hookrightarrow \tilde{Y}\] based on the orientability of $Y$; see Figures \ref{fig:10nonor} and \ref{fig:10or}. Note we have omitted the $(\Sigma^{1,0}\A_0)$-summands from both figures. In both cases, the differential has to be surjective because $H^{2,0}(Y+[S^{1,0}-AT])= 0$ by Lemma \ref{miscmani}. Thus we just have to solve the extension problem shown in the middle.

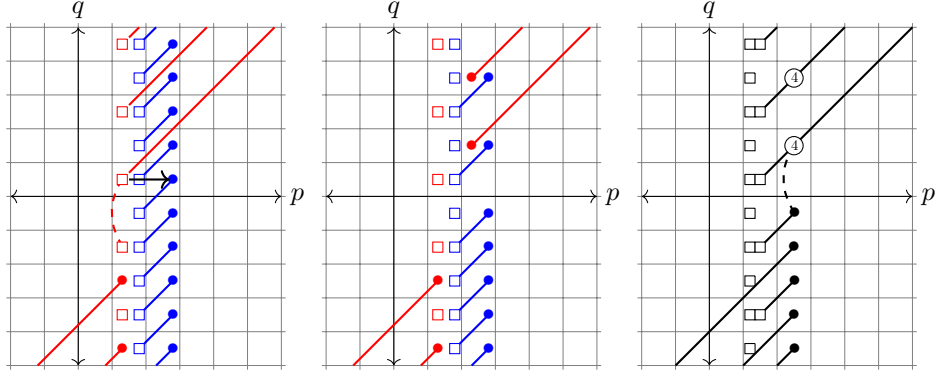
\begin{figure}[ht]
\begin{tikzpicture}[scale=0.45]
\draw[help lines,gray] (-2.125,-5.125) grid (6.125, 5.125);
\draw[<->] (-2,0)--(6,0)node[right]{$p$};
\draw[<->] (0,-5)--(0,5)node[above]{$q$};
\foreach \y in{-5,...,3}
	\draw[blue,thick] (1.8,\y+0.5)--(2.8,\y+1.5);
\draw[blue, thick] (2.3,-5)--(2.8,-4.5);
\draw[blue, thick] (1.8,4.5)--(2.3,5);
\zline{1.3}{blue};
\dotline{2.3}{blue};
\draw[->, thick] (1.5,0.5)--(2.7,0.5);
\zcone{0.8}{0}{red};
\end{tikzpicture}
\begin{tikzpicture}[scale=0.45]
\draw[help lines,gray] (-2.125,-5.125) grid (6.125, 5.125);
\draw[<->] (-2,0)--(6,0)node[right]{$p$};
\draw[<->] (0,-5)--(0,5)node[above]{$q$};
\foreach \y in{-5,-4,-3,-2,0,2}
	\draw[blue,thick] (1.8,\y+0.5)--(2.8,\y+1.5) node{\dot};
\draw[blue, thick] (2.3,-5)--(2.8,-4.5) node{\dot};
\zline{1.3}{blue};
\draw[red, thick] (2.3,1.5) node{\dot}--(5.8,5);
\draw[red, thick] (2.3,3.5) node{\dot}--(3.8,5);
\draw[red, thick] (-1.2,-5)--(1.3,-2.5)node{\dot};
\draw[red, thick] (0.8,-5)--(1.3,-4.5)node{\dot};
\foreach \x in {-4,-2,0,2,4}
	\zbox{0.8}{\x}{red};
\end{tikzpicture}
\begin{tikzpicture}[scale=0.45]
\draw[help lines,gray] (-2.125,-5.125) grid (6.125, 5.125);
\draw[<->] (-2,0)--(6,0)node[right]{$p$};
\draw[<->] (0,-5)--(0,5)node[above]{$q$};
\zline{.7}{black};
	\draw[thick,dashed] (2+1/2,1+1/2) arc (145:216:1.7);
	\foreach\y in {0,2,4}
	\draw[thick] (1.5,\y+.5)--(6-\y,5);
	\foreach\y in {-2,-4}
	\draw[thick] (1.5,\y+.5)--(2.5,\y+1.5);
	\draw[thick] (2.5,-4.5)--(2,-5);
	\foreach \y in {-4,-2,0,2,4}
	\zbox{1}{\y}{black};
	\foreach \y in {1,3}
	{\draw[fill=white](2.5,\y+.5) circle (.27cm);
	\draw (2.52,\y+.51) node{\scalebox{.55}{$4$}};
	};
	\foreach \y in {-1,-3,-5}
	\draw (2.52,\y+.51) node{\dot};
	\draw[thick] (2.5,-1.5)node{\dot}--(-1,-5);
	\draw[thick] (2.5,-3.5)node{\dot}--(1,-5);
\end{tikzpicture}
\caption{$d:\H^{*,*}(S^{1,0}) \to \H^{*,*}(\tilde{Y})$ when $Y$ is nonorientable.}
\label{fig:10nonor}
\end{figure}

\begin{figure}[ht]
\begin{tikzpicture}[scale=0.45]
\draw[help lines,gray] (-2.125,-5.125) grid (6.125, 5.125);
\draw[<->] (-2,0)--(6,0)node[right]{$p$};
\draw[<->] (0,-5)--(0,5)node[above]{$q$};
\zantizo{1.3}{blue};
\zcone{0.8}{0}{red};
\draw[->, thick] (1.5,0.5)--(2.7,0.5);
\end{tikzpicture}
\begin{tikzpicture}[scale=0.45]
\draw[help lines,gray] (-2.125,-5.125) grid (6.125, 5.125);
\draw[<->] (-2,0)--(6,0)node[right]{$p$};
\draw[<->] (0,-5)--(0,5)node[above]{$q$};
\draw[blue, thick] (1.8,4.5)--(2.3,5);
\foreach \y in{-4,-2,0,2}
	\draw[blue,thick] (1.8,\y+0.5)--(2.8,\y+1.5) node{\dot};
\foreach \y in {-4,-2,0,2,4}
	\zbox{1.3}{\y}{blue};
\draw[blue] (2.8,-1.5) node{\dot};
\draw[blue] (2.8,-3.5) node{\dot};
\draw[blue, thick] (2.3,-5)--(2.8,-4.5) node{\dot};
\draw[red, thick] (2.3,1.5) node{\dot}--(5.8,5);
\draw[red, thick] (2.3,3.5) node{\dot}--(3.8,5);
\draw[red, thick] (-1.2,-5)--(1.3,-2.5)node{\dot};
\draw[red, thick] (0.8,-5)--(1.3,-4.5)node{\dot};
\end{tikzpicture}
\begin{tikzpicture}[scale=0.45]
\draw[help lines,gray] (-2.125,-5.125) grid (6.125, 5.125);
\draw[<->] (-2,0)--(6,0)node[right]{$p$};
\draw[<->] (0,-5)--(0,5)node[above]{$q$};
	\draw[thick,dashed] (2+1/2,1+1/2) arc (145:216:1.7);
	\foreach\y in {0,2,4}
	\draw[thick] (1.5,\y+.5)--(6-\y,5);
	\foreach\y in {-2,-4}
	\draw[thick] (1.5,\y+.5)--(2.5,\y+1.5);
	\draw[thick] (2.5,-4.5)--(2,-5);
	\foreach \y in {-4,-2,0,2,4}
	\zbox{1}{\y}{black};
	\foreach \y in {1,3}
	{\draw[fill=white](2.5,\y+.5) circle (.27cm);
	\draw (2.52,\y+.51) node{\scalebox{.55}{$4$}};
	};
	\foreach \y in {-1,-3,-5}
	\draw (2.52,\y+.51) node{\dot};
	\draw[thick] (2.5,-1.5)node{\dot}--(-1,-5);
	\draw[thick] (2.5,-3.5)node{\dot}--(1,-5);
\end{tikzpicture}
\caption{$d:\H^{*,*}(S^{1,0}) \to \H^{*,*}(\tilde{Y})$ when $Y$ is orientable.}
\label{fig:10or}
\end{figure}
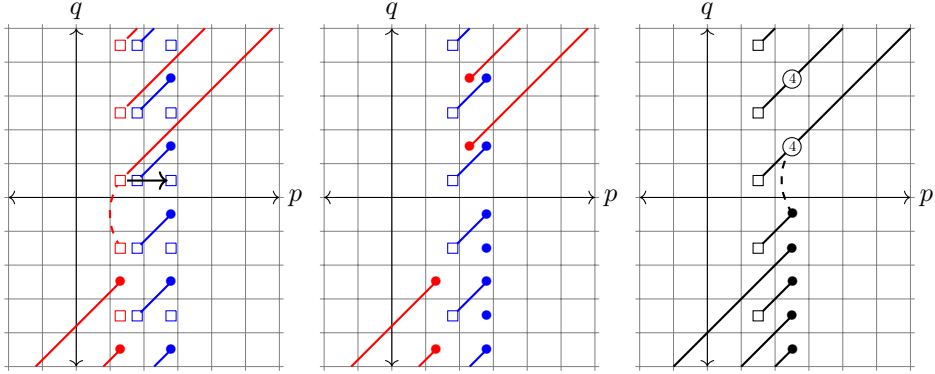

Let $k\geq 0$. We use the computation with $\underline{\Z/2}$-coefficients to see the abelian group extensions in topological degree $2$ are nontrivial. From \cite[Theorem 6.6]{Haz20}, \[\H^{2,2k+1}(Y+[S^{1,0}-AT];\underline{\Z/2})\cong \Z/2.\] Thus from the induced long exact sequence coming from the short exact sequence $\uZ\to \uZ\to \underline{\Z/2}$, we see that the extension in bidegrees $(2,2k+1)$ must be nontrivial. The cohomology with $\underline{\Z/2}$-coefficients also shows the classes in bidegrees $(0,-2k-4)$ map to the classes in bidegrees $(2,-2k-2)$ via $\rho^2$.

To finish the computation, note the actions of $x$ and $\theta$ can be determined from the actions on the cokernel and kernel. The actions of $\theta/x^j$ and $\mu/(x^i\rho^j)$ are then forced as in the proof of Theorem \ref{x2ncohom}. Lastly, a basis change, if needed, will yield the extension shown on the right in the figures. 
\end{proof}

\begin{lemma}\label{onlyf_10tube}
Let $Y$ be a nonfree $C_2$-surface whose fixed set only contains isolated fixed points. Then \[\H^{*,*}(Y+[S^{1,0}-AT])\cong \left(\Sigma^{1,1}\M \right)^{\oplus F(Y)-1} \oplus \left(\Sigma^{1,0}\A_0 \right)^{\oplus \frac{\beta(Y)-F(Y)}{2}+1} \oplus \Sigma^{2,1} \M_2.\]
\end{lemma}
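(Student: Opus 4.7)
The plan is to use the cofiber sequence
\[S^{1,0}\hookrightarrow X \to \tilde{Y}\]
obtained by including the fixed circle of the attached $S^{1,0}$-antitube into $X$. The quotient can be identified with $\tilde{Y}=\cof(C_2\hookrightarrow Y)$: collapsing the equator of the cylinder retracts each half of the antitube to a disk, so the attachment reduces to filling in $Y$'s two conjugate boundary disks and identifying their centers. Since $Y$ is nontrivial and nonfree, Lemma \ref{pinched} gives $\tilde{Y}\simeq Y\vee S^{1,1}$, and so $\H^{*,*}(\tilde{Y})\cong \H^{*,*}(Y)\oplus \Sigma^{1,1}\M$ can be read off from Theorem \ref{nonfreeoranswerz}(i) (when $Y$ is orientable) or Lemma \ref{onlyFnonor} (when $Y$ is nonorientable).

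The $\M$-linear differential $d\colon \H^{*,*}(S^{1,0})\to\H^{*+1,*}(\tilde{Y})$ is determined by the image of the degree-$(1,0)$ generator $\iota$ of $\Sigma^{1,0}\M=\H^{*,*}(S^{1,0})$, namely by $d(\iota)\in\H^{2,0}(\tilde{Y})$. In both cases this target is concentrated in the ``distinguished'' summand of $\H^{*,*}(\tilde{Y})$---this is $\Sigma^{2,2}\M$ in the orientable case, where $\H^{2,0}=\Z$ is generated by the $\theta$-class, and $\Sigma^{1,1}\D_4$ in the nonorientable case, where $\H^{2,0}=\Z/2$. Because $X$ has a fixed circle of codimension $1$, Lemma \ref{miscmani} forces $\H^{2,0}(X)=0$, so $d^{1,0}$ is surjective and $d(\iota)$ must be a generator of $\H^{2,0}(\tilde{Y})$. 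The relations $x\theta=2$, $\rho\theta=0$, and $\mu\theta=0$ in $\M$ (and their analogues inside $\D_4$) then pin down the entire differential: it is multiplication by $2$ on the bidegrees $(1,2k)$ with $k\neq 0$ and zero everywhere else.

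The resulting short exact sequences
\[0\to \coker(d^{p-1,q}) \to \H^{p,q}(X) \to \ker(d^{p,q})\to 0\]
recover the underlying bigraded abelian groups of $\H^{*,*}(X)$, and a bidegree-by-bidegree inspection reproduces the abelian groups predicted by the claim. The main obstacle is the $\M$-module extension problem: the residue of the distinguished summand, together with the surviving kernel classes from the $p\geq 2$ lines of $\Sigma^{1,0}\M$, must be identified as a single summand $\Sigma^{2,1}\M_2$ rather than something like $\Sigma^{2,1}\D_4$ or a nontrivial extension thereof. The $\rho$- and $x$-actions on the new classes are dictated by $\M$-linearity of the connecting maps, so the question reduces to determining the order of the top class at bidegree $(2,1)$. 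By Proposition \ref{nonortopm} and the remark following it, the fixed set of $X$ has components of non-constant dimension (the original isolated fixed points together with the new fixed circle), which forces the pullback from the representation sphere $S^{2,1}$ to generate a $\Z/2$ rather than a $\Z/4$, and hence identifies the new summand as $\Sigma^{2,1}\M_2$. As a sanity check one can compare against the $\underline{\Z/2}$-coefficient computation of \cite[Theorem 6.6]{Haz20} using the Bockstein long exact sequence attached to $0\to \uZ\xrightarrow{2}\uZ\to\underline{\Z/2}\to 0$.
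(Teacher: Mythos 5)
Your setup is correct and follows the paper's approach: the cofiber sequence $S^{1,0}\hookrightarrow Y+[S^{1,0}-AT]\to\tilde{Y}$, the identification $\tilde{Y}\simeq Y\vee S^{1,1}$ via Lemma \ref{pinched}, and the use of Lemma \ref{miscmani} to force $d^{1,0}$ surjective all match the paper exactly. The gap is in how you resolve the extension problem. You invoke ``Proposition \ref{nonortopm} and the remark following it'' to conclude the top class generates $\Z/2$ rather than $\Z/4$. But Proposition \ref{nonortopm} only says the group is $\Z/4$ \emph{or} $\Z/2$, and the remark explicitly begins ``It will follow from Theorem \ref{nonfreenonoranswer} that both of these possibilities can be realized'' --- that is, the dichotomy described in the remark (constant-dimensional fixed sets give $\Z/4$, non-constant give $\Z/2$) is \emph{deduced from} Theorem \ref{nonfreenonoranswer}, whose proof relies on this very lemma. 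Using the remark here is circular.

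There is also a smaller issue: you assert that ``the $\rho$- and $x$-actions on the new classes are dictated by $\M$-linearity of the connecting maps, so the question reduces to determining the order of the top class at bidegree $(2,1)$.'' This isn't quite right. The $\M$-module structure on $\coker(d)$ and $\ker(d)$ is determined, but the extension $0\to\coker(d)\to\H^{*,*}(X)\to\ker(d)\to 0$ can introduce actions not visible on either end, and in fact $\H^{2,1}(X)\cong\Z/2$ is already forced at the group level (both $\coker(d^{1,1})$ and the relevant piece vanish, leaving only $\ker(d^{2,1})\cong\Z/2$), so the ``order of the top class at $(2,1)$'' was never in question. The substantive extension issues are the abelian-group extensions at $(2,2k)$ with $k\geq 1$ and, crucially, the $x$-action from $(2,-1)$ to $(2,1)$, which is what distinguishes $\Sigma^{2,1}\M_2$ from a shifted $\D_4$. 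The paper pins these down with the forgetful long exact sequence plus a degree argument (if $x\alpha\neq 0$ for $\alpha$ the class in bidegree $(2,-1)$, then $\rho x\alpha\neq 0$, hence $\rho\alpha\neq 0$ in $\H^{3,0}(X)$, which vanishes by the quotient lemma). Your ``sanity check'' via the Bockstein sequence for $\underline{\Z/2}$-coefficients and \cite[Theorem 6.6]{Haz20} is actually the right kind of non-circular input and should be promoted to the main argument, supplemented by the $x$-action degree argument.
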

\begin{proof}
We consider the usual cofiber sequence given by including the fixed circle contained in the antitube:
\[
S^{1,0} \hookrightarrow Y+[S^{1,0}-AT] \to \tilde{Y}.
\]
The $C_2$-surface $Y$ has a fixed point, and thus $\tilde{Y}\simeq Y \vee S^{1,1}$ by Lemma \ref{pinched}.

There are two cases for the differential based on the orientability of $Y$. If $Y$ is orientable, then by Theorem \ref{nonfreeoranswerz}
\[
\H^{*,*}(Y) \cong \left(\Sigma^{1,1}\M \right)^{F(Y)-2} \oplus \left( \Sigma^{1,0}\A_0\right)^{\frac{\beta(Y)-F(Y)}{2}+1} \oplus \Sigma^{2,2}\M.
\]
If $Y$ is nonorientable, then by Lemma \ref{onlyFnonor}
\[
\H^{*,*}(Y)\cong \left(\Sigma^{1,1}\M \right)^{F(Y)-2} \oplus \left( \Sigma^{1,0}\A_0\right)^{\frac{\beta(Y)-F(Y)}{2}} \oplus \Sigma^{1,1}\D_4.
\]
Recall $\H^{*,*}(\tilde{Y}) \cong \H^{*,*}(Y) \oplus \Sigma^{1,1}\M$. The differential $d: \H^{*,*}(S^{1,0}) \to \H^{*,*}(\tilde{Y})$ cannot interact with any of the $\M$- or $\A_0$-summands in topological degree one for degree reasons. Thus we have drawn the differentials in Figures \ref{fig:addingC_or} and \ref{fig:addingC_nonor} without these summands.

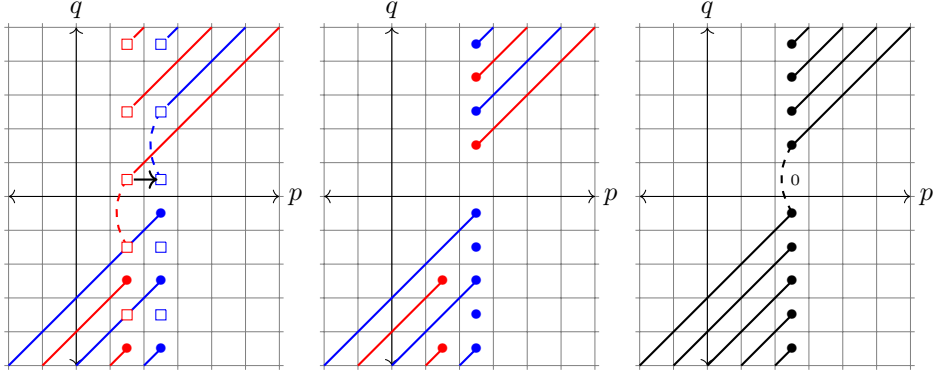
\begin{figure}[ht]
\begin{tikzpicture}[scale=0.45]
\draw[help lines,gray] (-2.125,-5.125) grid (6.125, 5.125);
\draw[<->] (-2,0)--(6,0)node[right]{$p$};
\draw[<->] (0,-5)--(0,5)node[above]{$q$};
\zcone{2}{2}{blue};
\zbox{2}{-4}{blue};
\draw[blue,thick] (2.5,-4.5) node{\dot}--(2,-5);
\zcone{1}{0}{red};
\draw[->, thick] (1.7,0.5)--(2.4,0.5);
\end{tikzpicture}
\begin{tikzpicture}[scale=0.45]
\draw[help lines,gray] (-2.125,-5.125) grid (6.125, 5.125);
\draw[<->] (-2,0)--(6,0)node[right]{$p$};
\draw[<->] (0,-5)--(0,5)node[above]{$q$};
\draw[red, thick] (2.5,1.5) node{\dot}--(6,5);
\draw[red, thick] (2.5,3.5) node{\dot}--(4,5);
\draw[red, thick] (1.5,-2.5) node{\dot}--(-1,-5);
\draw[red, thick] (1.5,-4.5) node{\dot}--(1,-5);
\draw[blue, thick] (2.5,2.5) node{\dot}--(5,5);
\draw[blue, thick] (2.5,4.5) node{\dot}--(3,5);
\draw[blue, thick] (2.5,-0.5) node{\dot}--(-2,-5);
\draw[blue, thick] (2.5,-2.5) node{\dot}--(0,-5);
\draw[blue, thick] (2.5,-4.5) node{\dot}--(2,-5);
\draw[blue] (2.5,-1.5) node{\dot};
\draw[blue] (2.5,-3.5) node{\dot};
\end{tikzpicture}
\begin{tikzpicture}[scale=0.45]
\draw[help lines,gray] (-2.125,-5.125) grid (6.125, 5.125);
\draw[<->] (-2,0)--(6,0)node[right]{$p$};
\draw[<->] (0,-5)--(0,5)node[above]{$q$};
\draw[thick] (2.5,1.5) node{\dot}--(6,5);
\draw[thick] (2.5,3.5) node{\dot}--(4,5);
\draw[thick] (2.5,-1.5) node{\dot}--(-1,-5);
\draw[thick] (2.5,-3.5) node{\dot}--(1,-5);
\draw[thick] (2.5,2.5) node{\dot}--(5,5);
\draw[thick] (2.5,4.5) node{\dot}--(3,5);
\draw[thick] (2.5,-0.5) node{\dot}--(-2,-5);
\draw[thick] (2.5,-2.5) node{\dot}--(0,-5);
\draw[thick] (2.5,-4.5) node{\dot}--(2,-5);
\draw[thick,dashed] (2+1/2,1+1/2) arc (145:216:1.7);
\draw (2.6,.5) node{\tiny{$0$}};
\end{tikzpicture}
\caption{$d:\H^{*,*}(S^{1,0}) \to \H^{*,*}(\tilde{Y})$ when $Y$ is orientable.}
\label{fig:addingC_or}
\end{figure}

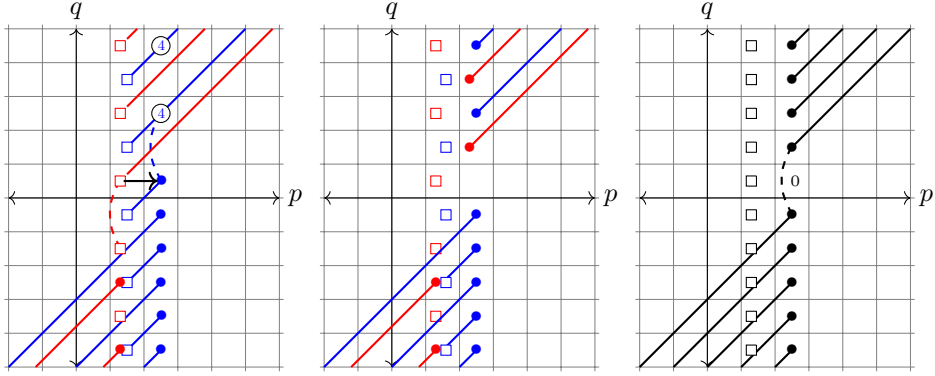
\begin{figure}[ht]
\begin{tikzpicture}[scale=0.45]
\draw[help lines,gray] (-2.125,-5.125) grid (6.125, 5.125);
\draw[<->] (-2,0)--(6,0)node[right]{$p$};
\draw[<->] (0,-5)--(0,5)node[above]{$q$};
	\draw[blue,thick,dashed] (2+1/2,2+1/2) arc (145:216:1.7);
	\foreach\y in {1,3}
	\draw[blue,thick] (1.5,\y+.5)--(6-\y,5);
	\foreach\y in {-1,-3,-5}
	\draw[blue, thick] (1.5,\y+.5)--(2.5,\y+1.5);
	\foreach \y in {-5,-3,-1,1,3}
	\zbox{1}{\y}{blue};
	\foreach \y in {2,4}
	{\draw[fill=white](2.5,\y+.5) circle (.27cm);
	\draw[blue] (2.52,\y+.51) node{\scalebox{.55}{$4$}};
	};
	\foreach \y in {0,-2,-4}
	\draw[blue] (2.52,\y+.51) node{\dot};
	\draw[blue, thick] (2.5,-0.5)node{\dot}--(-2,-5);
	\draw[blue, thick] (2.5,-2.5)node{\dot}--(0,-5);
	\draw[blue, thick] (2.5,-4.5)node{\dot}--(2,-5);
	\zcone{0.8}{0}{red};
	\draw[thick,->] (1.4,0.5)--(2.4,0.5);
\end{tikzpicture}
\begin{tikzpicture}[scale=0.45]
\draw[help lines,gray] (-2.125,-5.125) grid (6.125, 5.125);
\draw[<->] (-2,0)--(6,0)node[right]{$p$};
\draw[<->] (0,-5)--(0,5)node[above]{$q$};
\foreach \y in {-4,-2,0,2,4}
	\zbox{0.8}{\y}{red};
\draw[thick, blue] (1.5,-2.5)--(2.5,-1.5) node{\dot};
\draw[thick, blue] (1.5,-4.5)--(2.5,-3.5) node{\dot};
\foreach \y in {-5,-3,-1,1,3}
	\zbox{1.1}{\y}{blue};
\draw[thick, red] (2.3,1.5)node{\dot}--(5.8,5);
\draw[thick, red] (2.3,3.5)node{\dot}--(3.8,5);
\draw[thick, red] (1.3,-2.5)node{\dot}--(-1.2,-5);
\draw[thick, red] (1.3,-4.5)node{\dot}--(0.8,-5);
\draw[blue, thick] (2.5,-0.5)node{\dot}--(-2,-5);
\draw[blue, thick] (2.5,-2.5)node{\dot}--(0,-5);
\draw[blue, thick] (2.5,-4.5)node{\dot}--(2,-5);
\draw[thick, blue] (2.5,2.5) node{\dot}--(5,5);
\draw[thick, blue] (2.5,4.5) node{\dot}--(3,5);
\end{tikzpicture}
\begin{tikzpicture}[scale=0.45]
\draw[help lines,gray] (-2.125,-5.125) grid (6.125, 5.125);
\draw[<->] (-2,0)--(6,0)node[right]{$p$};
\draw[<->] (0,-5)--(0,5)node[above]{$q$};
\draw[thick] (2.5,1.5) node{\dot}--(6,5);
\draw[thick] (2.5,3.5) node{\dot}--(4,5);
\draw[thick] (2.5,-1.5) node{\dot}--(-1,-5);
\draw[thick] (2.5,-3.5) node{\dot}--(1,-5);
\draw[thick] (2.5,2.5) node{\dot}--(5,5);
\draw[thick] (2.5,4.5) node{\dot}--(3,5);
\draw[thick] (2.5,-0.5) node{\dot}--(-2,-5);
\draw[thick] (2.5,-2.5) node{\dot}--(0,-5);
\draw[thick] (2.5,-4.5) node{\dot}--(2,-5);
\draw[thick,dashed] (2+1/2,1+1/2) arc (145:216:1.7);
\draw (2.6,.5) node{\tiny{$0$}};
\zline{0.8}{black};
\end{tikzpicture}
\caption{$d:\H^{*,*}(S^{1,0}) \to \H^{*,*}(\tilde{Y})$ when $Y$ is nonorientable.}
\label{fig:addingC_nonor}
\end{figure}

The fixed set of $X$ contains a fixed circle, so $\H^{2,0}(X)=0$ by Lemma \ref{miscmani}. Thus $d^{1,0}$ must be surjective, which entirely determines the differential in both cases. It just remains to solve the extension problems shown in the middle of Figures \ref{fig:addingC_or} and \ref{fig:addingC_nonor}. One can use the forgetful long exact sequence to conclude the extension is solved by the picture on the right. Note the action by $x$ is 0 on the class $\alpha$ in bidegree $(2,-1)$ because otherwise $\rho x \alpha \neq 0$ which would imply $\rho \alpha \neq 0$, but this is not possible for degree reasons. Remembering that we omitted summands from these figures then yields the correct answer. 
\end{proof}

\begin{lemma}\label{onlyf_fm}
Let $Y$ be a $C_2$-surface whose fixed set only contains isolated fixed points. Then 
\[\H^{*,*}(Y+[FM[)\cong   \left(\Sigma^{1,1}\M \right)^{\oplus F(Y)-2} \oplus \left(\Sigma^{1,0}\A_0 \right)^{\oplus \frac{\beta(Y)-F(Y)}{2}+1} \oplus \Sigma^{2,1} \M_2.\]
\end{lemma}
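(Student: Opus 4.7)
The plan is to use the cofiber sequence
$$U\simeq S^{1,0} \hookrightarrow X = Y+[FM] \to X/U \simeq Y,$$
where $U$ is an equivariant neighborhood of the attached M\"obius band $M$. The left equivalence comes from equivariant deformation retraction of $M$ onto its core circle, and the right equivalence holds because collapsing $U$ recovers the isolated fixed point that the surgery replaced (the cone on $S^1_a = \partial M$ is exactly $D^{2,2}$). This parallels the cofiber sequence in Lemma \ref{attach_fm}, except now $\tilde H^{2,0}(Y) \neq 0$, so the differential $d\colon \tilde H^{*,*}(S^{1,0}) = \Sigma^{1,0}\M \to \tilde H^{*+1,*}(Y)$ is nontrivial and its determination is the heart of the computation.

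To pin down $d^{1,0}$, I would invoke Lemma \ref{miscmani}: since $X$ contains a fixed circle of codimension $1$, $\tilde H^{2,0}(X) = 0$, which forces $d^{1,0}$ to surject onto $\tilde H^{2,0}(Y)$. In the orientable case, Theorem \ref{nonfreeoranswerz}(i) shows $\tilde H^{2,0}(Y) \cong \Z$ arises solely from the $\Sigma^{2,2}\M$ summand (with generator $\theta$), so $d^{1,0}$ is an isomorphism; by $\M$-linearity, $d$ is multiplication by $\theta$ into $\Sigma^{2,2}\M$. In the nonorientable case, Lemma \ref{onlyFnonor} shows $\tilde H^{2,0}(Y) \cong \Z/2$ arises from the $\Sigma^{1,1}\D_4$ summand (the class $\theta\beta$ at $\D_4^{1,-1}$), so $d^{1,0}$ is reduction mod $2$ onto this class; again $\M$-linearity, combined with the relations $x\theta = 2$, $\rho\theta = 0$, and the defining relations of $\D_4$, propagates $d$ to every bidegree.

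One then computes $\ker(d)$ and $\coker(d)$ bidegree-by-bidegree. Because $\rho\theta=0$, every $\rho$-multiple in $\Sigma^{1,0}\M$ lies in $\ker(d)$, and the image of $\theta\cdot\colon \Sigma^{1,0}\M\to \Sigma^{2,2}\M$ (respectively into $\Sigma^{1,1}\D_4$) controls the cokernel. Assembling these via the short exact sequence $0\to\coker(d)\to\tilde H^{*,*}(X)\to\ker(d)\to 0$ produces a $\Sigma^{2,1}\M_2$ summand, accounting for both the kernel contributions at $\rho$-multiples and the cokernel contributions from the quotient, as well as an extra $\Sigma^{1,0}\A_0$ summand in the nonorientable case that matches the $+1$ in the exponent $(\beta-F)/2+1$ versus the $(\beta-F)/2$ appearing in $\tilde H^{*,*}(Y)$. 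The remaining summands $(\Sigma^{1,1}\M)^{F-2}$ and the bulk of the $(\Sigma^{1,0}\A_0)$ summands pass through the LES unchanged.

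Finally, one resolves the extension problem using the forgetful long exact sequence (Lemma \ref{fles}), the quotient lemma, and the $\underline{\Z/2}$-cohomology of $X$ from \cite{Haz20}, following the templates of Lemmas \ref{x2ncohom} and \ref{onlyf_10tube}. The hardest step is verifying that the kernel and cokernel contributions combine precisely to produce $\Sigma^{2,1}\M_2$ inside $\tilde H^{*,*}(X)$: the unit of $\Sigma^{2,1}\M_2$ at bidegree $(2,1)$ corresponds to $\rho\in \Sigma^{1,0}\M^{2,1}\subset \ker(d)$, while the classes involving $\tau\in\M_2^{0,1}$ (which do not lift to $\M$) must arise as Bockstein-type extensions pairing the kernel generators with cokernel classes, and in the nonorientable case the $\Z/4$ structure in $\D_4$ must be carefully tracked to confirm its precise reduction to $\M_2$.
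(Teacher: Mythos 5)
Your proposal is correct and takes essentially the same route as the paper: the same cofiber sequence $U\hookrightarrow Y+[FM]\to Y$ (with cofiber $Y$ rather than $\tilde Y$), the same use of Lemma \ref{miscmani} to pin down $d^{1,0}$, the same two-case split on orientability of $Y$, and the same extension-solving toolkit carried over from Lemma \ref{onlyf_10tube}. Your explicit identification of $d$ as multiplication by $\theta$ (landing in the $\Sigma^{2,2}\M$, resp.\ $\Sigma^{1,1}\D_4$, summand of $\H^{*,*}(Y)$) is a slightly more algebraic rendering of what the paper depicts pictorially, but the content is the same.
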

\begin{proof}
This is similar to the previous proof, so we just provide an outline. Let $U$ be a closed neighborhood of the attached M\"obius band in $Y+[FM]$ such that $U$ is homotopic to the M\"obius band. Crushing the neighborhood $U$ to a fixed point gives us a $C_2$-space homotopic to the original $C_2$-surface $Y$. Thus we have a cofiber sequence
\[
U\hookrightarrow Y+[FM] \to Y.
\]
The two cases for the cofiber sequences based on the orientability of $Y$ will follow just as in Lemma \ref{onlyf_10tube}. Note the only difference is that the cofiber is $Y$ instead of $\tilde{Y}$, which is why the final answer does not have an additional copy of $\Sigma^{1,1}\M$ as it did in Lemma \ref{onlyf_10tube}.
\end{proof}

We can now prove the final theorem.

\begin{theorem}\label{nonfreenonoranswer}
 Let $X$ be a nontrivial, nonfree $C_2$-surface whose underlying space is nonorientable. Then there are three cases for the cohomology of $X$ based on the fixed set:
 \begin{enumerate}
 \item[(i)] $F \neq 0 $, $C_+=C_-=0$. Then 
\[\tilde{H}^{*,*}(X) \cong \left(\Sigma^{1,1}\M\right)^{\oplus F-2} \oplus \left(\Sigma^{1,0}\A_0\right)^{\oplus \frac{\beta -F}{2}}\oplus\Sigma^{1,1}\D_4.\]
 \item[(ii)] $F =0 $, $C_+\neq0$, $C_-=0$. Then
 \[\tilde{H}^{*,*}(X) \cong \left(\Sigma^{1,0}\M\right)^{\oplus C-1}\oplus \left(\Sigma^{1,1}\M\right)^{\oplus C-1} \oplus \left(\Sigma^{1,0}\A_0\right)^{\oplus \frac{\beta -2C}{2}}\oplus \Sigma^{1,0}\D_4.\]
 \item[(iii)] $F \neq 0 $, $C_+\neq0$, $C_-=0$ or $F \geq 0 $, $C+\geq0$, $C_-\neq0$. Then 
 \[\tilde{H}^{*,*}(X) \cong  \left(\Sigma^{1,0}\M\right)^{\oplus C-1}\oplus \left(\Sigma^{1,1}\M\right)^{\oplus F+C-2} \oplus \left(\Sigma^{1,0}\A_0\right)^{\oplus \frac{\beta -(F+2C)}{2}+1}\oplus \Sigma^{2,1}\M_2.\]
 \end{enumerate}
\end{theorem}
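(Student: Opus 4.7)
The plan is to reduce $X$ by reverse surgery to a $C_2$-surface whose fixed set contains only isolated fixed points, compute its cohomology using results already established, and then rebuild $X$ one surgery at a time using the lemmas at the start of this section. Case~(i) is exactly Lemma~\ref{onlyFnonor}. For case~(ii), since $X^{C_2}$ consists of $C$ two-sided fixed circles, I would reverse the $S^{1,0}$-antitube attachments to obtain a free $C_2$-surface $Y$ with $X \cong Y + C[S^{1,0}\text{-}AT]$ and $\beta(Y) = \beta - 2C$. Apply Lemma~\ref{free_10tube} to the first antitube attachment (this is the step that produces the distinguished $\Sigma^{1,0}\D_4$ summand along with $(\Sigma^{1,0}\A_0)^{\oplus \beta(Y)/2}$), then iterate Lemma~\ref{attach_antitube10} for the remaining $C-1$ attachments, each contributing $\Sigma^{1,0}\M \oplus \Sigma^{1,1}\M$, and substitute $\beta(Y) = \beta - 2C$ to match the stated formula.

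For case~(iii), remove all fixed circles via reverse surgery: each one-sided circle by reverse FM-surgery (recovering an isolated fixed point) and each two-sided circle by reverse $S^{1,0}$-surgery. The resulting $C_2$-surface $Z$ has only isolated fixed points, with $F(Z) = F + C_-$ and $\beta(Z) = \beta - 2C_+ - C_-$. A parity check via Lemma~\ref{surgeffects} together with the starting points supplied by Theorems~\ref{nonfreeclasstorus} and~\ref{nonfreeclassnonorf} shows $F(Z)$ is even and hence $F(Z)\geq 2$, so either Lemma~\ref{onlyFnonor} (if $Z$ is nonorientable) or Theorem~\ref{nonfreeoranswerz}(i) (if $Z$ is orientable) computes $\H^{*,*}(Z)$. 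Next, introduce the first fixed circle via Lemma~\ref{onlyf_10tube} (if $C_+ \geq 1$) or Lemma~\ref{onlyf_fm} (if $C_+ = 0$, forcing $C_-\geq 1$); a direct substitution of invariants using Lemma~\ref{surgeffects} shows both lemmas yield the same $\M$-module $(\Sigma^{1,1}\M)^{F'-1} \oplus (\Sigma^{1,0}\A_0)^{(\beta'-F')/2} \oplus \Sigma^{2,1}\M_2$ in terms of the post-surgery invariants $F',\beta'$. Finally, attach the remaining surgeries: each additional $S^{1,0}$-antitube contributes $\Sigma^{1,0}\M \oplus \Sigma^{1,1}\M$ via Lemma~\ref{attach_antitube10}, and each additional FM-surgery contributes $\Sigma^{1,0}\M$ via Lemma~\ref{attach_fm} (both applicable because the surface now has a fixed circle and, for FM-surgery, still has an isolated fixed point). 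Using $C = C_+ + C_-$ together with the relations above produces the stated formula.

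The main obstacle is the consistency check in case~(iii): both "top" summands that can appear in $\H^{*,*}(Z)$—namely $\Sigma^{2,2}\M$ (for orientable $Z$) and $\Sigma^{1,1}\D_4$ (for nonorientable $Z$)—must transform into the same $\Sigma^{2,1}\M_2$ after the first circle-introducing surgery, with compensating adjustments to the $\Sigma^{1,0}\A_0$-count so that the resulting module is independent of whether the intermediate $Z$ was orientable. This is encoded in Lemmas~\ref{onlyf_10tube} and~\ref{onlyf_fm}, but carrying the invariant bookkeeping through the parity argument and the multiple possible orderings of subsequent surgeries requires care; once this coincidence is verified, the independence of the final answer from the chosen decomposition follows at once.
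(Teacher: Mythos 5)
Your overall strategy—peel off the fixed circles by reverse surgery, compute the cohomology of the reduced surface, then rebuild using Lemmas~\ref{free_10tube}, \ref{onlyf_10tube}, \ref{onlyf_fm}, \ref{attach_antitube10}, and \ref{attach_fm}—is the same as the paper's, and your consistency check in case~(iii) (that both possible first circle-producing surgeries yield $(\Sigma^{1,1}\M)^{F'-1}\oplus(\Sigma^{1,0}\A_0)^{(\beta'-F')/2}\oplus\Sigma^{2,1}\M_2$ in terms of post-surgery invariants) is correct and is exactly the coincidence the paper exploits via Lemmas~\ref{onlyf_10tube} and~\ref{onlyf_fm}.

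There is, however, a genuine gap in case~(ii). You assume that since $X^{C_2}$ consists of $C$ two-sided circles, one can reverse all $C$ attachments and obtain a connected free $C_2$-surface $Y$ with $X\cong Y+C[S^{1,0}\text{-}AT]$. This is false in general. Reversing an $S^{1,0}$-antitube attached along a two-sided circle $S$ only produces a connected $C_2$-surface when $X-S$ is connected; if $X-S$ is disconnected, its two components are swapped by the involution, so the filled-in space is $C_2\times W$ for some nonequivariant surface $W$ and in particular has empty fixed set. In case~(ii), where $F=0$ and $C_-=0$, nothing forces connectedness when the \emph{last} circle is removed: after removing $C-1$ circles (each removal here does preserve connectedness because the remaining fixed circles force it), you are left with a $C_2$-surface with a single two-sided fixed circle and no other fixed points, and removing that circle can disconnect. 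When it does, $X\cong S^{2,1}\#_2 N_r$, which is \emph{not} of the form $Y+[S^{1,0}\text{-}AT]$ for any connected free $Y$, so Lemma~\ref{free_10tube} cannot be invoked and your chain breaks at the base. The paper handles this subcase by a separate direct computation: one reruns the two cofiber sequences used in Lemma~\ref{onlyFnonor} for $S^{2,2}\#_2 N_r$ with the weights shifted down by one, obtaining $\H^{*,*}(S^{2,1}\#_2 N_r)\cong(\Sigma^{1,0}\A_0)^{\oplus r-1}\oplus\Sigma^{1,0}\D_4$. You need to add this branch to case~(ii); in case~(iii), by contrast, the hypotheses $F\neq 0$ or $C_-\neq 0$ guarantee a nonempty fixed set persists through every two-sided circle removal, so connectedness never fails and your argument is sound there.
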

\begin{proof}
Observe (i) follows immediately from Lemma \ref{onlyFnonor}. We break into three cases to prove (ii) and (iii).\medskip

\emph{Case 1: $F =0 $, $C_+\neq0$, $C_-=0$.} We induct on the number of fixed circles. The base case is $C=1$. Consider an equivariant tubular neighborhood of the fixed circle. The neighborhood must be an $S^{1,0}$-antitube since the circle is two-sided. As described in Section \ref{sec:backgroundsurg}, we can do equivariant surgery to remove this tube and then attach two conjugate disks in order to get a possibly disconnected surface with a free $C_2$-action. 

If the resulting surface is disconnected, then it must be isomorphic to $C_2\times N_r$. Hence $X\cong S^{2,1}\#_2 N_r$. Following the outline to compute $\H^{*,*}(S^{2,2}\#_2N_r)$ given in the proof of Lemma \ref{onlyFnonor} with everything shifted down in weight by one, we get
\[\H^{*,*}(X)\cong \left(\Sigma^{1,0}\A_0 \right)^{\oplus r-1} \oplus \Sigma^{1,0}\D_4.\] 
Note $\beta=2r$ and $C=1$, so $(\beta-2C)/2=r-1$, as desired. 

If the resulting surface is connected, then $X\cong Y+[S^{1,0}-AT]$ where $Y$ is the resulting free $C_2$-surface. From Lemma \ref{free_10tube}, 
\[\H^{*,*}(X)\cong \left(\Sigma^{1,0}\A_0\right)^{\oplus \beta(Y)/2}\oplus \Sigma^{1,0}\D_4.\] Observe $\beta(X)=\beta(Y)+2$ and $C(X)=1$, so $(\beta(X)-2C(X))/2=\beta(Y)/2$. This completes the base case.

For the inductive step, suppose $C\geq 2$. We can then do surgery around one of the two-sided fixed circles to see $X\cong Y + [S^{1,0}-AT]$ where $Y$ is a $C_2$-surface with $C-1$ fixed circles. Note $Y$ must be connected because $Y^{C_2}$ is nonempty. From Lemma \ref{attach_antitube10}, 
\[\H^{*,*}(X)\cong \H^{*,*}(Y) \oplus \Sigma^{1,0}\M \oplus \Sigma^{1,1}\M.\] Note $Y$ must be nonorientable because if it were orientable, then the action on $Y$ would have to be orientation reversing because $C(Y)\geq 1$. But then $Y+[S^{1,0}-AT]$ would also be orientable, which contradicts that $X$ is nonorientable. Thus we can use the inductive hypothesis to conclude
\[
\H^{*,*}(Y) \cong \left(\Sigma^{1,0}\A_0\right)^{\oplus \frac{\beta(Y) -2C(Y)}{2}}\oplus \left(\Sigma^{1,0}\M\right)^{\oplus C(Y)-1}\oplus \left(\Sigma^{1,1}\M\right)^{\oplus C(Y)-1} \oplus\Sigma^{1,0}\D_4.
\]
Plugging in $C(Y)=C(X)-1$ and $\beta(Y)=\beta(X)-2$, and then using that $\H^{*,*}(X)\cong \H^{*,*}(Y) \oplus \Sigma^{1,0}\M \oplus \Sigma^{1,1}\M$ completes this case.
\medskip

\emph{Case 2: $F \neq 0 $, $C_+\neq0$, $C_-=0$}. We again induct on $C$. If $C=1$, then we can do surgery as in Case 2 to get that $X\cong Y+[S^{1,0}-AT]$. Note $F(X)=F(Y)$, so it must be that $Y$ contains isolated fixed points. From Lemma \ref{onlyf_10tube}, $$\H^{*,*}(X) \cong   \left(\Sigma^{1,1}\M \right)^{\oplus F(Y)-1} \oplus \left(\Sigma^{1,0}\A_0 \right)^{\oplus \frac{\beta(Y)-F(Y)}{2}+1} \oplus \Sigma^{2,1}\M_2.$$
Plugging in $\beta(Y)=\beta(X)-2$, $F(X)=F(Y)$, and $C(X)=1$ then yields the desired answer.

For the inductive step, suppose $C\geq 2$. As in Case 2, do surgery around one of the fixed circles to get that $X\cong Y + [S^{1,0}-AT]$ where $Y$ is a $C_2$-surface with $C-1$ fixed circles such that $F(Y)\geq 1$. From Lemma \ref{attach_antitube10}, 
\[\H^{*,*}(X)\cong \H^{*,*}(Y) \oplus \Sigma^{1,0}\M \oplus \Sigma^{1,1}\M.\] 
 From the inductive hypothesis,
 \begin{align*}
 \tilde{H}^{*,*}(Y) \cong& \left(\Sigma^{1,0}\A_0\right)^{\oplus \frac{\beta(Y) -(F(Y)+2C(Y))}{2}+1}\oplus \left(\Sigma^{1,0}\M\right)^{\oplus C(Y)-1}\\
 &\oplus \left(\Sigma^{1,1}\M\right)^{\oplus F(Y)+C(Y)-2} \oplus\Sigma^{2,1}\M_2.
 \end{align*}
Observe $F(Y)=F(X)$, $\beta(Y)=\beta(X)-2$, $C(Y)=C(X)-1$. Substituting those values into the above will then give the desired answer.
\medskip

\emph{Case 3: $F \geq 0 $, $C+\geq0$, $C_-\neq0$. } We induct on $C_-$. For the base case, suppose $C_-=1$. We can do surgery to get that $X\cong Y+[FM]$ for a $C_2$-surface $Y$ where $F(Y)=F(X)+1$, $\beta(Y)=\beta(X)-1$, $C_-(Y)=0$, and $C_+(Y)=C_+(X)$. If $C_+(Y)=0$, then from Lemma \ref{onlyf_fm}
\[\H^{*,*}(X)\cong \left(\Sigma^{1,1} \M \right)^{\oplus F(Y)-2} \oplus \left(\Sigma^{1,0} \A_0 \right)^{\oplus \frac{\beta(Y)-F(Y)}{2}+1} \Sigma^{2,1}\M_2.\] 
Observe 
\[F(Y)-2=F(X)+1-2=F(X)+C(X)-2, \text{ and}\]  \[\beta(Y)-F(Y)=\beta(X)-F(X)-2=\beta(X)-(F(X)+2C(X)),\] so there are the correct amount of summands. If $C_+(Y)>0$, then \[\H^{*,*}(Y+[FM])\cong \H^{*,*}(Y)\oplus \Sigma^{1,0}\M\] by Lemma \ref{attach_fm}. Since $Y^{C_2}$ contains circles and isolated points, we can use Case 3 to get
 \begin{align*}
 \tilde{H}^{*,*}(Y) \cong& \left(\Sigma^{1,0}\A_0\right)^{\oplus \frac{\beta(Y) -(F(Y)+2C(Y))}{2}+1}\oplus \left(\Sigma^{1,0}\M\right)^{\oplus C(Y)-1}\\
 &\oplus \left(\Sigma^{1,1}\M\right)^{\oplus F(Y)+C(Y)-2} \oplus\Sigma^{2,1}\M_2.
 \end{align*}
Substituting the appropriate values then completes this case.

For the inductive step, we again do surgery to get that $X\cong Y+[FM]$, but now $C_-(Y)>0$, so we can use Lemma \ref{attach_fm} and the inductive hypothesis to complete the proof.
\end{proof}

\begin{rmk}\label{nonfreenoninv}
In \cite{Dug19} it was shown the list of values $\beta$, $F$, $C_+$, $C_-$ is not a complete invariant for $C_2$-surfaces. The $\M$-module structure of the cohomology only depends on these values, so it is also not a complete invariant. For example,  
\[ (S^{2}_a\#_2 \R P^2) +[S^{1,0}-AT] \not \cong T^{rot}_1 + [S^{1,0}-AT]\]
because the quotient spaces are not homeomorphic. But these $C_2$-surfaces have the same underlying space and fixed set, and thus as $\M$-modules
\[H^{*,*}( (S^{2}_a\#_2 \R P^2) +[S^{1,1}-AT] ) \cong H^{*,*}(T^{rot}_1 + [S^{1,0}-AT]).\]
\end{rmk}

\bibliography{mybib}
\bibliographystyle{plain}

\end{document}